\def\authorsPS{}
\newcommand\authPS[8]{\ifnum\authPSc<1\def\authPSc{1}\else, \fi{\large\sffamily #1 #2}%
\edef\authorsPS{\authorsPS \par\vskip 1mm\noindent #1 #2:\hskip 5mm #3, #4, #5, #6, #7, #8}}
\newtheorem{theorem}{Theorem}[section]
\newtheorem{lemma}[theorem]{Lemma}
\newtheorem{proposition}[theorem]{Proposition}
\theoremstyle{definition}
\newtheorem{definition}[theorem]{Definition}
\theoremstyle{remark}
\newtheorem{remark}[theorem]{Remark}
\def\authPSc{0}
\newcommand{\beq}[1]{%\marginpar{\footnotesize\sf #1}%
\begin{equation}\label{#1}}
\newcommand{\eeq}{\end{equation}}
\newcommand{\req}[1]{{\rm(\ref{#1})}}
\newcommand{\Ten}{\stackrel{\sim}{\otimes}}
\newcommand{\hten}{{\mathfrak H}}
\newcommand{\law}{\stackrel{\cal L}{\longrightarrow}}
\newcommand{\Nt}{\lfloor nt \rfloor}
\newcommand{\Btilde}{\widetilde{B}_{\frac jn}}
\begin{document}

% !!!!!!!!!!!!!!!!!!!!!!!!!!!!!!!!!!!!!!!!!!!!!!!!!!!!!!!!!!!!!!!!!!!!!!!
% !!!                   START HERE                                   !!!
% !!!!!!!!!!!!!!!!!!!!!!!!!!!!!!!!!!!!!!!!!!!!!!!!!!!!!!!!!!!!!!!!!!!!!!!

%% !!  FULL TITLE OF PAPER
% example :

\title{On Simpson's rule and fractional Brownian motion with H = 1/10}

\author{Daniel Harnett and David Nualart\thanks{
D. Nualart is supported by the NSF grant DMS1208625. \newline
  Keywords:  It\^o formula, Skorohod integral, Malliavin calculus, fractional Brownian motion.
  }   \\
   {\small \em Univ. of Wisconsin - Stevens Point and University of Kansas} }
\date{}
\maketitle

\begin{abstract}
We consider stochastic integration with respect to fractional Brownian motion (fBm) with $H < 1/2$.  The integral is constructed as the limit, where it exists, of a sequence of Riemann sums.  A theorem by Gradinaru, Nourdin, Russo \& Vallois (2005) holds that a sequence of Simpson's rule Riemann sums converges in probability for a sufficiently smooth integrand $f$ and when the stochastic process is fBm with $H > 1/10$.  For the case $H = 1/10$, we prove that the sequence of sums converges in distribution.  Consequently, we have an It\^o-like formula for the resulting stochastic integral.  The convergence in distribution follows from a Malliavin calculus theorem that first appeared in Nourdin and Nualart (2010).
\end{abstract}

\section{Introduction}

Let $B = \{ B^H_t, t\ge 0\}$ be a fractional Brownian motion (fBm), that is, $B$ is a centered Gaussian process with covariance given by
\beq{cov_fbm}{\mathbb E}\left[ B_s B_t\right]:=R(s,t) =\frac 12\left( s^{2H} + t^{2H} - |t-s|^{2H}\right),\eeq
for $s,t \ge 0$, where $H\in(0,1)$ is the Hurst parameter.  For a smooth function $f:{\mathbb R}\to {\mathbb R}$, we take the `Simpson's rule' Riemann sum with uniform partition,
\[ S^S_n(t) := \sum_{j=0}^{\Nt -1} \frac 16\left(f'(B_{\frac jn}) + 4 f'\left( (B_{\frac jn}+B_{\frac{j+1}{n}})/2\right)+f'(B_{\frac{j+1}{n}})\right)\left( B_{\frac{j+1}{n}} - B_{\frac jn}\right).\]
It can be shown (see \cite{GNRV}, or Section 3.1) that this sequence of sums converges in probability when $B$ is fBm with $H > 1/10$, but in general it does not converge in probability when $H \le 1/10$.  In this paper, we consider the particular case of $H = 1/10$, and show that $S^S_n(t)$ does converge weakly to a random variable.  More precisely, Theorem 3.3 shows that, conditioned on the path $ \{B_s, s\le t\}$, \beq{mainSSnt}  S^S_n(t) \law  f(B_t) - f(0)  + \frac{\beta}{2880}\int_0^t f^{(5)}(B_s) dW_s,\eeq
where $W_t$ is a standard Brownian motion, independent of $B$, and $\beta$ is a constant defined in Theorem 3.3.  This result allows us to write the change-of-variable formula
\beq{cov_110} f(B_t) \stackrel{\cal L}{=} f(0) + \int_0^t f'(B_s)d^SB_s - \frac{\beta}{2880}\int_0^t f^{(5)}(B_s) dW_s,\eeq
where the differential $d^SB_s$ denotes the limit of the Simpson's rule sum.

Conditional convergence in distribution follows from a central limit theorem given in Section 2 (Theorem 2.3).  This is a new version of a theorem that first appeared in Nourdin and Nualart (2010) \cite{NoNu}.  This theorem uses Malliavin calculus, and applies to a random vector with components in the form of Malliavin divergence integrals.  After proving Theorem 2.3, the main task in proving \req{cov_110} is to verify the conditions of Theorem 2.3, which are relatively long and technical.

\subsection{Background.}
Assuming a uniform partition, the classical Stratonovich stochastic integral is defined as 
\beq{Stratonov} \int_0^t f'(B_s)d^\circ B_s = \lim_{n\to\infty} S^T_n(t) := \lim_{n\to\infty} \sum_{j=0}^{\Nt -1} \frac 12\left( f'(B_{\frac jn})+f'(B_{\frac{j+1}{n}})\right)\left( B_{\frac{j+1}{n}} - B_{\frac jn}\right),\eeq
provided that limit exists.  It has been shown that this limit exists in probability when $B$ is a fBm with $H > 1/6$ , but does not, in general converge in probability for $H \le 1/6$ (see \cite{ChNu, GNRV, NoRevSwan}, also Section 3.1).  Subsequently, it was proved in \cite{NoRevSwan} that for $H=1/6$, \req{Stratonov} does converge in law to a random variable that includes a Wiener-It\^o integral, that is, as $n\to\infty$
\[S_n^T(t) \law f(B_t) - f(0) +\gamma \int_0^t f^{(3)} (B_s)dW_s,\]
where $\gamma$ is a known constant and $W$ is a standard Brownian motion, independent of $B$.  Hence, there is the change-of-variable formula
\beq{cov_16} f(B_t) \stackrel{\cal L}{=} f(0) + \int_0^t f'(B_s)d^\circ B_s -\gamma\int_0^t f^{(3)}(B_s)~dW_s.\eeq

The reader will recognize that \req{Stratonov} is the Riemann sum corresponding to the `Trapezoidal rule' of basic calculus.  It is certainly possible to generalize to other types of Riemann sums.  The `Midpoint' sum,
\[ \sum_{j=1}^{\left\lfloor \frac{nt}{2}\right\rfloor} f'(B_{\frac{2j-1}{n}})\left( B_{\frac{2j}{n}} - B_{\frac{2j-2}{n}}\right),\]
can be shown to converge in probability for fBm with $H > 1/4$ (see \cite{Swanson}).  The end point case $H=1/4$ was considered in papers by Burdzy and Swanson \cite{BuSw}, and Nourdin and R\'eveillac \cite{NoRev}.  These papers proved the change-of-variable formula
\beq{cov_14} f(B_t) \stackrel{\cal L}{=} f(0) +\int_0^t f'(B_s) d^\star B_s + \theta \int_0^t f''(B_s) dW_s,\eeq
where $\theta$ is a constant, $W$ is a scaled Brownian motion, independent of $B$, and the notation $d^\star B_s$ denotes the integral arising from the midpoint sum.

\subsection{Extensions.}
Following the results \req{cov_16} and \req{cov_14}, the present authors also wrote papers on the cases $H = 1/4$ and $H=1/6$ \cite{HaNu_14, HaNu_16}.  These papers contained alternate proofs of \req{cov_14} and \req{cov_16}, using Malliavin calculus and a version of Theorem 2.3.  An interesting difference in the present paper, is that the sum $S^S_n(t)$ converges conditionally to a random variable that is actually the sum of two, independent Gaussian random variables.  In the cases considered in \cite{HaNu_14, HaNu_16}, there was only a single random term.  In those prior papers, we also showed that the results could be extended to other Gaussian processes sufficiently similar to fBm, for example, bifractional Brownian motion with $HK=1/6$ in the case of \req{cov_16}.  It was also shown that the Midpoint and Trapezoidal Riemann sums converge as functions in the Skorohod space ${\mathbf D}[0,\infty)$, by proving that the sums converge in the sense of finite-dimensional distributions.  We expect that similar extensions could be applied to the present Theorem 3.3, but we have not pursued this in the present paper.

We also expect that the techniques of this paper could be applied to the `Milne's rule' sum for the case $H =1/14$, see Proposition 3.1.

\medskip
The organization of this paper is as follows:  in Section 2, we give a brief description of the Malliavin calculus definitions and identities that will be used.  We also discuss properties of fBm, and prove the central limit theorem which will be applied for the main result.  In Section 3, after a brief introduction we state and prove the main result, which is Theorem 3.3.  Finally, Section 4 contains proofs of three of the longer lemmas from Section 3. 

\section{Notation and Theory}
Let $f:{\mathbb R} \to {\mathbb R}$ be a function and $N$ be a Gaussian random variable with mean zero and variance $\sigma^2$.  We say that $f$ satisfies {\em moderate growth conditions} if there exist constants $A, B$, and $\alpha < 2$ such that
$|f(x)| \le Ae^{B|x|^\alpha}$.  Note that this implies ${\mathbb E}\left[ |f(N)|^p\right] < \infty$ for all $p \ge 1$. We use the symbol ${\mathbf 1}_{[0,t]}$ to denote the indicator function for a real interval $[0,t]$.  The symbol $C$ denotes a generic positive constant, which may vary from line to line. In general, the value of $C$ will depend on and the growth conditions of a test function $f$ and the properties of a stochastic process $B$.

\subsection{Elements of Malliavin Calculus.}
Following is a brief description of some identities that will be used in the paper.  The reader may refer to  \cite{Nualart} for detailed coverage of this topic.  
Let $Z = \{ Z(h), h\in\cal{H}\}$ be an {\em isonormal Gaussian process} on a probability space $( \Omega, {\cal F}, P )$, and indexed by a real separable Hilbert space $\cal{H}$.  That is, $Z$ is a family of Gaussian random variables such that ${\mathbb E}[ Z(h)] =0$ and ${\mathbb E}\left[Z(h)Z(g)\right] = \left< h,g\right>_{\cal{H}}$ for all $h,g \in\cal{H}$.  We will assume that $\cal F$ is the $\sigma-$algebra generated by $Z$.
  
For integers $q \ge 1$, let ${\cal H}^{\otimes q}$ denote the $q^{th}$ tensor product of ${\cal H}$, and ${\cal H}^{\odot q}$ denote the subspace of symmetric elements of ${\cal H}^{\otimes q}$.  We will also use the notation $\bigotimes_{i=1}^r h_i$ to denote an arbitrary tensor product, with the convention that $\bigotimes_{i=1}^0$ is the empty set.  

Let $\{ e_n, n\ge 1\}$ be a complete orthormal system in ${\cal H}$.  For functions $f, g \in {\cal H}^{\odot q}$ and $p\in\{0, \dots, q\}$, we define the $p^{th}$-order contraction of $f$ and $g$ as that element of ${\cal H}^{\otimes 2(q-p)}$ given by
\beq{contract} f\otimes_p g = \sum_{i_1, \dots , i_p=1}^\infty \left< f, e_{i_1} \otimes \cdots\otimes e_{i_p}\right>_{{\cal H}^{\otimes p}} \otimes \left< g, e_{i_1} \otimes \cdots\otimes e_{i_p}\right>_{{\cal H}^{\otimes p}}\eeq
where $f\otimes_0 g = f\otimes g$ and $f\otimes_q g = \left< f,g\right>_{{\cal H}^{\otimes q}}$.  While $f, g$ are symmetric, the contraction $f \otimes_q g$ may not be.  We denote its symmetrization by $f \widetilde \otimes_q g$.

Let ${\cal H}_q$ be the $q^{th}$ Wiener chaos of $Z$, that is, the closed linear subspace of $L^2(\Omega)$ generated by the random variables $\{ H_q(Z(h)), h \in {\cal H}, \|h \|_{\cal H} = 1 \}$, where $H_q(x)$ is the $q^{th}$ Hermite polynomial, defined as
$$H_q (x) = {(-1)^q}e^{\frac{x^2}{2}}\frac{d^q}{dx^q}e^{-\frac{x^2}{2}},$$
and we follow the convention of Hermite polynomials with unity as a leading coefficient.  For $q \ge 1$, it is known that the map 
\beq{Hmap} I_q(h^{\otimes q}) = H_q(Z(h))\eeq
provides a linear isometry between ${\cal H}^{\odot q}$ (equipped with the modified norm $\sqrt{q!}\| \cdot\|_{{\cal H}^{\otimes q}}$) and ${\cal H}_q$, where $I_q(\cdot)$ is the generalized Wiener-It\^o multiple stochastic integral.  By convention, ${\cal H}_0 = \mathbb{R}$ and $I_0(x) = x$.  It follows from \req{Hmap} and the properties of the Hermite polynomials that for $f\in{\cal H}^{\odot p},$ $g \in {\cal H}^{\odot q}$ we have
\beq{I_q_norm} {\mathbb E}\left[ I_p(f) I_q(g)\right] = \begin{cases}p!\left< f,g\right>_{{\cal H}^{\otimes p}}&\text{ if }\; p=q\\0&\text{otherwise}\end{cases}.\eeq

Let $\cal S$ be the set of all smooth and cylindrical random variables of the form $F = g(Z(\phi_1), \dots, Z(\phi_n))$, where $n \ge 1$; $g: {\mathbb R}^n \to {\mathbb R}$ is an infinitely differentiable function with compact support, and $\phi_i \in {\cal H}$. The Malliavin derivative of $F$ with respect to $Z$ is the element of $L^2(\Omega; {\cal H})$ defined as
$$DF = \sum_{i=1}^n \frac{\partial g}{\partial x_i}(Z(\phi_1), \dots, Z(\phi_n)) \phi_i.$$
By iteration, for any integer $q >1$ we can define the $q^{th}$ derivative $D^qF$, which is an element of $L^2(\Omega; {\cal H}^{\odot q})$.

We let ${\mathbb D}^{q,2}$ denote the closure of $\cal S$ with respect to the norm $\| \cdot \|_{{\mathbb D}^{q,2}}$ defined as 
$$\| F \|_{{\mathbb D}^{q,2}}^2 = {\mathbb E}\left[ F^2\right] + \sum_{i=1}^q {\mathbb E}\left[ \| D^iF \|_{{\cal H}^{\otimes i}}^2 \right].$$
More generally, for any Hilbert space $V$, let ${\mathbb D}^{k,p}(V)$ denote the corresponding Sobolev space of $V-$valued random variables.

We denote by $\delta$ the Skorohod integral, which is defined as the adjoint of the operator $D$.  A random element $u \in L^2(\Omega; {\cal H})$ belongs to the domain of $\delta$, Dom $\delta$, if and only if,
$$\left| {\mathbb E}\left[ \left< DF, u\right>_{\cal H}\right] \right| \le c_u \|F\|_{L^2(\Omega)}$$
for any $F \in {\mathbb D}^{1,2}$, where $c_u$ is a constant which depends only on $u$.  If $u \in $ Dom $\delta$, then the random variable $\delta (u) \in L^2(\Omega)$ is defined for all $F \in {\mathbb D}^{1,2}$ by the duality relationship,
\[{\mathbb E}\left[ F\delta(u) \right] = {\mathbb E}\left[ \left< DF, u \right>_{\cal H} \right].\]
This is sometimes called the Malliavin integration by parts formula.  We iteratively define the multiple Skorohod integral for $q \ge 1$ as $\delta (\delta^{q-1}(u))$, with $\delta^0(u) = u$.  For this definition we have,
\beq{duality}{\mathbb E}\left[ F\delta^q(u) \right] = {\mathbb E}\left[ \left< D^qF, u \right>_{{\cal H}^{\otimes q}} \right],\eeq
where $u \in$ Dom $\delta^q$ and $F \in {\mathbb D}^{q,2}$.  The adjoint operator $\delta^q$ is an integral in the sense that for a (non-random) $h \in {\cal H}^{\odot q}$, we have $\delta^q(h) = I_q(h)$. 

The following results will be used extensively in this paper.  The reader may refer to \cite{NoNu} and \cite{Nualart} for proofs and details.
\begin{lemma}  Let $q \ge 1$ be an integer, and $r, j, k >0$ be integers.

\begin{enumerate}[(a)]
\item Assume $F \in {\mathbb D}^{q,2}$, $u$ is a symmetric element of {\em Dom}  $\delta^q$, and $\left< D^rF, \delta^j (u) \right>_{{\cal H}^{\otimes r}} \in L^2(\Omega; {\cal H}^{\otimes q-r-j})$ for all $0 \le r+j \le q.$  Then $\left< D^rF, u \right>_{{\cal H}^{\otimes r}} \in \,${\em Dom }$\delta^r$ and
$$ F \delta^q(u) = \sum_{r=0}^q \binom{q}{r} \delta^{q-r}\left(\left< D^rF, u \right>_{{\cal H}^{\otimes r}}\right).$$

\item  Suppose that $u$ is a symmetric element of ${\mathbb D}^{j+k,2}({\cal H}^{\otimes j})$.  Then we have,
$$D^k \delta^j (u) = \sum_{i=0}^{j \wedge k} i!\binom{k}{i} \binom{j}{i} \delta^{j-i}\left(D^{k-i}u\right).$$

\item  {\em Meyer Inequality:} Let $p > 1$ and integers $k \ge q \ge 1$.  Then for any $u \in {\mathbb D}^{k,p}({\cal H}^{\otimes q})$,
$$ \left\| \delta^q (u)\right\|_{{\mathbb D}^{k-q,p}} \le c_{k,p} \left\| u \right\|_{{\mathbb D}^{k,p}}({\cal H}^{\otimes q}),$$
where $c_{k,p}$ is a constant.  %In particular,
%$$\left\| \delta^q (u) \right\|^2_{L^2(\Omega)} = {\mathbb E}\left[ \delta^q(u)^2\right] \le
%c_{q,2} \sum_{z=0}^q  {\mathbb E}\left[ \left\| D^{z}u\right\|_{{\cal H}^{\otimes z}}^2\right].$$ 
\item Let $u\in {\cal H}^{\odot p}$ and $v \in {\cal H}^{\odot q}$.  Then
\[ \delta^p(u) \delta^q(v) = \sum_{z=0}^{p\wedge q} z! \binom pz \binom qz \delta^{p+q-2z}(u \otimes_z v),\]
where $\otimes_z$ is the contraction operator defined in \req{contract}.
\end{enumerate}\end{lemma}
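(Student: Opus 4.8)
The four assertions are standard facts of Malliavin calculus, and the plan is to split the work sharply: reduce the three algebraic identities (a), (b), (d) to two first-order base cases plus induction, and treat (c) separately as the single genuinely analytic statement. The two base identities I would establish first are the first-order Leibniz (or ``integration by parts'') rule for the divergence,
\[ F\delta(u) = \delta(Fu) + \langle DF, u\rangle_{\cal H}, \]
and the first-order commutation relation
\[ D\delta(u) = u + \delta(Du). \]
Both follow directly from the duality relationship \req{duality} in the case $q=1$, namely ${\mathbb E}[G\delta(u)] = {\mathbb E}[\langle DG, u\rangle_{\cal H}]$. For the first, one pairs $F\delta(u)$ against an arbitrary smooth $G\in{\cal S}$ and expands $\langle D(FG), u\rangle_{\cal H} = F\langle DG,u\rangle_{\cal H} + G\langle DF,u\rangle_{\cal H}$ using the ordinary Leibniz rule for $D$; for the second, one either compares Wiener chaos expansions (using $D^rI_p = \frac{p!}{(p-r)!}I_{p-r}$) or differentiates the duality identity. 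Note these are exactly the $q=1$ instance of (a) and the $k=j=1$ instance of (b).

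For part (b) I would fix $j$ and induct on $k$: apply the first-order commutation relation once and then push the single new derivative through each of the remaining $j$ divergences, so that the coefficients $i!\binom ki\binom ji$ arise purely as the count of the ways the identity term ``$+u$'' in $D\delta = \delta D + \mathrm{id}$ can be produced when $k$ derivatives are commuted past $j$ integrals. The induction thus reduces to verifying a Vandermonde-type binomial recursion. Part (a) is then proved by induction on $q$, writing $\delta^q(u) = \delta(\delta^{q-1}(u))$, applying the base Leibniz rule, and invoking (b) to re-express the inner products $\langle D^rF,\cdot\rangle_{{\cal H}^{\otimes r}}$ that appear; once more the binomial coefficients are bookkeeping and the content lies entirely in the two base identities.

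For part (d), since $u$ and $v$ are nonrandom we have $\delta^p(u) = I_p(u)$ and $\delta^q(v) = I_q(v)$, so (d) is the classical multiplication formula for multiple Wiener--It\^o integrals; I would obtain it cleanly from (a) and (b). Set $F = \delta^p(u)$ and apply (a) with integrand $v$. In computing $D^rF = D^r\delta^p(u)$ via (b), every term containing a surviving derivative $D^{r-i}u$ vanishes because $u$ is deterministic, leaving only $D^r\delta^p(u) = r!\binom pr\,\delta^{p-r}(u)$ for $r\le p$. Pairing against $v$ turns $\langle \delta^{p-r}(u), v\rangle_{{\cal H}^{\otimes r}}$ into $\delta^{p-r}(u\otimes_r v)$, and collapsing $\delta^{q-r}\delta^{p-r} = \delta^{p+q-2r}$ then yields exactly $\sum_{z=0}^{p\wedge q} z!\binom pz\binom qz\,\delta^{p+q-2z}(u\otimes_z v)$. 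The appearance of the unsymmetrized contraction $u\otimes_z v$ rather than $u\,\widetilde\otimes_z v$ is harmless, since $\delta^m(h) = I_m(h) = I_m(\widetilde h)$ for any nonrandom $h$.

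The main obstacle is part (c), Meyer's inequality, which is the only statement that does not reduce to the algebra above. Here the plan is to invoke the full strength of Meyer's theorem: the $L^p$-boundedness of the Ornstein--Uhlenbeck--Riesz transforms, combined with the hypercontractivity of the Ornstein--Uhlenbeck semigroup, yields the boundedness of $\delta^q$ from ${\mathbb D}^{k,p}({\cal H}^{\otimes q})$ into ${\mathbb D}^{k-q,p}$ for $p>1$. This is the genuinely hard analytic input and I would cite it from \cite{Nualart} rather than reprove it, since the Littlewood--Paley/hypercontractivity machinery behind it is well beyond the elementary duality arguments that suffice for (a), (b), and (d).
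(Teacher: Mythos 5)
Your proposal is correct in substance, but note that the paper does not prove this lemma at all: it is stated as a collection of known facts with the reader referred to \cite{NoNu} and \cite{Nualart}, so any comparison is really with those references rather than with an argument in the text. Your reduction of (a) and (b) to the two first-order identities $F\delta(u)=\delta(Fu)+\langle DF,u\rangle_{\cal H}$ and $D\delta(u)=u+\delta(Du)$, followed by induction with the binomial coefficients arising from a Vandermonde-type recursion, is exactly how these are established in \cite{NoNu} (their Lemmas 2.1--2.2) and in \cite{Nualart} (Propositions 1.3.2--1.3.3), and your decision to cite rather than reprove Meyer's inequality in (c) matches what any treatment at this level would do --- that is the one genuinely analytic input, resting on the $L^p$-boundedness of the Ornstein--Uhlenbeck Riesz transforms. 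The only step I would ask you to tighten is the end of (d): the phrase ``collapsing $\delta^{q-r}\delta^{p-r}=\delta^{p+q-2r}$'' is not literally a composition of divergences applied to a single element, since $\langle D^r\delta^p(u),v\rangle_{{\cal H}^{\otimes r}}$ is an ${\cal H}^{\otimes(q-r)}$-valued random variable of the form $r!\binom{p}{r}I_{p-r}\bigl((u\otimes_r v)(\cdot,\ast)\bigr)$; what you need is the identity $\delta^{q-r}\bigl(I_{p-r}(w(\cdot,\ast))\bigr)=I_{p+q-2r}(w)$ for a \emph{deterministic} kernel $w$, which does hold (and is why no trace corrections appear) but deserves a sentence of justification or a citation to the classical multiplication formula for multiple Wiener--It\^o integrals, which is all part (d) asserts. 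With that caveat your outline is a faithful reconstruction of the standard proofs behind the lemma.
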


\subsection{A convergence theorem.}
\begin{definition} %Stable Convergence
Assume $F_n$ is a sequence of $d-$dimensional random variables defined on a probability space $(\Omega, {\cal F}, P)$, and $F$ is a $d-$dimensional random variable defined on $(\Omega, {\cal G}, P)$, where ${\cal F} \subset {\cal G}$.  We say that $F_n$ {\em converges stably} to $F$ as $n \to \infty$, if, for any continuous and bounded function $f:\,{\mathbb R}^d \to {\mathbb R}$ and ${\mathbb R}$-valued, ${\cal F}-$measurable random variable $M$, we have
$$\lim_{n\to \infty} {\mathbb E} \left(f(F_n) M\right) = {\mathbb E}\left(f(F) M\right).$$
\end{definition}

%%Main Convergence Theorem
The first version of the following central limit theorem appeared in \cite{NoNu}.  In \cite{HaNu_14}, we extended this to a multi-dimensional version, where the sequence was a vector of $d$ components all in the same Wiener chaos.  For our present paper, we need a slight modification.  In this version, we lay out conditions for stable convergence of a sequence of vectors, where the vector components are not necessarily in the same Wiener chaos.
\begin{theorem} 
Let $d \ge 1$ be an integer, and $q_1, \dots, q_d$ be positive integers with $q^* = \max\{ q_1, \dots, q_d\}$.  Suppose that $F_n$ is a sequence of random variables in $\mathbb{R}^d$ of the form $F_n = \left( \delta^{q_1} (u_n^1), \dots, \delta^{q_d}(u_n^d) \right)$, where each $u_n^i$ is a $\mathbb{R}-$valued symmetric function in $\mathbb{D}^{2q^*, 2q_i}({\cal H}^{\otimes {q_i}})$.  Suppose that the sequence $F_n$ is bounded in $L^1(\Omega)$ and that:
\begin{enumerate}[(a)]
\item  $\left< u_n^j, \bigotimes_{\ell =1}^m (D^{a_\ell}F_n^{j_\ell}) \otimes h \right>_{{\cal H}^{\otimes q}}$ converges to zero in $L^1(\Omega)$ for all  integers $1 \le j, j_\ell \le d$, all integers $1 \le a_1, \dots, a_m, r \le q_j-1$  such that $a_1 + \cdots + a_m + r = q_j$; and all $h \in {\cal H}^{\otimes r}$.
\item  For each $1 \le i, j \le d$, $\left< u_n^i, D^{q_i}F_n^i\right>_{{\cal H}^{\otimes {q_i}}}$ converges in $L^1(\Omega)$ to a nonnegative random variable $s_i^2$,  and for $i \neq j$, $\left< u_n^i, D^{q_i}F_n^j\right>_{{\cal H}^{\otimes {q_i}}}$ converges to zero in $L^1(\Omega)$.
\end{enumerate}
Then $F_n$ converges stably to a random vector in ${\mathbb R}^d$,  whose components each have independent Gaussian law ${\cal N} (0, s_i^2)$ given $Z$. \end{theorem}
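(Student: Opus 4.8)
The plan is to establish stable convergence through the conditional characteristic function, using the Malliavin integration by parts \req{duality} to turn the divergence structure $F_n^j=\delta^{q_j}(u_n^j)$ into exactly the contractions that hypotheses (a) and (b) control. Since $\cal F = \sigma(Z)$ and the limit $F$ is, conditionally on $Z$, a vector of independent ${\cal N}(0,s_i^2)$ variables, its conditional characteristic function is the $\cal F$-measurable quantity $\exp(-\tfrac12\sum_k\xi_k^2 s_k^2)$. By Definition 2.2 and the totality of $\{e^{iZ(h)}:h\in{\cal H}\}$ in $L^2(\Omega,{\cal F})$, it suffices to prove that for every $\xi\in{\mathbb R}^d$ and every bounded $\cal F$-measurable multiplier $M$,
\[ {\mathbb E}\big[e^{i\langle\xi,F_n\rangle}M\big]\longrightarrow {\mathbb E}\big[e^{-\frac12\sum_k\xi_k^2 s_k^2}M\big]. \]
The $L^1$-boundedness of $F_n$ supplies the tightness that lets this Fourier criterion deliver genuine stable convergence.

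Fixing $M$, I would set $\phi_n(\xi)={\mathbb E}[e^{i\langle\xi,F_n\rangle}M]$, differentiate in $\xi_j$, and apply \req{duality} with $F_n^j=\delta^{q_j}(u_n^j)$ to obtain $\partial_{\xi_j}\phi_n(\xi)=i\,{\mathbb E}[\langle u_n^j,D^{q_j}(e^{i\langle\xi,F_n\rangle}M)\rangle_{{\cal H}^{\otimes q_j}}]$. The next step is to expand the derivative by the Leibniz rule and the Fa\`a di Bruno formula for $D^{q_j}e^{i\langle\xi,F_n\rangle}$: every resulting term is $e^{i\langle\xi,F_n\rangle}$ times a symmetric tensor of the form $\bigotimes_{\ell=1}^m D^{a_\ell}F_n^{p_\ell}\otimes D^r M$ with $a_1+\cdots+a_m+r=q_j$ and each $a_\ell\ge 1$. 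Contracting against the symmetric $u_n^j$, the single principal term — one block of full order $q_j$ with no derivative falling on $M$ — produces $\sum_p\xi_p\langle u_n^j,D^{q_j}F_n^p\rangle$, while all other terms have the $q_j$ derivatives genuinely split and/or partly landing on $M$.

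By hypothesis (b), $\langle u_n^j,D^{q_j}F_n^p\rangle\to s_j^2\delta_{jp}$ in $L^1$, so after multiplying by the bounded factor $e^{i\langle\xi,F_n\rangle}M$ the principal term contributes $-\xi_j\,{\mathbb E}[e^{i\langle\xi,F_n\rangle}(s_j^2 M)]$; only the diagonal $p=j$ survives, the off-diagonal part of (b) removing the cross terms. Every remaining term has each $a_\ell\le q_j-1$ and is precisely of the shape annihilated by hypothesis (a). To use (a), which is phrased with a deterministic slot $h$, I would reduce to smooth cylindrical $M$ (dense among bounded $\cal F$-measurable multipliers), for which $D^r M$ is a finite combination of deterministic tensors $h\in{\cal H}^{\otimes r}$ with bounded coefficients, and apply (a) factor by factor; the accompanying norms are kept uniformly bounded by the Sobolev regularity $u_n^i\in{\mathbb D}^{2q^*,2q_i}$ and Meyer's inequality (Lemma 2.1(c)). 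This yields $\partial_{\xi_j}\phi_n(\xi)=-\xi_j\,{\mathbb E}[e^{i\langle\xi,F_n\rangle}(s_j^2 M)]+o(1)$, with $o(1)\to 0$ uniformly on compact $\xi$-sets and uniformly over $\|M\|_\infty\le 1$.

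Finally I would close the argument. The target $\Phi(\xi;M)={\mathbb E}[e^{-\frac12\sum_k\xi_k^2 s_k^2}M]$ solves the identical first-order system $\partial_{\xi_j}\Phi(\cdot\,;M)=-\xi_j\Phi(\cdot\,;s_j^2 M)$ with the matching initial data $\Phi(0;M)={\mathbb E}[M]=\phi_n(0;M)$, so the difference $D_n(\xi;M)=\phi_n(\xi;M)-\Phi(\xi;M)$ satisfies $\partial_{\xi_j}D_n(\xi;M)=-\xi_j D_n(\xi;s_j^2 M)+o(1)$ with $D_n(0;\cdot)=0$. Because the right-hand side feeds the same family of functionals back in and the class $\{M:\|M\|_\infty\le 1\}$ is stable under multiplication by the variances $s_j^2$ (after a routine truncation so that the $s_j^2$ are bounded), a Gronwall/iteration estimate over this class forces $\sup_{\|M\|_\infty\le 1}|D_n(\xi;M)|\to 0$ on compacts, which is the desired limit. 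I expect the main obstacle to be exactly this closure of a \emph{self-referential} characteristic-function system — a feature forced by the randomness of the limiting variances $s_i^2$, which is what makes the limit a conditional (mixed) Gaussian rather than a fixed one — together with the uniform, $\|M\|_\infty$-controlled estimation of the remainder, i.e.\ the bookkeeping that checks condition (a) really annihilates every non-principal contraction once the test functional is reduced to deterministic tensor slots.
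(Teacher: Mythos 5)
Your core mechanism is exactly the paper's: write $\partial_{\xi_j}\phi_n = i\,{\mathbb E}\langle u_n^j, D^{q_j}(e^{i\langle\xi,F_n\rangle}M)\rangle_{{\cal H}^{\otimes q_j}}$ by the duality \req{duality}, expand by Leibniz so that every non-principal term is a contraction of $u_n^j$ against split derivatives $\bigotimes D^{a_\ell}F_n^{j_\ell}$ tensored with a slot coming from $D^rM$ (reduced to deterministic tensors via cylindrical $M$), kill those by condition (a), and let condition (b) turn the principal term into $-\xi_j\,{\mathbb E}[e^{i\langle\xi,F_n\rangle}s_j^2M]$. Where you diverge is the closure, and that is where your argument has a genuine gap as written. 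Your Gronwall iteration needs the remainder to be $o(1)$ \emph{uniformly} over $\{M:\|M\|_\infty\le 1\}$ (and over the multipliers $s_{j_1}^2\cdots s_{j_k}^2M$ generated by the recursion). Condition (a) cannot deliver that: it asserts $L^1$-convergence to zero for each \emph{fixed} $h\in{\cal H}^{\otimes r}$, while the tensors appearing in $D^rM$ are unbounded over the unit ball of $\|\cdot\|_\infty$ (e.g.\ $M=\sin(kZ(h))$ has $\|M\|_\infty\le1$ but $\|D^rM\|_{{\cal H}^{\otimes r}}\sim k^r$). Moreover the multipliers $s_j^2M$ fed back into the system are only $L^1$-limits, not smooth cylindrical functionals, so the expansion step itself would have to be re-justified for them. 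The self-referential system can probably be closed with additional truncation and approximation work, but not by the uniformity claim you invoke.

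The paper avoids this entirely with a small but decisive device you omitted: it first uses the $L^1$-boundedness of $F_n$ to extract a subsequence along which $\left(F_n, Z(h_1),\dots,Z(h_m)\right)$ converges in law to some $\left(F_\infty, Z(h_1),\dots,Z(h_m)\right)$, identifies $\lim\partial_{\xi_j}\phi_n$ in two ways (once by the convergence in law, once by the duality computation above), and thereby obtains the PDE
\[
\frac{\partial}{\partial \lambda_j}\,{\mathbb E}\bigl(e^{i\lambda^TF_\infty}\,\big|\,Z(h_1),\dots,Z(h_m)\bigr)
=-\lambda_j s_j^2\,{\mathbb E}\bigl(e^{i\lambda^TF_\infty}\,\big|\,Z(h_1),\dots,Z(h_m)\bigr)
\]
for the \emph{limit object}. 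This is solved pointwise in $\omega$ with no uniformity in $M$ required, every subsequential limit is the same, and stable convergence follows. I recommend you replace your Gronwall closure with this tightness-plus-subsequence argument; the rest of your proof then matches the paper's.
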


\medskip 
\begin{proof}
This proof mostly follows that given in \cite{HaNu_14}, except in that case there was only a single value of $q$.  We use the conditional characteristic function.  Given any $h_1, \dots h_m \in {\cal H}$, we want to show that the sequence 
$$\xi_n = \left(F_n^1, \dots, F_n^d, Z(h_1), \dots, Z(h_m) \right)$$
converges in distribution to a vector $\left(F_\infty^1, \dots F_\infty^d,  Z(h_1), \dots, Z(h_m) \right)$, where, for any vector $\lambda \in {\mathbb R}^d$, $F_\infty$ satisfies
\beq{Ch_gen}{\mathbb E}\left(e^{i\lambda^T F_\infty} | Z(h_1), \dots, Z(h_m)\right) = \exp\left(-\frac{1}{2}\lambda^T S \lambda \right),\eeq
where $S$ is the diagonal $d \times d$ matrix with entries $s_i^2$.

\medskip  Since $F_n$ is bounded in $L^1(\Omega)$, the sequence $\xi_n$ is tight in the sense that for any $\varepsilon > 0$, there is a $K>0$ such that $P\left( F_n\in [-K,K]^d\right) > 1-\varepsilon$, which follows from Chebyshev inequality.  
Dropping to a subsequence if necessary, we may assume that $\xi_n$ converges in distribution to a limit $\left(F_\infty^1, \dots F_\infty^d,  Z(h_1), \dots Z(h_m) \right)$.  Let $Y := g\left(Z(h_1), \dots, Z(h_m)\right)$, where $g \in {\cal C}^\infty_b ({\mathbb R}^m)$, and consider $\phi_n(\lambda) = \phi(\lambda, \xi_n) := {\mathbb E}\left(e^{i \lambda^T F_n} Y\right)$ for $\lambda \in {\mathbb R}^d$.  The convergence in law of $\xi_n$ implies that for each $1\le j\le d$:
\beq{Partial_g1} \lim_{n\to\infty} \frac{\partial \phi_n}{\partial \lambda_j} = \lim_{n\to\infty} i {\mathbb E}\left(F_n^j e^{i \lambda^T F_n} Y\right) =  i {\mathbb E}\left(F_\infty^j e^{i \lambda^T F_\infty} Y\right),\eeq
where convergence in distribution follows from a truncation argument applied to $F_n^j$.

On the other hand, using the duality property of the Skorohod integral and the Malliavin derivative:
$$ \frac{\partial \phi_n}{\partial\lambda_j} = i {\mathbb E}\left(\delta^{q_j}(u_n^j) e^{i\lambda^T F_n} Y \right) = i{\mathbb E}\left(\left< u_n^j,D^{q_j}\left(e^{i\lambda^T F_n}Y\right)\right>_{\hten^{\otimes {q_j}}}\right)$$

$$ = i \sum_{a=0}^{q_j} \binom{q_j}{a} {\mathbb E}\left(\left<u_n^j, D^a\left(e^{i\lambda^T F_n}\right) \Ten D^{q_j-a}Y\right>_{\hten^{\otimes q_j}}\right) $$

\beq{Expand_2g} = i \left\{ {\mathbb E} \left< u_n^j, Y D^{q_j} e^{i\lambda^T F_n} \right>_{{\cal H}^{\otimes q_j}} + \sum_{a=0}^{q_j-1} \binom{q_j}{a} {\mathbb E} \left<u_n^j, D^a e^{i \lambda^T F_n} \Ten D^{q_j-a} Y\right>_{{\cal H}^{\otimes q_j}} \right\} \eeq

By condition (a), we have that $\left< u_n^j, D^a e^{i\lambda^T F_n} \Ten D^{q_j-a}Y\right>_{{\cal H}^{\otimes q_j}}$ converges to zero in $L^1(\Omega)$ when $a < q_j$, so the sum term vanishes as $n \to \infty$, and this leaves
$$\lim_{n \to \infty} i {\mathbb E}\left< u_n^j , YD^qe^{i \lambda^T F_n} \right>_{{\cal H}^{\otimes q_j}} \; = \lim_{n\to \infty} i \sum_{k=1}^d {\mathbb E} \left( i\lambda_k e^{i \lambda^T F_n} \left<u_n^j, YD^{q_j} F_n^k \right>_{{\cal H}^{\otimes q_j}} \right)$$
$$ = - {\mathbb E}\left( \lambda_j e^{i \lambda^T F_\infty} s_j^2 Y \right)$$
because the lower-order derivatives in $D^{q_j} e^{i\lambda^T F_n}$ also vanish by condition (a), and cross terms ($j \neq k$) terms vanish by condition (b).  Combining this with \req{Partial_g1}, we obtain:
$$i {\mathbb E}\left(F_{\infty}^j e^{i \lambda \cdot F_\infty} Y \right) = -\lambda_j {\mathbb E} \left(e^{i \lambda \cdot F_\infty} s_{j}^2Y\right).$$

This leads to the PDE system:
$$\frac{\partial}{\partial \lambda_j} {\mathbb E} \left(e^{i \lambda^T F_\infty} | Z(h_1), \dots, Z(h_m)\right) = -  \lambda_j s_{j}^2 {\mathbb E} \left(e^{i \lambda^T F_\infty} | Z(h_1), \dots, Z(h_m)\right)$$
which has unique solution \req{Ch_gen}. \end{proof}

\medskip
\begin{remark}  It suffices to impose condition (a) for $h \in {\cal S}_0$, where ${\cal S}_0$ is a total subset of ${\cal H}^{\otimes r}$.\end{remark}

\begin{remark}
Suppose $F_n$ is the vector sequence $(F_n, G_n)$, where $F_n = \delta^p(u_n)$ and $G_n = \delta^q (v_n)$.  Then to satisfy Theorem 2.3, $F_n$ and $G_n$ must be bounded in $L^1(\Omega)$, and the following terms must tend to zero in $L^1(\Omega)$:
\begin{enumerate}
\item $\left< u_n, h\right>_{{\cal H}^{\otimes p}}$ and $\left< v_n, g\right>_{{\cal H}^{\otimes q}}$, for arbitrary $h \in{\cal H}^{\otimes p}$ and $g\in {\cal H}^{\otimes q}$, respectively.
\item $\left< u_n, \bigotimes_{i=1}^s D^{a_i} F_n \bigotimes_{i=s+1}^r D^{a_i}G_n \otimes h\right>_{{\cal H}^{\otimes p}}$, where $0 \le a_i < p$, $a_1 + \cdots + a_r < p$, and $h \in {\cal H}^{\otimes p-(a_1+\cdots +a_r)}$; and $\left< u_n, \bigotimes_{i=1}^s D^{a_i} F_n \bigotimes_{i=s+1}^r D^{a_i}G_n \right>_{{\cal H}^{\otimes p}}$, where $0 \le a_i < p$ and $a_1 + \cdots + a_r =p$.
\item $\left< v_n, \bigotimes_{i=1}^s D^{a_i} F_n \bigotimes_{i=s+1}^r D^{a_i}G_n \otimes h\right>_{{\cal H}^{\otimes q}}$, where $0 \le a_i < q$, $a_1 + \cdots + a_r < q$, and $h \in {\cal H}^{\otimes q-(a_1+\cdots +a_r)}$; and $\left< v_n, \bigotimes_{i=1}^s D^{a_i} F_n \bigotimes_{i=s+1}^r D^{a_i}G_n \right>_{{\cal H}^{\otimes q}}$, where $0 \le a_i < q$ and $a_1 + \cdots + a_r =q$.
\item $\left< u_n, D^p G_n\right>_{{\cal H}^{\otimes p}}$ and $\left< v_n, D^q F_n\right>_{{\cal H}^{\otimes q}}$.
\end{enumerate}
Then for condition (b), the following two terms must converge in $L^1(\Omega)$ to nonnegative random variables:  
 $\left< u_n, D^p F_n\right>_{{\cal H}^{\otimes p}}$ and $\left< v_n, D^q G_n\right>_{{\cal H}^{\otimes q}}$.
\end{remark}

\subsection{Fractional Brownian motion.}%%%%%%%%%%%%%%%%%%%%%%%%%
For some $T>0$, let $B = \{B_t^H, 0\le t \le T\}$ be a fractional Brownian motion with Hurst parameter $H$. That is, $B$ is a centered Gaussian process with covariance $R(s,t)$ given in \req{cov_fbm}.  Let ${\cal E}$ denote the set of ${\mathbb R}$-valued step functions on $[0,T]$.  We then let $\hten$ be the Hilbert space defined as the closure of $\cal E$ with respect to the inner product
\[ \left< {\mathbf 1}_{[0,s]}, {\mathbf 1}_{[0,t]}\right>_\hten = R(s,t).\]
The mapping ${\mathbf 1}_{[0,t]} \mapsto B_t$ can be extended to a linear isometry between $\hten$ and the Gaussian space spanned by $B$.  In this way, $\{ B(h), h\in\hten\}$ is an isonormal Gaussian process as in Section 2.1.

For an integer $n \ge 2$, we consider a uniform partition of $[0, \infty)$ given by $\{ j/n, j \ge 1\}$. Define the following notation:

\begin{itemize}
%\item $\varepsilon_t = {\mathbf 1}_{[0,t]}$
\item $\Delta B_{\frac jn} = B_{\frac{j+1}{n}} - B_{\frac jn}$, and $\widetilde B_{\frac jn} = \frac 12\left(B_{\frac jn} + B_{\frac{j+1}{n}}\right)$
\item $\partial_{\frac jn} = {\mathbf 1}_{[\frac jn, \frac{j+1}{n}]}$, $\varepsilon_t = {\mathbf 1}_{[0,t]}$, and $\widetilde \varepsilon_{\frac jn} = \frac 12 \left({\mathbf 1}_{[0,\frac{j}{n}]}+{\mathbf 1}_{[0,\frac{j+1}{n}]}\right) = \varepsilon_{\frac jn} + \frac 12 \partial_{\frac jn}$.
\end{itemize}
Assume $H < 1/2$.  The following fBm properties follow from \req{cov_fbm}.
\begin{enumerate}[(B.1)]
\item ${\mathbb E}\left[ \Delta B_{\frac jn}^2\right] =  \left< \partial_{\frac jn}, \partial_{\frac jn}\right>_\hten = n^{-2H}$.
\item ${\mathbb E}\left[ \Delta B_{\frac jn}\Delta B_{\frac{j+1}{n}}\right]= \left< \partial_{\frac jn}, \partial_{\frac{j+1}{n}}\right>_\hten = (2^{2H}-2)/2n^{2H}$.
\item If $|k-j| \ge 2$, $\left|{\mathbb E}\left[ \Delta B_{\frac jn}\Delta B_{\frac{k}{n}}\right]\right| = \left|\left< \partial_{\frac jn}, \partial_{\frac kn}\right>_\hten\right| \le Cn^{-2H} |j-k|^{2H-2}$, where the constant $C$ does not depend on $j$.
\item  For each $j \ge 0$, $\sup_{t\in[0,T]}\left|{\mathbb E}\left[ \Delta B_{\frac jn}B_t\right]\right| \le 2n^{-2H}$.
\item  For any $t \in [0,T]$ and integer $j\ge 1$, $\left|{\mathbb E}\left[ \Delta B_{\frac jn} B_t\right]\right| =\left|\left< \partial_{\frac jn}, \varepsilon_t\right>_\hten\right| \le Cn^{-2H}\left( j^{2H-1} + |j-nt|^{2H-1}\right)$.  In particular, if $|k-j| \ge 2$, $
\left|{\mathbb E}\left[ \Delta B_{\frac jn} \widetilde{B}_{\frac kn}\right]\right| = \left|\left< \partial_{\frac jn}, \widetilde{\varepsilon}_{\frac kn}\right>_\hten\right| \le n^{-2H}\left(j^{2H-1} + |j-k|^{2H-1}\right)$.
%\item  For any integer $j\ge 1$, $\left|{\mathbb E}\left[ \Delta B_{\frac jn} \widetilde{B}_{\frac jn}\right]\right| =\left|\left< \partial_{\frac jn}, \widetilde{\varepsilon}_{\frac jn}\right>_\hten\right| \le n^{-2H}j^{2H-1}$.
\end{enumerate}

As a result of properties (B.1) - (B.5), we have the following technical results.
\begin{lemma}Let $H < 1/2$ and $0 < t \le T$, and let $n \ge 2$ be an integer.  Then
\begin{enumerate}[(a)]
%\item  For any $0 \le j,k \le \lfloor nT \rfloor$ and $0 \le t \le T$, $\left|\left< \partial_{\frac jn}, \partial_{\frac kn}\right>_\hten\right| \le 2 \left|\left< \partial_{\frac jn}, \varepsilon_t\right>_\hten \right|$.
\item For fixed $0 \le s \le T$ and integer $r \ge 1$,
\[  \sum_{j=0}^{\Nt -1} \left| \left< \partial_{\frac jn}, \varepsilon_s\right>_\hten^r\right| \le Cn^{-2(r-1)H}.\]
\item For integer $r \ge 1$,
\[\sum_{j=0}^{\Nt -1} \left| \left< \partial_{\frac jn}, \widetilde \varepsilon_{\frac jn}\right>_\hten^r\right| \le Cn^{-2(r-1)H}.\]
\item For integers $r \ge 1$ and $0\le k \le \Nt$,
\[ \sum_{j=0}^{\Nt -1} \left|\left< \partial_{\frac jn}, \partial_{\frac kn}\right>_\hten^r\right| \le C n^{-2rH},\]
and consequently \[ \sum_{j,k=0}^{\Nt -1} \left|\left< \partial_{\frac jn}, \partial_{\frac kn}\right>_\hten^r\right| \le C\Nt n^{-2rH}.\]
\end{enumerate}
\end{lemma}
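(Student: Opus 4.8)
The three parts all reduce to the same power-law summation, so my plan is to first isolate that estimate and then feed in the pointwise bounds (B.1)--(B.5). The key auxiliary fact I would establish is that, for $H<1/2$ and any integer $r\ge 1$,
\[ \sum_{j=1}^{\Nt-1} j^{r(2H-1)} \le C\,n^{2H}, \qquad \sum_{m=2}^{\infty} m^{r(2H-2)} < \infty. \]
The second bound is immediate because $r(2H-2)<-1$ for every $r\ge 1$ (since $2-2H>1$ forces $r(2-2H)>1$). For the first, I would compare the sum to $\int_0^{\Nt} x^{r(2H-1)}\,dx$ and split into the cases $r(2H-1)>-1$, $=-1$, $<-1$; only the case $r(2H-1)>-1$ is delicate, where the sum is $\le C\,(\Nt)^{1+r(2H-1)} \le C\,n^{1-r(1-2H)}$, and the elementary inequality $1-r(1-2H)\le 2H$ (equivalent to $r\ge 1$ after dividing by $1-2H>0$) gives the claimed $n^{2H}$.

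For part (a), I would bound $|\langle \partial_{\frac jn},\varepsilon_s\rangle_\hten|$ by (B.5) for the indices with $j\ge 1$ and $|j-ns|\ge 1$, use $(a+b)^r\le C(a^r+b^r)$, and then sum the two resulting series $\sum j^{r(2H-1)}$ and $\sum |j-ns|^{r(2H-1)}$ via the auxiliary estimate. The finitely many exceptional indices --- namely $j=0$ and the at most two values of $j$ with $|j-ns|<1$, where (B.5) is singular or inapplicable --- I would bound directly by (B.4), each contributing at most $(2n^{-2H})^r = Cn^{-2rH}\le Cn^{-2(r-1)H}$. Altogether this yields $Cn^{-2rH}\cdot n^{2H}=Cn^{-2(r-1)H}$.

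Part (b) is where the real subtlety lies, and where I would avoid any crude per-term bound: using only $|\langle\partial_{\frac jn},\widetilde\varepsilon_{\frac jn}\rangle_\hten|\le Cn^{-2H}$ gives $\sum \le C\,\Nt\, n^{-2rH}\approx Cn^{1-2rH}$, which is too large when $H<1/2$. Instead I would compute the inner product exactly. Writing $\widetilde\varepsilon_{\frac jn}=\varepsilon_{\frac jn}+\tfrac12\partial_{\frac jn}$ and using (B.1) together with the definition of $R$, the two constant terms cancel and one finds
\[ \langle \partial_{\frac jn},\widetilde\varepsilon_{\frac jn}\rangle_\hten = \tfrac12\, n^{-2H}\big[(j+1)^{2H}-j^{2H}\big]. \]
The mean value theorem then gives $(j+1)^{2H}-j^{2H}\le 2H\,j^{2H-1}$ for $j\ge 1$, so that $|\langle\partial_{\frac jn},\widetilde\varepsilon_{\frac jn}\rangle_\hten|\le Cn^{-2H}j^{2H-1}$, while $j=0$ contributes $\tfrac12 n^{-2H}$. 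Raising to the $r$-th power and summing via the auxiliary estimate again produces $Cn^{-2rH}(1+n^{2H})\le Cn^{-2(r-1)H}$.

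Finally, for part (c) I would split the sum according to $|j-k|=0$, $|j-k|=1$, and $|j-k|\ge 2$. The first two cases contribute at most three terms, each $O(n^{-2rH})$ by (B.1)--(B.2); for the tail I use (B.3) to get $Cn^{-2rH}\sum_{|j-k|\ge 2}|j-k|^{r(2H-2)}$, and the convergent series from the auxiliary estimate bounds this by $Cn^{-2rH}$ uniformly in $k$. Summing this uniform-in-$k$ bound over $k\in\{0,\dots,\Nt-1\}$ immediately gives the stated double-sum estimate $C\,\Nt\, n^{-2rH}$. I expect the main obstacle to be the sharp accounting in part (b): recognizing that the uniform $n^{-2H}$ bound is insufficient and that one must exploit the exact cancellation yielding the decaying factor $(j+1)^{2H}-j^{2H}\asymp j^{2H-1}$; the remaining work is the careful but routine isolation of the boundary and diagonal terms where the pointwise bounds are singular or do not apply.
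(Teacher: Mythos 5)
Your proposal is correct and follows essentially the same route as the paper: isolate the finitely many singular or diagonal indices, apply (B.1)--(B.5) pointwise, and control the resulting power sums $\sum_j j^{r(2H-1)}$ and $\sum_m m^{r(2H-2)}$; in particular you find the same exact identity $\langle \partial_{\frac jn},\widetilde\varepsilon_{\frac jn}\rangle_{\hten}=\tfrac12 n^{-2H}\bigl[(j+1)^{2H}-j^{2H}\bigr]$ that the paper exploits in part (b) (there by telescoping the sum, here via the mean value theorem). The only cosmetic difference is that the paper factors out $r-1$ powers using the uniform bound $Cn^{-2H}$ from (B.4) and reduces each part to the case $r=1$, whereas you sum the $r$-th powers directly; both give the stated exponents.
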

\begin{proof}
For (a), first note that  we have $\left|\left< \partial_0, \varepsilon_t\right>_\hten\right| \le T^Hn^{-H}$ by (B.1) and Cauchy-Schwarz.  Further, if $\left| \frac jn - s\right| < \frac 2n$, then by (B.4) we have $\left| \left< \partial_{\frac jn}, \widetilde \varepsilon_s\right>_\hten \right| \le Cn^{-2H}$.  Let ${\cal J} = \{ 1 \le j \le \Nt, |j - ns| > 1\}$; and note that $| {\cal J}^c| \le 2$.  Then for the case $r =1$ we have
\begin{align*}
\sum_{j=0}^{\Nt -1} \left| \left< \partial_{\frac jn}, \varepsilon_s\right>_\hten\right| &=
\left| \left< \partial_0, \varepsilon_t\right>_\hten\right| + \sum_{j\in{\cal J}^c} \left| \left< \partial_{\frac jn}, \varepsilon_s\right>_\hten\right| + \sum_{j\in {\cal J}} \left| \left< \partial_{\frac jn}, \varepsilon_s\right>_\hten\right|\\
&\le T^Hn^{-H} + Cn^{-2H} + Cn^{-2H}\sum_{j=1}^{\Nt -1} j^{2H-1} + | j -ns|^{2H-1}\\
&\le C \Nt^{2H} n^{-2H} \le C.
\end{align*}
For the case $r > 1$, we have by (B.4)
\[
\sum_{j=0}^{\Nt -1} \left| \left< \partial_{\frac jn}, \varepsilon_s\right>_\hten^r\right| \le \sup_{0\le j \le \Nt} \left| \left<\partial_{\frac jn}, \varepsilon_s\right>_\hten^{r-1}\right|
\sum_{j=0}^{\Nt -1} \left| \left< \partial_{\frac jn}, \varepsilon_s\right>_\hten\right|
\le Cn^{-2(r-1)H}.\]

For (b), we have by (B.4) and \req{cov_fbm}
\begin{align*}
\sum_{j=0}^{\Nt -1} \left| \left< \partial_{\frac jn}, \widetilde \varepsilon_{\frac jn}\right>_\hten^r\right| &\le \sup_{0\le j \le \Nt} \left| \left< \partial_{\frac jn}, \widetilde \varepsilon_{\frac jn}\right>_\hten^{r-1}\right|\sum_{j=0}^{\Nt -1} \left| \left< \partial_{\frac jn}, \widetilde \varepsilon_{\frac jn}\right>_\hten\right|\\
&\le Cn^{-2(r-1)H}\sum_{j=0}^{\Nt -1} \frac 12\left| {\mathbb E}\left[ \Delta B_{\frac jn}\left( B_{\frac jn} + B_{\frac{j+1}{n}}\right)\right]\right|\\
&= Cn^{-2(r-1)H}\sum_{j=0}^{\Nt -1} \frac 12\left| {\mathbb E}\left[B^2_{\frac{j+1}{n}} - B^2_{\frac jn}\right]\right|  \\
&= Cn^{-2(r-1)H}\sum_{j=0}^{\Nt -1} \frac 12\left[ \left(\frac{j+1}{n}\right)^{2H} - \left(\frac{j}{n}\right)^{2H}\right]\\
&\le Cn^{-2(r-1)H}\frac{\Nt}{n} \le Cn^{-2(r-1)H}.
\end{align*} 

For (c), we note that $\left|\left< \partial_{j/n}, \partial_0\right>_\hten\right| = \left|\left< \partial_{j/n}, \varepsilon_{ 1/n}\right>_\hten\right|\le n^{-2H}$. Also note that by (B.1) and Cauchy-Schwarz we have $\left|\left< \partial_{j/n}, \partial_{k/n}\right>_\hten\right| \le n^{-2H}$ for any $1 \le j,k\le \Nt$.   To begin the proof, we consider the case when $1 \le k\le \Nt -1$ is fixed.  Then
\begin{align*} \sum_{j=0}^{\Nt -1} \left| \left< \partial_{\frac jn}, \partial_{\frac kn}\right>_\hten^r\right|&\le \sup_{0\le\j\le\Nt}\left\{\sup_{0\le k\le\Nt} \left|\left< \partial_{\frac jn}, \partial_{\frac kn}\right>_\hten^{r-1}\right|\right\}\sum_{j=0}^{\Nt -1} \left|\left< \partial_{\frac jn}, \partial_{\frac kn}\right>_\hten\right|\\
&\le n^{-2(r-1)H}\left( n^{-2H} + \sum_{j=1}^{k-2}\left|\left< \partial_{\frac jn}, \partial_{\frac kn}\right>_\hten\right| + \sum_{j=k-1}^{k+1}\left|\left< \partial_{\frac jn}, \partial_{\frac kn}\right>_\hten\right| + \sum_{j=k+2}^{\Nt-1}\left|\left< \partial_{\frac jn}, \partial_{\frac kn}\right>_\hten\right|\right)\\
\end{align*}
Then we use (B.2) and (B.3) to write
\begin{align*}
n^{-2(r-1)H}&\left( n^{-2H} +\sum_{j=1}^{k-2}\left|\left< \partial_{\frac jn}, \partial_{\frac kn}\right>_\hten\right| + \sum_{j=k-1}^{k+1}\left|\left< \partial_{\frac jn}, \partial_{\frac kn}\right>_\hten\right| + \sum_{j=k+2}^{\Nt-1}\left|\left< \partial_{\frac jn}, \partial_{\frac kn}\right>_\hten\right|\right)\\
 &\quad\le n^{2(r-1)H}\left( n^{-2H} +Cn^{-2H}\sum_{j=1}^{k-2} (k-j)^{2H-2} + \sum_{j=k-1}^{k+1} n^{-2H} + Cn^{-2H}\sum_{j=k+2}^{\Nt -1} (j-k)^{2H-2}\right)\\
&\quad \le Cn^{-2rH}\left( 4 + 2\sum_{m=1}^\infty m^{2H-2}\right) \le Cn^{-2rH},
\end{align*}
where we note the sum is finite because $H < 1/2$.  For the double sum result we have
\[
 \sum_{j,k=0}^{\Nt -1} \left| \left< \partial_{\frac jn}, \partial_{\frac kn}\right>_\hten^r\right|
\le \sum_{k=0}^{\Nt -1} \sup_{0\le k\le \Nt}\left\{ \sum_{j=0}^{\Nt -1}\left|\left< \partial_{\frac jn}, \partial_{\frac kn}\right>_\hten^r\right|\right\}
\le C \Nt n^{-2rH}.
\]
%\begin{align*}\sum_{j,k=0}^{\Nt -1} \left|\left< \partial_{\frac jn}, \partial_{\frac kn}\right>_\hten^r\right| &=\sum_{j=0}^{\Nt %-1} \left<\partial_{\frac jn},\partial_{\frac jn}\right>_\hten^r + 2\sum_{j=1}^{\Nt -1}\left(\left|\left< \partial_{\frac jn}, %\partial_0 \right>_\hten^r\right|+\left|\left<\partial_{\frac jn}, \partial_{\frac{j-1}{n}}\right>_\hten^r\right|\right)
% + 2\sum_{j=1}^{\Nt -1}\sum_{k=1}^{j-2}\left|\left< \partial_{\frac jn}, \partial_{\frac kn}\right>_\hten^r\right|\\
%&\le 5\Nt n^{-2rH} + 2\sup_{0\le j \le \Nt}\left\{ \sup_{0\le k \le \Nt}\left|\left<\partial_{\frac jn},\partial_{\frac %kn}\right>_\hten^{r-1}\right|\right\}\sum_{j=1}^{\Nt}\sum_{k=1}^{j-2}\left|\left< \partial_{\frac jn}, \partial_{\frac %kn}\right>_\hten\right|\\
%&\le 5\Nt n^{-2rH} +2n^{-2rH}\sum_{j=1}^{\Nt }\sum_{k=1}^{j-2} (j-k)^{2H-2}\\
%&\le 5\Nt n^{-2rH} + 2n^{-2rH}\sum_{j=1}^{\Nt }\sum_{m=1}^\infty m^{2H-2} \le C\Nt n^{-2rH} .
%\end{align*}
\end{proof}

\section{Results}
\subsection{Some results for fBm with $H > 1/14$.} 
The following proposition summarizes some known results about stochastic integrals with respect to fBm, when the integrals arise from a Riemann sum construction.  A comprehensive treatment can be found in an important paper by Gradinaru, Nourdin, Russo \& Vallois \cite{GNRV}.
\begin{proposition}
Let $g \in {\cal C}^{\infty}({\mathbb R})$, such that $g$ and its derivatives have moderate growth.  The following Riemann sums converge in probability as $n \to \infty$ to $g(B_t) - g(0)$ for the given ranges of $H$:
\begin{enumerate}[(a)]
\item Midpoint rule:  for $1/6 < H < 1/2$,
\[\sum_{j=0}^{\Nt -1} g'(\widetilde B_{\frac jn})\Delta B_{\frac jn},\]
where $\widetilde B_{\frac jn} = \frac 12 \left(B_{\frac jn} + B_{\frac{j+1}{n}}\right)$.
\item Trapezoidal rule:  For $1/6 < H < 1/2$,
\[\sum_{j=0}^{\Nt -1} \frac 12 \left( g'(B_{\frac jn}) + g'(B_{\frac{j+1}{n}})\right)\Delta B_{\frac jn}.\]
\item Simpson's rule:  For $1/10 < H < 1/2$,
\[ \sum_{j=0}^{\Nt -1} \frac 16\left(g'(B_{\frac jn}) + 4 g'(\widetilde B_{\frac jn}) + g'(B_{\frac{j+1}{n}})\right)\Delta B_{\frac jn}.\]
\item Milne's rule:  For $1/14 < H < 1/2$,
\[ \sum_{j=0}^{\Nt - 1}\frac{1}{90}\left( 7g'(B_{\frac jn}) + 32g'(B_{\frac jn} +\frac 14 \Delta B_\frac jn) + 12g'(\widetilde B_{\frac{j}{n}}) + 32g'(B_{\frac{j}{n}}+\frac 34 \Delta B_{\frac jn}) + 7g'(B_{\frac{j+1}{n}})\right)\Delta B_{\frac jn}.\]
\end{enumerate}\end{proposition}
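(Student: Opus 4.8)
The plan is to handle all four sums at once by observing that each summand is a classical quadrature rule applied \emph{in space} to the increment $g(B_{\frac{j+1}{n}})-g(B_{\frac jn})=\int_{B_{\frac jn}}^{B_{\frac{j+1}{n}}}g'(x)\,dx$. Writing $\Delta_j:=\Delta B_{\frac jn}$ and letting $\{(c_i,w_i)\}$ be the nodes and weights of the rule, so that the Riemann sum, which I denote $\Sigma_n(t)$, equals $\sum_{j=0}^{\Nt-1}\big(\sum_i w_i\,g'(B_{\frac jn}+c_i\Delta_j)\big)\Delta_j$ and recovers the midpoint, trapezoidal, Simpson, and Milne forms for the appropriate nodes, I would split
\[
 \Sigma_n(t)=\sum_{j=0}^{\Nt-1}\Big(g(B_{\frac{j+1}{n}})-g(B_{\frac jn})\Big)+\sum_{j=0}^{\Nt-1}E_j,
\]
where $E_j$ is the spatial quadrature error on the $j$-th interval. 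The first sum telescopes to $g(B_{\Nt/n})-g(0)$, which tends almost surely to $g(B_t)-g(0)$ by continuity of $B$ and $g$; so it remains to show $\sum_j E_j\to 0$ in $L^2(\Omega)$, which yields convergence in probability.

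To identify the leading order of $E_j$, I would Taylor expand $g'(B_{\frac jn}+c_i\Delta_j)$ and $g(B_{\frac{j+1}{n}})-g(B_{\frac jn})$ about $B_{\frac jn}$. The coefficient of $g^{(p+1)}(B_{\frac jn})\Delta_j^{p+1}$ in $E_j$ is a multiple of $\sum_i w_ic_i^p-\int_0^1 x^p\,dx$, i.e.\ the quadrature error on the monomial $x^p$, which vanishes for every $p$ up to the degree of exactness $d$ of the rule. Hence the first surviving term is of order $\Delta_j^{d+2}$,
\[
 E_j=\kappa\,g^{(d+2)}(B_{\frac jn})\,\Delta_j^{\,d+2}+(\text{higher order in }\Delta_j),
\]
with $\kappa$ the usual error constant. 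The degrees of exactness are $d=1$ for the midpoint and trapezoidal rules, $d=3$ for Simpson, and $d=5$ for Milne (Boole's rule), so the binding power is $m:=d+2=3,3,5,7$; for Simpson $\kappa=-\tfrac{1}{2880}$ and $g^{(d+2)}=g^{(5)}$, the very objects appearing in Theorem 3.3.

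The heart of the matter is then the weighted power sum: for $h$ of moderate growth, $\sum_j h(B_{\frac jn})\Delta_j^{m}\to 0$ in $L^2$ exactly when $H>1/(2m)$. I would expand $\Delta_j^m$ into Wick powers, $\Delta_j^m=\sum_{\ell=0}^{(m-1)/2}a_{m,\ell}\,n^{-2\ell H}\,I_{m-2\ell}\big(\partial_{\frac jn}^{\otimes(m-2\ell)}\big)$, and estimate each piece. Using $\mathbb E\big[I_p(\partial_{\frac jn}^{\otimes p})I_p(\partial_{\frac kn}^{\otimes p})\big]=p!\,\langle\partial_{\frac jn},\partial_{\frac kn}\rangle_\hten^{p}$ and Lemma 2.6(c), which bounds $\sum_{j,k}|\langle\partial_{\frac jn},\partial_{\frac kn}\rangle_\hten^{p}|$ by $C\Nt\,n^{-2pH}\sim n^{1-2pH}$, the component of order $p=m-2\ell$ has $L^2$ norm of order $n^{-2\ell H}\cdot n^{1/2-(m-2\ell)H}=n^{1/2-mH}$. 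Thus \emph{every} Wick component contributes the same order $n^{1/2-mH}$, so the sum is $O(n^{1/2-mH})$ and vanishes iff $H>1/(2m)$, giving the thresholds $1/6,1/10,1/14$. The higher-order Taylor terms carry powers $m'>m$ and norms $n^{1/2-m'H}$, hence vanish as well.

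The main obstacle is making the weighted-sum estimate rigorous when the weight $h(B_{\frac jn})$ is random and correlated with the increments, so it cannot simply be pulled out of $\mathbb E[I_p(\partial_{\frac jn}^{\otimes p})I_p(\partial_{\frac kn}^{\otimes p})]$. I would instead expand $\mathbb E\big[h(B_{\frac jn})h(B_{\frac kn})\,I_p(\partial_{\frac jn}^{\otimes p})I_p(\partial_{\frac kn}^{\otimes p})\big]$ by integration by parts (the duality relation \req{duality} together with the product formula of Lemma 2.1), moving the $p$ derivatives onto $h(B_{\frac jn})h(B_{\frac kn})$. Each derivative that lands on a weight produces a factor $\langle\partial_{\frac jn},\varepsilon_{\frac jn}\rangle_\hten$, $\langle\partial_{\frac jn},\varepsilon_{\frac kn}\rangle_\hten$, or $\langle\partial_{\frac jn},\partial_{\frac kn}\rangle_\hten$; these are controlled by properties (B.1)--(B.5) and summed with Lemma 2.6, and one checks that the dominant contribution remains of order $n^{1-2pH}$. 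The final technical point is bounding the Taylor remainders, for which the moderate-growth hypothesis on $g$ and its derivatives combined with uniform moment bounds for $B$ on $[0,T]$ supplies the needed $L^2$ control. A complete treatment of these estimates is given in \cite{GNRV}.
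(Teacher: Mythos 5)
Your overall architecture (telescoping to $g(B_{\Nt/n})-g(0)$ plus a sum of quadrature errors, then killing each weighted power sum $\sum_j h\,\Delta B_{\frac jn}^m$ via a Wick/chaos expansion, the duality relation, and the estimates of Lemma 2.6) is the same as the paper's, and your identification of the leading error order $m=d+2$ and of the Simpson constant $-1/2880$ is correct. The genuine gap is in the step where you Taylor expand about the \emph{left endpoint} $B_{\frac jn}$ and then dismiss the higher-order Taylor terms on the grounds that they carry powers $m'>m$ and have $L^2$ norms $n^{1/2-m'H}$. That bound is only valid for \emph{odd} $m'$: your own expansion $\Delta B_{\frac jn}^{m}=\sum_{\ell}a_{m,\ell}\,n^{-2\ell H}I_{m-2\ell}(\partial_{\frac jn}^{\otimes(m-2\ell)})$ has lowest chaos order $1$ precisely because $m$ is odd. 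For even $m'$ there is in addition the deterministic component $\mathbb{E}[\Delta B_{\frac jn}^{m'}]=c_{m'}n^{-m'H}$, and the corresponding piece of the error is $c_{m'}n^{-m'H}\sum_j h(B_{\frac jn})\approx c_{m'}n^{1-m'H}\int_0^t h(B_s)\,ds$, which is of order $n^{1-m'H}$ in $L^1(\Omega)$, not $n^{1/2-m'H}$. Such even-power terms really do occur in the left-endpoint expansion: for Simpson's rule the quadrature error on the monomial $x^5$ equals $-1/48\neq 0$, so $E_j$ contains a nonzero multiple of $g^{(6)}(B_{\frac jn})\Delta B_{\frac jn}^6$, whose deterministic part is of order $n^{1-6H}$ and diverges on the subrange $1/10<H\le 1/6$ of the claimed range (similarly the $\Delta B^4$ term for the midpoint and trapezoidal rules on $1/6<H\le 1/4$, and the $\Delta B^8$ term for Milne on $1/14<H\le 1/8$). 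These contributions cancel only against one another across different Taylor orders, and your argument, which treats each term as individually negligible, does not capture that cancellation.

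The paper avoids the problem by expanding symmetrically about the midpoint: it writes the increment as $g(x+h)-g(x-h)$ with $x=\widetilde B_{\frac jn}$ and $h=\frac 12\Delta B_{\frac jn}$, so that by the symmetry of the rule the error contains only the odd powers $h^5,h^7,h^9$ plus an $h^{11}$-order remainder, each weighted by a derivative of $g$ evaluated at $\widetilde B_{\frac jn}$. Every surviving term is then an odd-power weighted sum covered by Lemma 3.2 (which is stated, for exactly this reason, only for odd $r$ and with the weight at the midpoint), and each vanishes individually for $H>1/10$. Your proof becomes correct if you replace the left-endpoint expansion by this symmetric one --- all four rules in the proposition are symmetric about the midpoint, so the device applies uniformly --- and with that change your treatment of the odd-power sums is essentially the paper's Lemma 3.2.
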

Note that the `midpoint' sum of part (a) is a different construction than that leading to \req{cov_14}.  All of these results follow from Theorem 4.4 of \cite{GNRV}, in fact they are also proved there for $H \ge 1/2$.  However, here we give a different proof of part (c).  By similar techniques, results (a), (b) and (d) could also be done in this way.  This proof will contain some results that will be used in Section 3.2, and help set up the proof of Theorem 3.3.  We begin with a technical result.  The proof of Lemma 3.2 is deferred to Section 4 due to length.

\begin{lemma} 
Let $r=1, 3, 5, \dots$ and $n\ge 2$ be an integer.  Let $\phi: {\mathbb R} \to {\mathbb R}$ be a ${\cal C}^{2r}$ function such that $\phi$ and all derivatives up to order $2r$ have moderate growth, and let $\{ B_t, t\ge 0\}$ be fBm with Hurst parameter $H$.  Then for each $r$, there is a constant $C >0$ such that
\[ {\mathbb E} \left[ \left( \sum_{j=0}^{\Nt -1} \phi(\Btilde) \Delta B_{\frac jn}^r\right)^2\right] \le C\sup_{0\le j \le \Nt}\left\| \phi(\Btilde)\right\|_{{\mathbb D}^{2r,2}}^2\Nt n^{-2rH},\]
where $C$ depends on $r$ and $H$.\end{lemma}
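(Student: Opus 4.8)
The plan is to expand $\Delta B_{\frac jn}^r$ into multiple integrals, use the product formula of Lemma 2.1(a) to absorb $\phi(\Btilde)$ into Skorohod integrals, and then estimate the $L^2$ norm of the resulting finite sum using the contraction estimates of Lemma 2.3. First I would record the chaos decomposition of the power. Since $r$ is odd and $\|\partial_{\frac jn}\|_\hten^2 = n^{-2H}$, applying the Hermite expansion of the monomial $x^r$ together with \req{Hmap} and the identity $I_q(h^{\otimes q}) = \delta^q(h^{\otimes q})$ for deterministic $h$ gives
\[ \Delta B_{\frac jn}^r = \sum_{\substack{q=1\\ q\ \mathrm{odd}}}^r c_{r,q}\, n^{-(r-q)H}\, \delta^q\!\left(\partial_{\frac jn}^{\otimes q}\right), \]
where $c_{r,q}=\frac{r!}{2^{(r-q)/2}\,((r-q)/2)!\,q!}$ are the Hermite coefficients. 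Only odd chaoses $q=1,3,\dots,r$ appear; the top one $q=r$ carries coefficient $1$ and no power of $n$, and will be the dominant contribution.

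Next, since $D^a\phi(\Btilde) = \phi^{(a)}(\Btilde)\,\widetilde\varepsilon_{\frac jn}^{\otimes a}$, Lemma 2.1(a) applied with $F=\phi(\Btilde)$ and $u=\partial_{\frac jn}^{\otimes q}$ yields
\[ \phi(\Btilde)\,\delta^q\!\left(\partial_{\frac jn}^{\otimes q}\right) = \sum_{b=0}^{q}\binom qb \left<\widetilde\varepsilon_{\frac jn},\partial_{\frac jn}\right>_\hten^{\,q-b}\,\delta^b\!\left(\phi^{(q-b)}(\Btilde)\,\partial_{\frac jn}^{\otimes b}\right). \]
Summing over $j$ and $q$ and regrouping, $S:=\sum_{j=0}^{\Nt-1}\phi(\Btilde)\Delta B_{\frac jn}^r$ becomes a finite linear combination, with scalar coefficients $c_{r,q}\binom qb n^{-(r-q)H}$, of the terms $T_{q,b}=\delta^b(V_{q,b})$ where $V_{q,b}=\sum_{j}\left<\widetilde\varepsilon_{\frac jn},\partial_{\frac jn}\right>_\hten^{\,q-b}\phi^{(q-b)}(\Btilde)\,\partial_{\frac jn}^{\otimes b}$ is an $\hten^{\otimes b}$-valued random variable. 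The number of such terms depends only on $r$.

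To estimate $\mathbb E[S^2]$ I would expand it as a double sum over pairs of these terms and, for each pair, apply the duality relation \req{duality} together with the commutation formula Lemma 2.1(b) to move all derivatives onto one factor. (Alternatively, Meyer's inequality, Lemma 2.1(c), bounds $\|\delta^b(V_{q,b})\|_{L^2}$ by $C\|V_{q,b}\|_{\mathbb D^{b,2}(\hten^{\otimes b})}$ and yields the per-term estimate directly.) Either route reduces the whole computation to double sums over $j,k$ of products of the inner products $\left<\partial_{\frac jn},\partial_{\frac kn}\right>_\hten$, $\left<\widetilde\varepsilon_{\frac jn},\partial_{\frac jn}\right>_\hten$ and $\left<\widetilde\varepsilon_{\frac jn},\widetilde\varepsilon_{\frac kn}\right>_\hten$, multiplied by expectations $\mathbb E\!\left[\phi^{(m)}(\Btilde)\,\phi^{(m')}(\widetilde B_{\frac kn})\right]$. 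Because the duality on a cross term between the smallest and largest $b$ differentiates a single factor $\phi$ up to $(q-b)+b'\le 2r$ times, the indices satisfy $m,m'\le 2r$; these expectations are then bounded, via Cauchy--Schwarz, the moderate growth hypothesis, and $\|\widetilde\varepsilon_{\frac jn}\|_\hten\le T^H$, by $\sup_{0\le j\le \Nt}\|\phi(\Btilde)\|_{\mathbb D^{2r,2}}^2$. This is exactly where the derivative order $2r$ in the hypothesis and in the right-hand side enters.

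The crux, and the main obstacle, is the bookkeeping of the inner-product sums, which must be shown to total $C\,\Nt\,n^{-2rH}$ for every term. A factor $\left<\partial_{\frac jn},\partial_{\frac kn}\right>_\hten^{\,b}$ summed in \emph{both} indices contributes $\Nt\,n^{-2bH}$ by Lemma 2.3(c); combined with the coefficient $n^{-2(r-q)H}$ and, in the resonant case $b=q$, no further factors, this saturates exactly at $\Nt\,n^{-2rH}$, with the top term $q=b=r$ the only one attaining the rate. Every remaining factor is of position type, $\left<\widetilde\varepsilon_{\frac jn},\partial_{\frac jn}\right>_\hten$, whose powers sum by the telescoping estimate Lemma 2.3(b) to $Cn^{-2(q-b-1)H}$ rather than growing like $\Nt$; thus each such factor trades a power of $\Nt$ for extra decay $n^{-2H}$, and since $H<1/2$ this strictly improves the bound. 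A short exponent check of the two cases $b=q$ and $b<q$ then confirms that every term is $\le C\,\Nt\,n^{-2rH}\sup_j\|\phi(\Btilde)\|_{\mathbb D^{2r,2}}^2$; Minkowski's inequality over the finitely many terms completes the proof. Distinguishing the single increment--increment contraction that yields the factor $\Nt$ from the position--increment contractions that do not is precisely what produces $\Nt$, rather than $\Nt^2$, on the right-hand side.
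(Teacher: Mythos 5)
Your proposal is correct, and while it opens exactly as the paper does --- with the Hermite expansion $\Delta B_{j/n}^r=\sum_{p}C(r,p)\,n^{-2pH}\,\delta^{r-2p}\bigl(\partial_{j/n}^{\otimes(r-2p)}\bigr)$ --- the middle of your argument runs along a genuinely different track. The paper keeps the weight $\phi(\Btilde)$ \emph{outside} the divergences, expands the square as a double sum over $j,k$, applies the product formula of Lemma 2.1(d) to the pair of deterministic-kernel integrals, and then splits on the contraction order $z$: for $z\ge1$ it bounds the surviving divergence in $L^2$ and sums $\langle\partial_{j/n},\partial_{k/n}\rangle_\hten^z$ by Lemma 2.6(c), while for $z=0$ it uses the duality \req{duality} to convert the divergence into derivatives $D^aY_j\otimes D^bY_k$ of the weights and extracts one cross pairing $\langle\widetilde\varepsilon_{k/n},\partial_{j/n}\rangle_\hten$ to be summed by Lemma 2.6(a). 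You instead push the weight \emph{inside} each divergence with Lemma 2.1(a), obtaining kernels $V_{q,b}$ carrying the scalar factors $\langle\widetilde\varepsilon_{j/n},\partial_{j/n}\rangle_\hten^{q-b}$, and then control each $\|\delta^b(V_{q,b})\|_{L^2}$ by Meyer's inequality and assemble with the triangle inequality --- which is precisely the mechanism the paper deploys later in the proof of Lemma 3.5, transplanted here. Your route avoids the product formula and the $z=0$ case analysis entirely, and in fact only ever differentiates $\phi$ up to order $q\le r$, so it needs less regularity than the hypothesis supplies; the paper's route avoids Meyer's inequality and keeps the explicit $j,k$ double sum, which is the form it reuses in Section 3.4. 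The final bookkeeping is identical in both: one increment--increment contraction summed over both indices yields the single factor $\Nt\,n^{-2qH}$, while position--increment factors are summed without producing $\Nt$. Three small corrections: your citations of ``Lemma 2.3'' should read Lemma 2.6 (Theorem 2.3 is the convergence theorem); every diagonal term $b=q$, not only $q=b=r$, attains the rate $\Nt\,n^{-2rH}$ exactly; and of your two suggested routes for $\mathbb{E}[S^2]$, the parenthetical one (Meyer per term plus Minkowski) is the one that closes cleanly --- expanding all cross terms $\mathbb{E}[T_{q,b}T_{q',b'}]$ would drag you back into essentially the paper's duality computation.
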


Now for the convergence of the Simpson's rule sum.  We begin with some elementary results from the calculus of deterministic functions.  For $x, h \in {\mathbb R}$ and a ${\cal C}^\infty$ function $g$, we have the following integral form for the Simpson's rule sum:
\begin{multline*}
g(x+h) - g(x-h) = \int_{-h}^h g'(x+u)~du\\
=  \frac h3 \left( g'(x-h) + 4g'(x) + g'(x+h)\right) + \frac 16 \int_0^h \left( g^{(4)}(x-u) - g^{(4)}(x+u)\right)~u(h-u)^2 du.\end{multline*}
See Talman \cite{Talman} for a nice discussion of the Simpson's rule error term.  Next, we consider a Taylor expansion of order 7 for $g^{(4)}$:
\begin{align*}
g^{(4)}(x+u) - g^{(4)}(x) &= \sum_{\ell =1}^6 \frac{g^{(4+\ell)}(x)}{\ell!}u^\ell \, + \, \frac{g^{(11)}(\xi)}{7!}u^7; \text{ and}\\
g^{(4)}(x) - g^{(4)}(x-u) &= \sum_{\ell =1}^6 \frac{(-1)^{\ell +1} g^{(4+\ell)}(x)}{\ell!}u^\ell \, + \, \frac{g^{(11)}(\eta)}{7!}u^7\end{align*}
Adding the above equations, we obtain
\[ g^{(4)}(x+u) - g^{(4)}(x-u) = 2\sum_{\nu = 1}^3 \frac{g^{(4+2\nu -1)}(x)}{(2\nu-1)!}u^{2\nu -1} ~+~\frac{g^{(11)}(\xi) + g^{(11)}(\eta)}{7!}u^7.\]
It follows that we can write
\begin{align*}
g(x+h) - g(x-h) &= \frac h3 \left( g'(x-h) + 4g'(x) + g'(x+h)\right) - \frac 13 \sum_{\nu = 1}^3 \frac{g^{(4+2\nu -1)}(x)}{(2\nu-1)!}\int_0^h u^{2\nu}(h-u)^2du\\
&\qquad\qquad - \frac{g^{(11)}(\xi)+g^{(11)}(\eta)}{(6)(7!)}\int_0^h u^8(h-u)^2 du\\
&= \frac h3 \left( g'(x-h) + 4g'(x) + g'(x+h)\right) - \frac{ g^{5)}(x)}{90}h^5 - A_7 g^{(7)}(x)h^7 - A_9 g^{(9)}(x)h^9\\
&\qquad\quad - \frac{1}{6(7!)}\int_0^h \left[ g^{(11)}(\xi) + g^{(11)}(\eta)\right]u^8( h-u)^2 du,\tag{*}\end{align*}
where $A_7, A_9$ are positive constants, and $\xi = \xi(u) \in [x-h, x+h]$, with similar for $\eta$.
With this relation, we now return to Proposition 3.1.c.  We begin with the telescoping series,
\begin{align*}
g(B_t) - g(0) &= \sum_{j=0}^{\Nt -1} \left( g(B_{\frac{j+1}{n}}) - g(B_{\frac jn})\right) + \left( g(B_t) - g(B_{\frac{\Nt}{n}})\right)\\
&= \sum_{j=0}^{\Nt -1} \int_{B_{j/n}}^{B_{(j+1)/n}} g'(u)~du +\left( g(B_t) - g(B_{\frac{\Nt}{n}})\right).\end{align*}
By continuity, the term $\left(g(B_t) - g(B_{\Nt / n})\right)$ tends to zero uniformly on compacts in probability (ucp) as $n \to \infty$, and may be neglected.  For each integral term, we use (*) with $x = \widetilde B_{j/n}$ and $h = \frac 12 \Delta B_{j/n}$ to obtain
\begin{multline}\label{simps_taylor}
 \sum_{j=0}^{\Nt -1} \int_{B_{j/n}}^{B_{(j+1)/n}} g'(u)~du =  \sum_{j=0}^{\Nt -1}\frac 16 \left( g'(B_{\frac jn}) + 4g'(\Btilde) + g'(B_{\frac{j+1}{n}})\right) - \frac{1}{2^5~90}\sum_{j=0}^{\Nt -1} g^{(5)}(\Btilde)\Delta B_{\frac jn}^5\\
 - A_7 \sum_{j=0}^{\Nt -1} g^{(7)}(\Btilde) \Delta B_{\frac jn}^7  -  A_9 \sum_{j=0}^{\Nt -1} g^{(9)}(\Btilde) \Delta B_{\frac jn}^9\\
 - \frac{1}{6(7!)} \sum_{j=0}^{\Nt -1} \int_0^{\Delta B_{j/n}} \left( g^{(11)}(\xi) + g^{(11)}(\eta)\right)u^8( \Delta B_{\frac jn}-u)^2 du.\end{multline}
By Lemma 3.2, the terms
\[ \sum_{j=0}^{\Nt -1} \frac{g^{(5)}(\Btilde)}{2880}\Delta B_{\frac jn}^5, \quad A_7 \sum_{j=0}^{\Nt -1} g^{(7)}(\Btilde) \Delta B_{\frac jn}^7, \quad   A_9 \sum_{j=0}^{\Nt -1} g^{(9)}(\Btilde) \Delta B_{\frac jn}^9 \]
all tend to zero in $L^2(\Omega)$ as $n \to \infty$.  For the last term, we have the $L^2(\Omega)$ estimate
\begin{multline*}
{\mathbb E}\left[\left( \sum_{j=0}^{\Nt -1} \int_0^{\Delta B_{j/n}} \left[ g^{(11)}(\xi) + g^{(11)}(\eta)\right]u^8( \Delta B_{\frac jn}-u)^2 du\right)^2\right]\\ \le C\left( {\mathbb E}\left[\sup_{s\in [0,t]}| g^{(11)}(B_s)^4|\right]\right)^{\frac 12} \left(\sum_{j=0}^{\Nt -1} \| \Delta B_{\frac jn}^{11}\|_{L^4(\Omega)}\right)^2
\le C\Nt^2 n^{-22H} \le Cn^{-2H},\end{multline*}
because $\| \Delta B_{j/n}^{11}\|_{L^4(\Omega)} \le C\left({\mathbb E} | \Delta B_{j/n}^2|\right)^{\frac{11}{2}} \le Cn^{-11H}$ by (B.1) and the Gaussian moments formula.  Thus, we have \[{\mathbb P}\lim_{n\to\infty} \sum_{j=0}^{\Nt -1} \frac 16 \left( g'(B_{\frac jn}) + 4g'(\Btilde)+g'(B_{\frac{j+1}{n}})\right)\Delta B_{\frac jn} = f(B_t) - f(0),\]
when $H > 1/10$, and Proposition 3.1.c is proved.$\qquad\square$

\medskip
As a converse to Proposition 3.1.c (and parts (a), (b) and (d) by similar computation), let $g(x) =f(x)$ be a polynomial such that $g^{(5)}=f^{(5)} = 1$.  Then
\[ S^S_n(t) = f(B_{\frac{\Nt}{n}}) - f(0) +\frac{1}{2880} \sum_{j=0}^{\Nt -1}\Delta B_{\frac jn}^5.\]
By Theorem 10 of Nualart and Ortiz-Latorre \cite{NuOrtiz}, the sequence $\left( B_t, \sum_{j=0}^{\Nt -1}\Delta B_{ j/n}^5\right)$ converges in distribution to $(B_t, W)$, where $W$ is a Gaussian random variable, independent of $B$.  It follows that $S^S_n(t)$ does not, in general, converge in probability when $H \le 1/10$.  For the critical case $H=1/10$, we have the following theorem, which generalizes the result of Theorem 10 of \cite{NuOrtiz} for this particular value of $H$.
%%%%%%%%%%%%%%%%%%%%%%%%%%%%%

\subsection{Main result: fBm with $H=1/10$.}%%%%%%%%%%%%%%%%
Throughout the rest of this paper, we will assume that $f: {\mathbb R}\to {\mathbb R}$ is a ${\cal C}^\infty$ function, such that $f$ and all derivatives satisfy moderate growth conditions. Note that this implies ${\mathbb E}\left[ \sup_{t\in[0,T]} \left| f^{(n)}(B_t)\right|^p\right] < \infty$ for all $n=0, 1,2, \dots$ and $1 \le p < \infty$.

\begin{theorem}%%%%%%%%Main Theorem 3.3
Let $f:{\mathbb R}\to{\mathbb R}$ be a ${\cal C}^\infty$ function such that $f$ and its derivatives have moderate growth conditions, and let $\{ B_t, t\ge 0\}$ be a fractional Brownian motion with $H = 1/10$.  For $t\ge 0$ and integers $n \ge 2$, Define
\[ S_n^S(t) = \sum_{j=0}^{\lfloor nt \rfloor -1} \frac 16\left(f'(B_{\frac{j}{n}}) + 4f'\left((B_{\frac jn}+B_{\frac{j+1}{n}})/2\right)+f'(B_{\frac{j+1}{n}})\right)\left(B_{\frac{j+1}{n}}-B_{\frac jn}\right).\]
Then as $n \to \infty$
\[ \left( B_t, S^S_n(t)\right) \law \left( B_t, f(B_t) - f(0) +\frac{\beta}{2^5\cdot90} \int_0^t f^{(5)}(B_s)~dW_s\right),\]
where $W = \{W_t, t\ge 0\}$ is a Brownian motion, independent of $B$, and 
\[ \beta = \sqrt{ (5!)2^{-5}\kappa_5 +75\kappa_3},\;\text{ for }\; \kappa_5 = \sum_{p\in{\mathbb Z}} \left( (p+1)^{\frac 15} - 2p^{\frac 15}+(p-1)^{\frac 15}\right)^5,\; \text{and}\]
\[\kappa_3 = \sum_{p\in{\mathbb Z}} \left( (p+1)^{\frac 15} - 2p^{\frac 15}+(p-1)^{\frac 15}\right)^3.\]
Consequently, 
\[ f(B_t) \stackrel{\cal L}{=} f(0) + \int_0^t f'(B_s)~d^SB_s - \frac{\beta}{2880} \int_0^t f^{(5)}(B_s)~dW_s,\]
where $\int_0^t f'(B_s)~d^SB_s$ denotes the weak limit of the `Simpson's rule' sum $S_n^S(t)$.
\end{theorem}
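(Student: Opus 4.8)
The plan is to peel off from the Simpson sum the one weighted power sum that survives at the critical exponent and to show it converges conditionally to a Gaussian via Theorem~2.3. Starting from the deterministic expansion \req{simps_taylor} and the telescoping identity preceding it, I would write
\[ S_n^S(t) = f(B_{\frac{\Nt}{n}}) - f(0) + \frac{1}{2880}\sum_{j=0}^{\Nt -1} f^{(5)}(\Btilde)\,\Delta B_{\frac jn}^5 + R_n, \]
where $R_n$ gathers the $A_7$, $A_9$ and integral--remainder terms. By Lemma~3.2 with $r=7,9$ (and the $L^2$ bound already obtained for the integral remainder), each piece of $R_n$ is $O(\Nt\,n^{-2rH}) = O(n^{1-2rH})$, which vanishes at $H=1/10$ for $r\ge 7$; the same lemma with $r=5$ gives $O(n^{1-10H})=O(1)$, confirming that $\Phi_n := \sum_{j=0}^{\Nt-1} f^{(5)}(\Btilde)\,\Delta B_{\frac jn}^5$ is exactly the surviving term. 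Since $f(B_{\Nt/n})\to f(B_t)$ in probability by continuity, it remains only to identify the conditional limit law of $\tfrac{1}{2880}\Phi_n$.

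I would then move $\Phi_n$ into Wiener chaos. With $\sigma^2 = n^{-2H}$, the monic identity $x^5 = H_5(x) + 10 H_3(x) + 15 H_1(x)$ and $\delta^q(\partial_{\frac jn}^{\otimes q}) = I_q(\partial_{\frac jn}^{\otimes q})$ give
\[ \Delta B_{\frac jn}^5 = \delta^5(\partial_{\frac jn}^{\otimes 5}) + 10\,n^{-2H}\,\delta^3(\partial_{\frac jn}^{\otimes 3}) + 15\,n^{-4H}\,\delta(\partial_{\frac jn}). \]
Using $D^r f^{(5)}(\Btilde) = f^{(5+r)}(\Btilde)\,\widetilde\varepsilon_{\frac jn}^{\otimes r}$, hence $\langle D^r f^{(5)}(\Btilde), \partial_{\frac jn}^{\otimes r}\rangle_\hten = f^{(5+r)}(\Btilde)\,\langle \widetilde\varepsilon_{\frac jn}, \partial_{\frac jn}\rangle_\hten^{\,r}$, I would apply Lemma~2.1(a) to carry each factor $f^{(5)}(\Btilde)$ inside the divergences, and then regroup by order to obtain $\Phi_n = \delta^5(u_n) + \delta^3(v_n) + (\text{remainder})$ with
\[ u_n = \sum_{j=0}^{\Nt -1} f^{(5)}(\Btilde)\,\partial_{\frac jn}^{\otimes 5}, \qquad v_n = 10\,n^{-2H}\sum_{j=0}^{\Nt -1} f^{(5)}(\Btilde)\,\partial_{\frac jn}^{\otimes 3}. \]
The remainder consists of divergences of orders $4,2,0$ and $1$, all carrying either the factor $15\,n^{-4H}$ or extra powers of $\langle \widetilde\varepsilon_{\frac jn}, \partial_{\frac jn}\rangle_\hten = O(n^{-2H})$; using the estimates of Lemma~2.6 together with the cancellation $\sum_p(|p+1|^{2H}-2|p|^{2H}+|p-1|^{2H})=0$ (which keeps the first--chaos sum $\sum_j f^{(5)}(\Btilde)\Delta B_{\frac jn}$ far below the crude $n^{2H}$ bound), each such term tends to zero in $L^2(\Omega)$. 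This leaves the two--component vector $(\delta^5(u_n), \delta^3(v_n))$ to drive the limit.

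It then remains to check the hypotheses of Theorem~2.3 in the two--component form of Remark~2.5. Condition~(b) is where the constant $\beta$ appears: writing the increment covariance as the second difference $\langle \partial_{\frac jn}, \partial_{\frac kn}\rangle_\hten = \tfrac12 n^{-2H}(|p+1|^{2H} - 2|p|^{2H} + |p-1|^{2H})$ with $p=k-j$ and $2H = 1/5$, the leading part of $\langle u_n, D^5\delta^5(u_n)\rangle_{\hten^{\otimes 5}}$ is $5!\,\|u_n\|_{\hten^{\otimes 5}}^2 = 5!\sum_{j,k} f^{(5)}(\widetilde B_{\frac jn}) f^{(5)}(\widetilde B_{\frac kn})\langle \partial_{\frac jn}, \partial_{\frac kn}\rangle_\hten^5$, which by a Riemann--sum argument (fixing the gap $p$, summing over $p$, and using $\tfrac1n\sum_j \to \int_0^t$) converges in $L^1$ to $s_5^2 = 5!\,2^{-5}\kappa_5 \int_0^t (f^{(5)}(B_s))^2\,ds$; the analogue for $v_n$ yields $s_3^2 = 75\,\kappa_3\int_0^t (f^{(5)}(B_s))^2\,ds$, while the cross terms $\langle u_n, D^5\delta^3(v_n)\rangle_{\hten^{\otimes 5}}$ and $\langle v_n, D^3\delta^5(u_n)\rangle_{\hten^{\otimes 3}}$ vanish because the two chaoses decouple under the scaling. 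Granting condition~(a), Theorem~2.3 then gives stable convergence, given $B$, of $(\delta^5(u_n),\delta^3(v_n))$ to conditionally independent Gaussians $\mathcal N(0,s_5^2)$ and $\mathcal N(0,s_3^2)$; their sum is conditionally $\mathcal N(0, s_5^2 + s_3^2)$ with $s_5^2 + s_3^2 = \beta^2\int_0^t (f^{(5)}(B_s))^2\,ds$, which is exactly the conditional law of $\beta\int_0^t f^{(5)}(B_s)\,dW_s$ for an independent Brownian motion $W$. Combining this with $f(B_{\Nt/n})\to f(B_t)$ and $R_n\to 0$ yields the asserted joint convergence of $(B_t, S_n^S(t))$, and the change--of--variable formula follows.

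The main obstacle is the verification of condition~(a): one must show that every contraction of $u_n$ or $v_n$ against a tensor product $\bigotimes_\ell D^{a_\ell}\delta^{q}(\cdot)$ and an arbitrary $h$ converges to zero in $L^1(\Omega)$. This reduces, after bounding the Malliavin derivatives by \mbox{Lemma~2.1(b)}, to a power count of the inner products $\langle \partial_{\frac jn}, \partial_{\frac kn}\rangle_\hten$, $\langle \partial_{\frac jn}, \widetilde\varepsilon_{\frac kn}\rangle_\hten$ and $\langle \partial_{\frac jn}, \varepsilon_s\rangle_\hten$ through Lemma~2.6, checking that at $H=1/10$ the resulting exponent is strictly negative for each admissible choice of the $a_\ell$. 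This bookkeeping, together with the verification that the order $4,2,0,1$ remainders vanish, is the long and technical part deferred to Section~4.
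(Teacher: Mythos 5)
Your overall architecture is exactly the paper's: the same Simpson/Taylor expansion isolating $\sum_j f^{(5)}(\Btilde)\Delta B_{\frac jn}^5$, the same Hermite decomposition $x^5 = H_5(x)+10H_3(x)+15x$ leading to the same pair $u_n, v_n$, the same two-component application of Theorem~2.3 via Remark~2.5, and the same limiting variances $\frac{5!}{2^5}\kappa_5\int_0^t f^{(5)}(B_s)^2\,ds$ and $75\kappa_3\int_0^t f^{(5)}(B_s)^2\,ds$. There is, however, one step where your stated justification does not close: the first-chaos term $15n^{-4H}\sum_{j} f^{(5)}(\Btilde)\Delta B_{\frac jn}$. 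You dispose of it by invoking Lemma~2.6 ``together with the cancellation $\sum_p(|p+1|^{2H}-2|p|^{2H}+|p-1|^{2H})=0$,'' but that unweighted telescoping identity cannot be applied directly once the random weights $f^{(5)}(\Btilde)$ are present: the relevant quantity is the double sum $\sum_{j,k}{\mathbb E}\bigl[f^{(5)}(\Btilde)f^{(5)}(\widetilde B_{\frac kn})\Delta B_{\frac jn}\Delta B_{\frac kn}\bigr]$, and the best absolute-value bound available from Lemma~2.6.c, namely $\sum_{j,k}|\langle\partial_{\frac jn},\partial_{\frac kn}\rangle_\hten|\le C\Nt n^{-2H}$, yields $n^{-8H}\cdot n^{1-2H}=n^{1-10H}=O(1)$ at $H=1/10$ --- exactly the critical order, not $o(1)$. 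So this term sits precisely on the borderline and needs a genuine additional idea, not just power counting.

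The paper supplies that idea in Lemma~3.4: a two-sided Taylor expansion of $f^{(4)}$ about $\Btilde$ of order $7$, which (after the odd/even cancellation) rewrites $f^{(5)}(\Btilde)\Delta B_{\frac jn}$ as the telescoping increment $f^{(4)}(B_{\frac{j+1}{n}})-f^{(4)}(B_{\frac jn})$ minus weighted power variations of orders $3,5,7$; the telescoping part sums to $f^{(4)}(B_{\Nt/n})-f^{(4)}(0)=O(1)$ in $L^2$ (hence $O(n^{-4H})$ after the prefactor), and the remaining power variations are killed by Lemma~3.2 and by (B.1). Your route could alternatively be completed by an explicit weight-decoupling argument --- writing ${\mathbb E}[f_jf_k]={\mathbb E}[f_j^2]+{\mathbb E}[f_j(f_k-f_j)]$, exploiting $\sum_k\langle\partial_{\frac jn},\partial_{\frac kn}\rangle_\hten=\langle\partial_{\frac jn},\varepsilon_{\Nt/n}\rangle_\hten$ for the first piece and the $H$-H\"older continuity of $f^{(5)}(B)$ for the second, plus the sharper bound (B.5) for the derivative cross-terms arising from $\delta$ --- but as written the appeal to the unweighted cancellation is a gap. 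Everything else in your proposal (conditions (a) and (b), the cross terms, the identification of $\beta^2=s_5^2+s_3^2$, and the passage to the It\^o integral) matches the paper's proof.
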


The rest of this section is given to proof of Theorem 3.3, and follows in Sections 3.3 - 3.5.  Following the telescoping series argument given in the proof of Proposition 3.1.c (see \req{simps_taylor}), we can write
\begin{multline*}
f(B_t) - f(0) = S^S_n(t) - \frac{1}{2^5~90}\sum_{j=0}^{\Nt -1} f^{(5)}(\Btilde)\Delta B_{\frac jn}^5
 - A_7 \sum_{j=0}^{\Nt -1} f^{(7)}(\Btilde) \Delta B_{\frac jn}^7  -  A_9 \sum_{j=0}^{\Nt -1} f^{(9)}(\Btilde) \Delta B_{\frac jn}^9\\
 - \frac{1}{6(7!)} \sum_{j=0}^{\Nt -1} \int_0^{\Delta B_{j/n}} \left( f^{(11)}(\xi) + f^{(11)}(\eta)\right)u^8( \Delta B_{\frac jn}-u)^2 du +\left( f(B_t) - f(B_{\frac{\Nt}{n}})\right).\end{multline*}
As in the proof of Proposition 3.1.c, for $H = 1/10$ it follows from Lemma 3.2 that the terms including $A_7,~A_9$ and the integral term all tend to zero in $L^2(\Omega)$ as $n\to\infty$, and the term $\left(f(B_t) - f(B_{\Nt /n})\right)$ also tends to zero ucp as $n\to\infty$.  The main task to prove Theorem 3.3, then, is to show convergence in law of the error term
\beq{simps_err} \sum_{j=0}^{\lfloor nt \rfloor -1} f^{(5)}(\Btilde) \Delta B_{\frac jn}^5.\eeq

\medskip
\subsection{Malliavin calculus representation.}%%%%Step 2
In order to apply our convergence theorem (Theorem 2.3), we wish to find a Malliavin calculus representation for the term \req{simps_err}.  Consider the Hermite polynomial identity
$H_5 (x)  = x^5 -10H_3(x) - 15x.$ 
Taking $x = \Delta B_{j/n}/\| \Delta B_{j/n}\|_{L^2(\Omega)} = n^{H}\Delta B_{j/n}$, we have
\[ n^{5H}\Delta B_{\frac jn}^5 = H_5 (n^{H} \Delta B_{\frac jn}) + 10H_3(n^{H}\Delta B_{\frac jn}) + 15n^{H}\Delta B_{\frac jn}.\]Using \req{Hmap}, this gives
\begin{multline*} \sum_{j=0}^{\Nt -1}f^{(5)}(\widetilde B_{\frac jn}) \Delta B_{\frac jn}^5 = \sum_{j=0}^{\Nt -1}f^{(5)}(\widetilde B_{\frac jn})\delta^5 (\partial_{\frac jn}^{\otimes 5})\\+ 10n^{-2H}\sum_{j=0}^{\Nt -1}f^{(5)}(\widetilde B_{\frac jn})\delta^3(\partial_{\frac jn}^{\otimes 3}) + 15n^{-4H}\sum_{j=0}^{\Nt -1}f^{(5)}(\widetilde B_{\frac jn})\Delta B_{\frac jn}.\end{multline*}
We first show that the last term tends to zero in $L^1(\Omega)$.
\begin{lemma}  Under the assumptions of Theorem 3.3, there is a constant $C>0$ such that
\[{\mathbb E} \left[\left(  n^{-4H} \sum_{j=0}^{\Nt -1} f^{(5)}(\Btilde) \Delta B_{\frac jn} \right)^2\right] \le Cn^{-2H}.\]
\end{lemma}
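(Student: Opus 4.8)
The plan is to bound the second moment by expanding the square and estimating the resulting double sum of covariances. Writing $a_j := f^{(5)}(\widetilde B_{j/n})$, we have
\[
{\mathbb E}\left[\left( n^{-4H}\sum_{j=0}^{\Nt-1} a_j\,\Delta B_{\frac jn}\right)^2\right]
= n^{-8H}\sum_{j,k=0}^{\Nt-1} {\mathbb E}\left[ a_j a_k\,\Delta B_{\frac jn}\Delta B_{\frac kn}\right].
\]
Since $f^{(5)}$ has moderate growth, each factor $a_j = f^{(5)}(\widetilde B_{j/n})$ is bounded in $L^p(\Omega)$ uniformly in $j$ and $n$ (by the remark at the start of Section 3.2), so by Cauchy–Schwarz ${\mathbb E}[a_ja_k\,\Delta B_{\frac jn}\Delta B_{\frac kn}]$ is controlled by $C\,\bigl|{\mathbb E}[\Delta B_{\frac jn}\Delta B_{\frac kn}]\bigr|$ up to the $L^4$ norms of the increments — but a crude Cauchy–Schwarz on the whole product will not suffice to extract enough decay, so instead I would first peel off the $a_ja_k$ factor and use the covariance structure directly. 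The cleanest route is to apply Lemma 2.6(c) with $r=1$: the diagonal-type estimate $\sum_{j,k}|\langle\partial_{j/n},\partial_{k/n}\rangle_{\hten}| \le C\Nt\, n^{-2H}$ gives, after bounding $\sup_{j,k}\|a_ja_k\|_{L^1}\le C$,
\[
n^{-8H}\sum_{j,k=0}^{\Nt-1} \bigl|{\mathbb E}[a_ja_k]\bigr|\,\bigl|\langle\partial_{\frac jn},\partial_{\frac kn}\rangle_{\hten}\bigr|
\le C\, n^{-8H}\cdot \Nt\, n^{-2H}.
\]

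The subtlety is that ${\mathbb E}[a_ja_k\,\Delta B_{\frac jn}\Delta B_{\frac kn}]$ does not factor as ${\mathbb E}[a_ja_k]\cdot{\mathbb E}[\Delta B_{\frac jn}\Delta B_{\frac kn}]$ because $a_j$ and the increments are correlated. To handle this properly I would use the Malliavin integration-by-parts (duality, equation \req{duality}) to rewrite ${\mathbb E}[a_ja_k\,\Delta B_{\frac jn}] = {\mathbb E}[a_ja_k\,\delta(\partial_{j/n})]$ and distribute the derivative $D$ across the product $a_ja_k$ via Lemma 2.5(a). Each resulting term produces an inner product of the form $\langle \partial_{j/n}, \widetilde\varepsilon_{\cdot}\rangle_{\hten}$ (from $Da_\ell = \tfrac12 f^{(6)}(\widetilde B_{\cdot})\,\widetilde\varepsilon_{\cdot}$) or a factor $\langle\partial_{j/n},\partial_{k/n}\rangle_{\hten}$ together with an integrated version of the remaining increment. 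Summing these contributions over $j,k$ and invoking the three estimates of Lemma 2.6 — part (c) for the increment–increment pairings and parts (a),(b) for the $\langle\partial_{j/n},\varepsilon_s\rangle$ and $\langle\partial_{j/n},\widetilde\varepsilon_{j/n}\rangle$ pairings — yields a bound of order $\Nt\, n^{-2H}$ (or better) for $\sum_{j,k}{\mathbb E}[a_ja_k\,\Delta B_{\frac jn}\Delta B_{\frac kn}]$.

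Combining, and using $H=1/10$ so that $n^{-8H}\cdot\Nt\,n^{-2H} = n^{-10H}\Nt \le C\,t\,n\cdot n^{-1} = Ct$ — wait, this must be recomputed carefully: with $\Nt \le nt$ one gets $n^{-8H}\cdot nt\cdot n^{-2H} = t\,n^{\,1-10H}$, and since $H=1/10$ the exponent is $1-10H = 0$, giving a bound of order $1$, which is not yet $n^{-2H}$. The extra decay must therefore come from the cancellation in the integration-by-parts terms (the genuine increment pairings carry the sharper off-diagonal decay of (B.3), and the diagonal $j=k$ term alone contributes $n^{-8H}\cdot\Nt\cdot n^{-2H}$ but the off-diagonal covariances are summable, saving a full factor of $\Nt$). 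I expect the main obstacle to be exactly this bookkeeping: showing that after integration by parts the dominant contribution is the \emph{diagonal} sum $\sum_j {\mathbb E}[a_j^2\,\Delta B_{\frac jn}^2] = \sum_j {\mathbb E}[a_j^2]\,n^{-2H} \le C\,\Nt\,n^{-2H}$, so that the full estimate reads $n^{-8H}\cdot C\Nt\,n^{-2H} = C\,t\,n^{\,1-10H}\cdot n^{-2H}\cdot n^{2H} $ — i.e. one must verify that $n^{-8H}\cdot\Nt\,n^{-2H}\le Cn^{-2H}$ reduces, via $\Nt\,n^{-8H}\le Cn^{1-8H}$ and $H=1/10$, to $Cn^{\,2/10}\cdot n^{-2H}=Cn^{-2H}$ only after the off-diagonal terms are shown summable. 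The careful tracking of these powers of $n$, and confirming that the off-diagonal covariance decay from (B.3) removes the spurious factor of $\Nt$, is the crux of the argument; the remaining steps are routine applications of Lemma 2.5 and Lemma 2.6.
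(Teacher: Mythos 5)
There is a genuine gap here, and you have in fact put your finger on it yourself without resolving it. Expanding the square and bounding the covariances ${\mathbb E}[a_ja_k\,\Delta B_{\frac jn}\Delta B_{\frac kn}]$ in absolute value cannot give the stated bound: the diagonal terms alone contribute $\sum_j{\mathbb E}[a_j^2\,\Delta B_{\frac jn}^2]\approx \Nt\, n^{-2H}$, which after the prefactor $n^{-8H}$ is $n^{1-10H}=O(1)$ at $H=1/10$; these terms are nonnegative, so no amount of off-diagonal decay from (B.3) can remove them. The double sum is in fact much smaller than $\Nt\, n^{-2H}$ only because the off-diagonal covariances are \emph{negative} (fBm increments with $H<1/2$ are negatively correlated) and almost exactly cancel the diagonal --- for instance, with $f^{(5)}\equiv 1$ the double sum equals ${\mathbb E}\bigl[B_{\Nt/n}^2\bigr]=O(1)$ while the diagonal alone is $\Nt\, n^{-2H}$. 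Your integration-by-parts step reproduces the same obstruction: the resulting term ${\mathbb E}[a_ja_k]\left<\partial_{\frac jn},\partial_{\frac kn}\right>_\hten$ still has an uncancellable diagonal of size $\Nt\, n^{-2H}$ once absolute values are taken. So the ``crux'' you flag is not a bookkeeping issue that sharper use of Lemma 2.6 can fix; the method itself fails.

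The paper's proof uses a different device that captures the cancellation exactly: a two-sided Taylor expansion of $f^{(4)}$ of order $7$ about the midpoint $\Btilde$ yields
\[
f^{(4)}(B_{\frac{j+1}{n}})-f^{(4)}(B_{\frac jn}) = f^{(5)}(\Btilde)\Delta B_{\frac jn} + \tfrac{1}{24}f^{(7)}(\Btilde)\Delta B_{\frac jn}^3+\tfrac{1}{2^4 5!}f^{(9)}(\Btilde)\Delta B_{\frac jn}^5+O\bigl(\Delta B_{\frac jn}^7\bigr),
\]
so the sum in question equals the telescoping quantity $f^{(4)}(B_{\Nt/n})-f^{(4)}(0)$, which is $O(1)$ in $L^2(\Omega)$, minus weighted sums of \emph{odd powers at least $3$} of the increments; the latter are controlled by Lemma 3.2 (giving $C\Nt\, n^{-2rH}$ for power $r$) and, for the $\Delta B^7$ remainder, by a crude $L^4$ moment bound. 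After multiplying by $n^{-8H}$, every piece is $\le Cn^{-2H}$ at $H=1/10$. If you wanted to salvage a direct covariance computation, you would need a summation-by-parts argument exploiting the sign structure of $\left<\partial_{\frac jn},\partial_{\frac kn}\right>_\hten$; the Taylor/telescoping route is the clean way to exhibit the cancellation.
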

\begin{proof}
We start with a 2-sided Taylor expansion of $f^{(4)}$ of order 7.  That is,
\[f^{(4)}(B_{\frac{j+1}{n}}) - f^{(4)}( \widetilde B_{\frac jn}) = \sum_{\ell=1}^6 \frac{f^{(4+\ell)}( \widetilde B_{\frac jn})}{2^\ell \ell!}\Delta B_{\frac jn}^\ell +  \frac{f^{(11)}(\xi_j)}{2^7 7!}\Delta B_{\frac jn}^7\]
and
\[f^{(4)}(\widetilde B_{\frac jn})- f^{(4)}(B_{\frac jn}) = \sum_{\ell=1}^6 \frac{(-1)^{\ell+1}f^{(4+\ell)}( \widetilde B_{\frac jn})}{2^\ell \ell!}\Delta B_{\frac jn}^\ell +  \frac{f^{(11)}(\eta_j)}{2^7 7!}\Delta B_{\frac jn}^7,\]
for some intermediate values $\xi_j, \eta_j$ between $B_{j/n}$ and $B_{(j+1)/n}$.  Adding the above equations, we obtain
\begin{multline}\label{Taylor_3}f^{(4)}(B_{\frac{j+1}{n}}) - f^{(4)}(B_{\frac jn}) = f^{(5)}(\widetilde B_{\frac jn})\Delta B_{\frac jn} +\frac{f^{(7)}(\Btilde)}{24}\Delta B_{\frac jn}^3 + \frac{f^{(9)}(\Btilde)}{2^4 5!}\Delta B_{\frac jn}^5\\
+ \frac{f^{(11)}(\xi_j)+f^{(11)}(\eta_j)}{2^7 7!}\Delta B_{\frac jn}^7.\end{multline}
It follows that we can write
\begin{multline*}  {\mathbb E} \left[\left(  n^{-4H} \sum_{j=0}^{\Nt -1} f^{(5)}(\Btilde) \Delta B_{\frac jn} \right)^2\right]  \le 4{\mathbb E}\left[\left( n^{-4H}\sum_{j=0}^{\Nt -1} \left(f^{(4)}(B_{\frac{j+1}{n}}) - f^{(4)}(B_{\frac jn})\right)\right)^2\right]\\
+4{\mathbb E}\left[\left(n^{-4H}\sum_{j=0}^{\Nt -1} \frac{ f^{(7)}(\Btilde)}{24}\Delta B_{\frac jn}^3\right)^2\right]
+4{\mathbb E}\left[\left(n^{-4H}\sum_{j=0}^{\Nt -1} \frac{ f^{(9)}(\Btilde)}{2^4 5!}\Delta B_{\frac jn}^5\right)^2\right]\\
+4{\mathbb E}\left[\left(n^{-4H}\sum_{j=0}^{\Nt -1}\frac{f^{(11)}(\xi_j) + f^{(11)}(\eta_j)}{2^7 7!}\Delta B_{\frac jn}^7\right)^2\right]
.\end{multline*}
By growth assumptions on $f^{(4)}$,
\[ {\mathbb E}\left[\left( n^{-4H}\sum_{j=0}^{\Nt -1} \left(f^{(4)}(B_{\frac{j+1}{n}}) - f^{(4)}(B_{\frac jn})\right)\right)^2\right] = n^{-8H} {\mathbb E}\left[\left( f^{(4)}(B_{\frac{\Nt}{n}}) - f^{(4)}(0)\right)^2\right] \le Cn^{-8H}.\]
By Lemma 3.2,
\[{\mathbb E}\left[\left(n^{-4H}\sum_{j=0}^{\Nt -1} \frac{ f^{(7)}(\Btilde)}{24}\Delta B_{\frac jn}^3\right)^2\right] \le C\sup_{0\le j \le \Nt} \|f^{(7)}(\Btilde)\|^2_{{\mathbb D}^{6,2}}\Nt n^{-14H},\]
and 
\[{\mathbb E}\left[\left(n^{-4H}\sum_{j=0}^{\Nt -1} \frac{ f^{(9)}(\Btilde)}{2^4 5!}\Delta B_{\frac jn}^5\right)^2\right] \le C\sup_{0\le j \le \Nt} \|f^{(9)}(\Btilde)\|^2_{{\mathbb D}^{10,2}}\Nt n^{-18H}.\]
Then by (B.1),
\begin{multline*}
{\mathbb E}\left[\left(n^{-4H}\sum_{j=0}^{\Nt -1}\frac{f^{(11)}(\xi_j) + f^{(11)}(\eta_j)}{2^7 7!}\Delta B_{\frac jn}^7\right)^2\right]\\ \le C\left( {\mathbb E}\left[\sup_{s\in[0,t]} \left|f^{(11)}(B_s)^4\right|\right]\right)^{\frac 12}n^{-8H}\left(\sum_{j=0}^{\Nt -1} \|\Delta B_{\frac jn}^7\|_{L^4(\Omega)}\right)^2
\le C\Nt^2 n^{-22H} \le Cn^{-2H}.\end{multline*}
This proves the lemma.\end{proof}

Lemma 3.4 shows that only the terms
\[\sum_{j=0}^{\Nt -1} f^{(5)}(\Btilde) \delta^5\left( \partial_{\frac jn}^{\otimes 5}\right) + 10n^{-2H}\sum_{j=0}^{\Nt -1} f^{(5)}(\Btilde) \delta^3(\partial_{\frac jn}^{\otimes 3})\]
are significant.  Using Lemma 2.1.a, we can write the first term as
\[ \sum_{j=0}^{\Nt -1} \delta^5\left( f^{(5)}(\Btilde)\partial_{\frac jn}^{\otimes 5}\right) + \sum_{r=1}^5\binom 5r\sum_{j=0}^{\Nt -1} \delta^{5-r}\left( f^{(5+r)}(\Btilde)\partial_{\frac jn}^{\otimes(5-r)}\right)\left< \widetilde{\varepsilon}_{\frac jn}, \partial_{\frac jn}\right>_\hten^r.\]

By Lemma 2.1.c and (B.1), we have the estimate
\[ \left\| \delta^{(5-r)}\left( f^{(5+r)}(\widetilde B_{\frac jn})\partial_{\frac jn}^{\otimes (5-r)}\right)\right\|_{L^2(\Omega)} \le C\left\| \partial_{\frac jn}^{\otimes (5-r)}\right\|_{\hten^{\otimes 5-r}} \le Cn^{(r-5)H}.\]
It follows that for $r= 1, \dots, 5$, we can use Lemma 2.6.b,
\begin{multline*} {\mathbb E}\left| \binom 5r \sum_{j=0}^{\Nt -1} \delta^{(5-r)}\left( f^{(5+r)}(\widetilde B_{\frac jn})\partial_{\frac jn}^{\otimes (5-r)}\right) \left< \widetilde \varepsilon_{\frac jn}, \partial_{\frac jn}\right>_\hten^r\right|\\ 
\le Cn^{(r-5)H} \sum_{j=0}^{\Nt -1} \left| \left<\widetilde \varepsilon_{\frac jn}, \partial_{\frac jn}\right>_\hten^r\right| \le Cn^{-(3+r)H}.
\end{multline*}
By a similar computation, 
\begin{multline*} 10n^{-2H} \sum_{j=0}^{\Nt -1} f^{(5)}(\Btilde) \delta^3(\partial_{\frac jn}^{\otimes 3}) = 10n^{-2H}\sum_{j=0}^{\Nt -1} \delta^3 \left( f^{(5)}(\Btilde) \partial_{\frac jn}^{\otimes 3}\right)\\
+ 10n^{-2H}\sum_{r=1}^3\binom 3r \sum_{j=0}^{\Nt -1} \delta^{(3-r)}\left( f^{(5+r)}(\widetilde B_{\frac jn})\partial_{\frac jn}^{\otimes (3-r)}\right) \left< \widetilde \varepsilon_{\frac jn}, \partial_{\frac jn}\right>_\hten^r,
\end{multline*}
where
\[n^{-2H}{\mathbb E}\left|\sum_{r=1}^3\binom 3r \sum_{j=0}^{\Nt -1} \delta^{(3-r)}\left( f^{(5+r)}(\widetilde B_{\frac jn})\partial_{\frac jn}^{\otimes (3-r)}\right) \left< \widetilde \varepsilon_{\frac jn}, \partial_{\frac jn}\right>_\hten^r\right| \le Cn^{-4H}.\]
Therefore, we define
\begin{align*} F_n &:= \sum_{j=0}^{\Nt -1} \delta^5\left( f^{(5)}(\Btilde)\partial_{\frac jn}^{\otimes 5}\right) = \delta^5(u_n),\;\;\text{ where }\; u_n = \sum_{j=0}^{\Nt -1}f^{(5)}(\Btilde)\partial_{\frac jn}^{\otimes 5}; \,\text{and}\\ 
G_n &:= 10n^{-2H}\sum_{j=0}^{\Nt -1} \delta^3\left( f^{(5)}(\Btilde)\partial_{\frac jn}^{\otimes 5}\right) = \delta^3(v_n),\;\;\text{ where }\; v_n = 10n^{-2H}\sum_{j=0}^{\Nt -1}f^{(5)}(\Btilde)\partial_{\frac jn}^{\otimes 3}.\end{align*}
 It follows that for large $n$, the term \req{simps_err} may be represented as $F_n + G_n + \epsilon_n$, where $\epsilon_n \to 0$ in $L^1(\Omega)$.  Then, as introduced in Remark 2.5, we will work with the vector sequence $(F_n, G_n)$.

\medskip
\subsection{Conditions of Theorem 2.3.}  %%%%%Step 3
Our main task in this step is to show that the sequence of random vectors $(F_n, G_n)$ satisfies the conditions of Theorem 2.3.  The first condition is that $(F_n, G_n)$ is bounded in $L^1(\Omega)$.  In fact, we have a stronger result that will also be helpful with later conditions.

\begin{lemma} Fix real numbers $0 < t \le T$ and $p \ge 2$, and integer $n \ge 2$. Let $\phi : {\mathbb R} \to {\mathbb R}$ be a ${\cal C}^\infty$ function such that $\phi$ and all its derivatives have moderate growth.  For integer $1 \le q \le 5$, define
\[ w_n = \sum_{j=0}^{\Nt -1} \phi (\Btilde) \partial_{\frac jn}^{\otimes q}.\]
Then for integers $0 \le a \le 5$, there exists a constant $c_{q,a}$ such that
\[ \left\| D^a \delta^q(w_n) \right\|^2_{L^p(\Omega; \hten^{\otimes a})} \le c_{q,a} \sup_{0\le j \le \Nt} \left\| \phi (\Btilde)\right\|_{{\mathbb D}^{q+a,p}}^2 \Nt n^{-2qH}\le Cn^{1-2qH}.\]
In particular, 
\beq{FGMeyer} \left\| D^a F_n\right\|_{L^p(\Omega; \hten^{\otimes a})} + \left\| D^a G_n\right\|_{L^p(\Omega; \hten^{\otimes a})} \le C.\eeq
\end{lemma}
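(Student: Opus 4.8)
The plan is to reduce the entire estimate to Meyer's inequality (Lemma 2.1.c) together with the double-sum bound of Lemma 2.2.c, so that almost no hand computation with the Skorohod integral is needed. First I would observe that, since $D^a\delta^q(w_n)$ is precisely one of the terms defining the norm $\|\delta^q(w_n)\|_{\mathbb{D}^{a,p}}$, we have the trivial bound $\|D^a\delta^q(w_n)\|_{L^p(\Omega;\hten^{\otimes a})} \le \|\delta^q(w_n)\|_{\mathbb{D}^{a,p}}$. Applying Lemma 2.1.c with $k=q+a$ (legitimate because $q+a\ge q\ge 1$) then gives
\[ \|D^a\delta^q(w_n)\|_{L^p(\Omega;\hten^{\otimes a})}^2 \le \|\delta^q(w_n)\|_{\mathbb{D}^{a,p}}^2 \le c_{q+a,p}\,\|w_n\|_{\mathbb{D}^{q+a,p}(\hten^{\otimes q})}^2, \]
so the whole problem collapses to estimating the Sobolev norm of the integrand $w_n$ itself (which lies in every $\mathbb{D}^{q+a,p}(\hten^{\otimes q})$ by the moderate growth of $\phi$ and its derivatives).

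For that I would exploit the fact that $\widetilde B_{j/n}=B(\widetilde\varepsilon_{j/n})$ is linear in $B$, so that $D\widetilde B_{j/n}=\widetilde\varepsilon_{j/n}$ and all higher derivatives vanish. The chain rule then collapses to $D^m\phi(\widetilde B_{j/n})=\phi^{(m)}(\widetilde B_{j/n})\,\widetilde\varepsilon_{j/n}^{\otimes m}$, whence
\[ D^m w_n = \sum_{j=0}^{\Nt-1}\phi^{(m)}(\widetilde B_{j/n})\,\widetilde\varepsilon_{j/n}^{\otimes m}\otimes\partial_{\frac jn}^{\otimes q}\;\in\; L^p(\Omega;\hten^{\otimes(m+q)}). \]

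The computational heart is then to bound $\|D^m w_n\|_{L^p(\Omega;\hten^{\otimes(m+q)})}^2$ for each $0\le m\le q+a$. Expanding the squared Hilbert-space norm as a double sum and applying Cauchy--Schwarz to the mixed inner product, $|\langle\widetilde\varepsilon_{j/n},\widetilde\varepsilon_{k/n}\rangle_\hten|^m\le \|\widetilde\varepsilon_{j/n}\|_\hten^m\|\widetilde\varepsilon_{k/n}\|_\hten^m$, I would recombine each factor $|\phi^{(m)}(\widetilde B_{j/n})|\,\|\widetilde\varepsilon_{j/n}\|_\hten^m = \|D^m\phi(\widetilde B_{j/n})\|_{\hten^{\otimes m}}$ to obtain
\[ \|D^m w_n\|_{\hten^{\otimes(m+q)}}^2 \le \sum_{j,k=0}^{\Nt-1}\|D^m\phi(\widetilde B_{j/n})\|_{\hten^{\otimes m}}\,\|D^m\phi(\widetilde B_{k/n})\|_{\hten^{\otimes m}}\,|\langle\partial_{\frac jn},\partial_{\frac kn}\rangle_\hten|^q. \]
Keeping $\|D^m\phi(\widetilde B_{j/n})\|$ intact (rather than splitting off the possibly small $\|\widetilde\varepsilon_{j/n}\|^m$) is the point that makes the right-hand side of the lemma come out cleanly. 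Taking the $L^{p/2}(\Omega)$ norm, applying the triangle inequality term by term (valid since $p\ge 2$) and bounding the random factors by Cauchy--Schwarz in $L^p(\Omega)$ leaves $\sup_j\|D^m\phi(\widetilde B_{j/n})\|_{L^p(\Omega;\hten^{\otimes m})}^2\le\sup_j\|\phi(\widetilde B_{j/n})\|_{\mathbb{D}^{q+a,p}}^2$ multiplied by the purely deterministic sum $\sum_{j,k}|\langle\partial_{\frac jn},\partial_{\frac kn}\rangle_\hten|^q$, which is exactly the object controlled by the double-sum estimate of Lemma 2.2.c and hence is at most $C\Nt n^{-2qH}$. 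Summing the finitely many contributions $m=0,\dots,q+a$ and feeding the result back through the Meyer estimate of the first step yields the stated bound $c_{q,a}\sup_j\|\phi(\widetilde B_{j/n})\|_{\mathbb{D}^{q+a,p}}^2\,\Nt n^{-2qH}$.

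Finally, the coarser bound $Cn^{1-2qH}$ follows from $\Nt\le nT$ together with the fact that moderate growth of $\phi$ and all its derivatives makes $\sup_j\|\phi(\widetilde B_{j/n})\|_{\mathbb{D}^{q+a,p}}$ bounded uniformly in $n$. Specializing then gives \req{FGMeyer}: with $\phi=f^{(5)}$ and $q=5$ we get $\|D^aF_n\|_{L^p}\le Cn^{(1-10H)/2}$, while the extra prefactor $n^{-2H}$ in $G_n$ (there $q=3$) produces $\|D^aG_n\|_{L^p}\le Cn^{-2H}n^{(1-6H)/2}=Cn^{(1-10H)/2}$; both are of constant order precisely because $H=1/10$. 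The main obstacle here is organizational rather than deep: one must keep the tensor bookkeeping straight when a derivative passes a divergence and when handling a general exponent $p\ge 2$, but Meyer's inequality lets us avoid the commutation formula altogether, and once the problem is reduced to $\sum_{j,k}|\langle\partial_{\frac jn},\partial_{\frac kn}\rangle_\hten|^q$ the decisive input is simply the already-established Lemma 2.2.c.
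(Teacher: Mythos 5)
Your proposal is correct and follows essentially the same route as the paper: reduce $\|D^a\delta^q(w_n)\|_{L^p}$ to $\|w_n\|_{{\mathbb D}^{q+a,p}(\hten^{\otimes q})}$ via Meyer's inequality, compute $D^m w_n$ by the chain rule, and control the resulting double sum with the estimate $\sum_{j,k}|\langle\partial_{j/n},\partial_{k/n}\rangle_\hten^q|\le C\Nt n^{-2qH}$ from the technical lemma of Section 2.3 (your only difference, keeping $\|\widetilde\varepsilon_{j/n}\|_\hten^m$ attached to $\phi^{(m)}$ rather than bounding $\sup_{j,k}|\langle\widetilde\varepsilon_{j/n},\widetilde\varepsilon_{k/n}\rangle_\hten|^m$ by a constant as the paper does, is immaterial). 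The final specialization to $F_n$ and $G_n$, including the role of the $n^{-2H}$ prefactor in $v_n$ and the fact that $1-10H=0$, also matches the paper.
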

\begin{proof}
This proof follows a similar result in \cite{NoNu}, see Theorem 5.2.  First, note that by Lemma 2.6.c and growth conditions on $\phi$, for each integer $b\ge 0$,
\begin{align*} \left\| D^b w_n\right\|^2_{\hten^{\otimes q+b}} &= \left\| \sum_{j=0}^{\Nt -1} \phi^{(b)}(\Btilde)\partial_{\frac jn}^{\otimes q}\otimes \widetilde \varepsilon_{\frac jn}^{\otimes b}\right\|_{\hten^{\otimes q+b}}^2\\
&\le \sup_{0\le j \le \Nt} \left| \phi^{(b)}(\Btilde)\right|^2~\sup_{0\le j,k \le \Nt} \left|\left< \widetilde \varepsilon_{\frac jn},  \widetilde \varepsilon_{\frac kn}\right>^b\right| \sum_{j,k=0}^{\Nt -1} \left|\left< \partial_{\frac jn}, \partial_{\frac kn}\right>_\hten^q\right|\\
&\le C\Nt n^{-2qH}\sup_{0\le j \le \Nt} \left| \phi^{(b)}(\Btilde)\right|^2 .\end{align*}
It follows that for $p \ge 2$,
\[ \left\| D^b w_n\right\|_{L^p(\Omega; \hten^{\otimes q+b})}^2 \le C \Nt n^{-2qH}{\mathbb E}\left[ \sup_{0\le j \le \Nt} \left| \phi^{(b)}(\Btilde)\right|^p\right]^{\frac 2p}.\]
Then, using the Meyer inequality (see \cite{NoNu}, Proposition 1.5.7),
\beq{Meyer}
\left\| D^a\delta^q(w_n) \right\|_{L^p(\Omega; \hten^{\otimes a})}^2\le \left\| \delta^q (w_n) \right\|^2_{{\mathbb D}^{a,p}} \le C \Nt n^{-2qH}\sup_{0\le j \le \Nt} \left\| \phi(\Btilde) \right\|_{{\mathbb D}^{q+a,p}(\hten^q)}^2\le C\Nt n^{-2qH}.\eeq
For \req{FGMeyer}, we have
\[ \left\| D^a F_n\right\|^2_{L^p(\Omega; \hten^{\otimes a})} = \left\| D^a \delta^5(u_n)\right\|^2_{L^p(\Omega; \hten^{\otimes a})}\le C\Nt n^{-10H}\sup_{0\le j \le \Nt} \left\| f^{(5)}(\Btilde)\right\|^2_{{\mathbb D}^{5+a,p}(\hten^{\otimes 5})} \le C,\]
and
\[ \left\| D^a G_n\right\|^2_{L^p(\Omega; \hten^{\otimes a})} = \left\| n^{-2H}D^a \delta^3(u_n)\right\|^2_{L^p(\Omega; \hten^{\otimes a})}\le C\Nt n^{-10H}\sup_{0\le j \le \Nt} \left\| f^{(5)}(\Btilde)\right\|^2_{{\mathbb D}^{3+a,p}(\hten^{\otimes 3})} \le C.\]

\end{proof}
The fact that $(F_n, G_n)$ is bounded in $L^1(\Omega)$ follows by taking $a=0$.  Next, we consider condition (a) of Theorem 2.3.
\begin{lemma}
Under the assumptions of Theorem 3.3, $(F_n, G_n)$ satisfies condition (a) of Theorem 2.3.  That is, we have
\begin{enumerate}[(a)]
\item For arbitrary $h \in\hten^{\otimes 5}$ and $g\in \hten^{\otimes 3}$,\[\lim_{n\to \infty} {\mathbb E}\left|\left< u_n, h\right>_{\hten^{\otimes 5}}\right| =\lim_{n\to\infty}{\mathbb E}\left|\left< v_n, g\right>_{\hten^{\otimes 3}}\right| =0.\] 
\item $\lim_{n\to\infty}{\mathbb E}\left|\left< u_n, \bigotimes_{i=1}^s D^{a_i} F_n \bigotimes_{i=s+1}^r D^{a_i}G_n \otimes h\right>_{\hten^{\otimes 5}}\right|=0$, where $0 \le a_i < 5$, $1\le a_1 + \cdots + a_r < 5$, and $h \in \hten^{\otimes 5-(a_1+\cdots +a_r)}$; and $\lim_{n\to\infty}{\mathbb E}\left|\left< v_n, \bigotimes_{i=1}^s D^{b_i} F_n \bigotimes_{i=s+1}^r D^{b_i}G_n \otimes g\right>_{\hten^{\otimes 3}}\right|=0$, where $0 \le b_i < 3$,  $1\le b_1 + \cdots + b_r < 3$, and $g \in \hten^{\otimes 3 - (b_1 +\cdots + b_r)}$.
\item $\lim_{n\to\infty}{\mathbb E}\left|\left< u_n, \bigotimes_{i=1}^s D^{a_i} F_n \bigotimes_{i=s+1}^r D^{a_i}G_n \right>_{\hten^{\otimes 5}}\right|=0$, where $r \ge 2$, $0 \le a_i < 5$ and $a_1 + \cdots + a_r =5$; and

$\lim_{n\to\infty}{\mathbb E}\left|\left< v_n, \bigotimes_{i=1}^s D^{b_i} F_n \bigotimes_{i=s+1}^r D^{b_i}G_n \right>_{\hten^{\otimes 3}}\right|=0$, where $r \ge 2$, $0 \le b_i < 3$ and $b_1 + \cdots + b_r =3$.
\end{enumerate}
\end{lemma}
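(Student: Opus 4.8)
The plan is to turn every pairing in (a)--(c) into a product of scalar inner products and then separate the (uniformly $L^p$-bounded) random factors from deterministic covariance sums governed by Lemma 2.2. Since $u_n$ is built from the pure tensor $\partial_{\frac jn}^{\otimes 5}$, the pairing splits as
\[ \left\langle u_n, \bigotimes_{i=1}^s D^{a_i}F_n\bigotimes_{i=s+1}^r D^{a_i}G_n\otimes h\right\rangle_{\hten^{\otimes 5}} = \sum_{j=0}^{\Nt-1} f^{(5)}(\Btilde)\prod_{i=1}^r\left\langle \partial_{\frac jn}^{\otimes a_i}, D^{a_i}(F_n\ \text{or}\ G_n)\right\rangle \left\langle \partial_{\frac jn}^{\otimes r'},h\right\rangle, \]
with $r'=5-(a_1+\cdots+a_r)$, and analogously for $v_n$. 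Because each $G_n$-factor carries an extra $n^{-2H}$ and a lower Skorohod order than the corresponding $F_n$-factor, the worst case is when all factors are $F_n$, so I track powers of $n$ only in that case; the $v_n$ side and the mixed factors then follow with decay to spare.

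For part (a) I use Remark 2.4 to take $h$ (resp.\ $g$) in the total set of tensor products of indicators $\varepsilon_{s_1}\otimes\cdots\otimes\varepsilon_{s_5}$, so the pairing factors as $\prod_\ell\langle\partial_{\frac jn},\varepsilon_{s_\ell}\rangle_\hten$. Bounding all but one factor by Cauchy--Schwarz ($|\langle\partial_{\frac jn},\varepsilon_{s_\ell}\rangle_\hten|\le Cn^{-H}$) and summing the last with Lemma 2.2(a) gives ${\mathbb E}|\langle u_n,h\rangle_{\hten^{\otimes 5}}|\le Cn^{-4H}\to 0$, and the $n^{-2H}$ prefactor handles $\langle v_n,g\rangle$ similarly. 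For part (b) I apply Cauchy--Schwarz slot-by-slot, $|\langle\partial_{\frac jn}^{\otimes a_i},D^{a_i}F_n\rangle|\le n^{-a_iH}\|D^{a_i}F_n\|_{\hten^{\otimes a_i}}$, together with Hölder and the uniform bound $\|D^{a_i}F_n\|_{L^p}\le C$ from Lemma 3.5; this produces $n^{-(a_1+\cdots+a_r)H}$, while the free slot $h$ supplies $\sum_j|\langle\partial_{\frac jn}^{\otimes r'},h\rangle|\le Cn^{-(r'-1)H}$ via Lemma 2.2(a). Since $(a_1+\cdots+a_r)+r'=5$, the product is $Cn^{-4H}\to 0$.

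Part (c) is the genuinely hard case and the main obstacle. Now $r'=0$, so there is no free slot to absorb a power of $n$, and the crude estimate of part (b) only yields $\Nt\,n^{-5H}\sim n^{1-5H}$, which diverges at $H=1/10$. To do better I expand each derivative by Lemma 2.1(b), $D^{a}F_n=\sum_i c_{a,i}\,\delta^{5-i}(D^{a-i}u_n)$ with $D^{a-i}u_n=\sum_k f^{(5+a-i)}(\widetilde B_{\frac kn})\,\widetilde\varepsilon_{\frac kn}^{\otimes(a-i)}\otimes\partial_{\frac kn}^{\otimes 5}$, and contract against $\partial_{\frac jn}^{\otimes a}$. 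Each resulting term carries covariance factors $\langle\partial_{\frac jn},\partial_{\frac kn}\rangle_\hten^{\ell}\,\langle\partial_{\frac jn},\widetilde\varepsilon_{\frac kn}\rangle_\hten^{a-\ell}$ and a residual integral $\delta^{5-\ell}(\cdots)$ of $L^p$ norm at most $Cn^{-(5-\ell)H}$ by Meyer's inequality (Lemma 2.1(c)). The whole expression becomes a multiple sum over $j,k_1,\dots,k_r$; for the terms with every $\ell_i\ge 1$ the factor $\langle\partial_{\frac jn},\partial_{\frac{k_i}{n}}\rangle_\hten^{\ell_i}$ renders each inner $k_i$-sum summable via Lemma 2.2(c), and careful bookkeeping of the powers gives a bound of order at most $n^{1-15H}=n^{-1/2}\to 0$.

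The delicate terms are those in which a derivative is contracted entirely against $\widetilde\varepsilon_{\frac{k_i}{n}}$ (that is, $\ell_i=0$); these leave inner sums $\sum_k|\langle\partial_{\frac jn},\widetilde\varepsilon_{\frac kn}\rangle_\hten|^{a_i}$ which, by (B.5), are \emph{not} uniformly small: near the origin the boundary contribution ${\mathbb E}[\Delta B_{\frac jn}B_t]\approx n^{-2H}$ persists for all $k$, so such sums grow like $n^{1-2H}$. Consequently the absolute-value (Hölder) estimate is only borderline here, of order $O(1)$ rather than $o(1)$, at the critical exponent $H=1/10$, and cannot by itself close the argument. The resolution, which is the technical heart of the lemma, is to retain the signed structure instead of passing to absolute values: by the product formula (Lemma 2.1(d)) the product of two residual integrals $\delta^5(\cdots_{k_1})\delta^5(\cdots_{k_2})$ carries a factor $\langle\partial_{\frac{k_1}{n}},\partial_{\frac{k_2}{n}}\rangle_\hten^{z}$ that decays off-diagonal, so estimating the \emph{second moment} of the offending sum enforces the near-diagonal constraint $k_1\approx k_2$ and furnishes exactly the missing decay. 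I expect this second-moment analysis, with its separation of chaoses and its tracking of the boundary indices $j$ near $0$, to be the longest and most error-prone step, while the $v_n$ side and the mixed $F_n,G_n$ factors follow by the same scheme with the additional $n^{-2H}$ factors making them strictly smaller.
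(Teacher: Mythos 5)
Your proposal follows essentially the same route as the paper's Section 4.2: parts (a) and (b) are treated identically (elementary tensors via Remark 2.4, slot-by-slot Cauchy--Schwarz with (B.1)/(B.4), the uniform derivative bounds of Lemma 3.5, and the sum estimates of Lemma 2.6), and for part (c) you correctly diagnose that absolute-value first-moment bounds fail precisely for the terms where a derivative is contracted entirely against $\widetilde\varepsilon_{k/n}$, with the remedy being an $L^2$ estimate that exploits the off-diagonal decay of $\langle \partial_{\frac kn}, \partial_{\frac{k'}{n}}\rangle_\hten$ --- which is exactly what the paper does by squaring the full pairing and using Lemma 3.5 to bound $\|\sum_k \delta^{b}(\cdot)\|_{L^{2r+1}(\Omega)}$ by $C\Nt^{1/2}n^{-bH}$ rather than the trivial $C\Nt n^{-bH}$. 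The only substantive shortfall is that the decisive exponent bookkeeping in (c) is left as a prediction rather than carried out; where you do compute (the $n^{1-15H}$ bound for the all-$\ell_i\ge 1$ terms) the numbers check out, and the paper's count closes with margin, giving $C\Nt^r n^{-10rH-6H}\le Cn^{-6H}$ for the squared quantity.
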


The proof of this lemma is deferred to Section 4 due to its length.  To verify condition (b) of Theorem 2.3, we have four terms to consider:
\begin{itemize}
\item $\left< u_n, D^5 G_n \right>_{\hten^{\otimes 5}}$
\item $\left< v_n, D^3 F_n \right>_{\hten^{\otimes 3}}$
\item $\left< u_n, D^5 F_n \right>_{\hten^{\otimes 5}}$
\item $\left< v_n, D^3 G_n \right>_{\hten^{\otimes 3}}$
\end{itemize}

We deal with the first two terms in the following lemma.  The proof is given in Section 4.
\begin{lemma}Under the assumptions of Theorem 3.3, we have
\begin{enumerate}[(a)]
\item $\lim_{n \to \infty} {\mathbb E} \left| \left< u_n, D^5 G_n \right>_{\hten^{\otimes 5}}\right| = 0$
\item $\lim_{n \to \infty} {\mathbb E} \left| \left< v_n, D^3 F_n \right>_{\hten^{\otimes 3}}\right| = 0$.
\end{enumerate}\end{lemma}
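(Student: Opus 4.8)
The plan is to turn each inner product into an explicit sum over the partition indices by commuting the Malliavin derivative past the Skorohod integral, and then to estimate the resulting terms with the covariance bounds (B.1)--(B.5) and the contraction sums of Lemma 2.6. First I would expand the derivatives with Lemma 2.1.b. Writing $G_n=\delta^3(v_n)$ gives
\[ D^5G_n=\sum_{i=0}^{3}i!\binom5i\binom3i\,\delta^{3-i}\!\left(D^{5-i}v_n\right), \]
and writing $F_n=\delta^5(u_n)$ gives
\[ D^3F_n=\sum_{i=0}^{3}i!\binom3i\binom5i\,\delta^{5-i}\!\left(D^{3-i}u_n\right). \]
Since $\widetilde B_{\frac jn}=B(\widetilde\varepsilon_{\frac jn})$, the chain rule yields $D^{b}f^{(5)}(\widetilde B_{\frac jn})=f^{(5+b)}(\widetilde B_{\frac jn})\,\widetilde\varepsilon_{\frac jn}^{\otimes b}$, so each $D^{5-i}v_n$ is the explicit sum $10n^{-2H}\sum_k f^{(10-i)}(\widetilde B_{\frac kn})\,\partial_{\frac kn}^{\otimes 3}\otimes\widetilde\varepsilon_{\frac kn}^{\otimes(5-i)}$, and similarly for $D^{3-i}u_n$.

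\textbf{Reduction.} Because $\delta^{m}$ acts only on the integrated slots, pairing the deterministic, fully symmetric tensor $\partial_{\frac jn}^{\otimes 5}$ (from $u_n$) against the free slots of $\delta^{3-i}(D^{5-i}v_n)$ commutes with the integral and factors out deterministic inner products. Concretely, the five free slots of $D^{5-i}v_n$ always consist of three $\partial_{\frac kn}$-slots and two $\widetilde\varepsilon_{\frac kn}$-slots, so the pairing produces the factor $\langle\partial_{\frac jn},\partial_{\frac kn}\rangle_{\hten}^3\,\langle\partial_{\frac jn},\widetilde\varepsilon_{\frac kn}\rangle_{\hten}^2$ and leaves a reduced integral $\delta^{3-i}\!\big(f^{(10-i)}(\widetilde B_{\frac kn})\widetilde\varepsilon_{\frac kn}^{\otimes(3-i)}\big)$, whose $L^2(\Omega)$ norm is $O(1)$ by the Meyer inequality (Lemma 2.1.c) since its integrand has norm of order $\|\widetilde\varepsilon_{\frac kn}\|^{3-i}=O(1)$. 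The symmetric computation for part (b) pairs $\partial_{\frac kn}^{\otimes 3}$ (from $v_n$) against the three free $\partial_{\frac jn}$-slots of $D^{3-i}u_n$, producing $\langle\partial_{\frac kn},\partial_{\frac jn}\rangle_{\hten}^3$ and leaving $\delta^{5-i}\!\big(f^{(8-i)}(\widetilde B_{\frac jn})\,\partial_{\frac jn}^{\otimes 2}\otimes\widetilde\varepsilon_{\frac jn}^{\otimes(3-i)}\big)$.

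\textbf{Part (a).} This reduction already finishes part (a), because the slot count $5$ versus $3$ forces two small factors $\langle\partial_{\frac jn},\widetilde\varepsilon_{\frac kn}\rangle_{\hten}$, each bounded by $Cn^{-2H}$ via (B.4). Using this, the prefactor $10n^{-2H}$ carried by $v_n$, the $O(1)$ bound on the reduced integral, and the double-sum estimate $\sum_{j,k}|\langle\partial_{\frac jn},\partial_{\frac kn}\rangle_{\hten}^3|\le C\Nt\, n^{-6H}$ from Lemma 2.6.c, a term-by-term estimate gives
\[ {\mathbb E}\left|\langle u_n,D^5G_n\rangle_{\hten^{\otimes5}}\right|\le C\,n^{-2H}\cdot n^{-4H}\cdot \Nt\, n^{-6H}=C\,\Nt\, n^{-12H}, \]
which tends to $0$ since $12H=6/5>1$ at $H=1/10$.

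\textbf{Part (b): the main obstacle.} Here the pairing exhausts all three slots on the $\partial_{\frac jn}$ factors, so there are \emph{no} forced small $\langle\partial,\widetilde\varepsilon\rangle$ factors; only two copies of $\partial_{\frac jn}$ survive inside $\delta^{5-i}$, contributing $\|\partial_{\frac jn}^{\otimes 2}\|=n^{-2H}$. A naive term-by-term bound then yields only $n^{-2H}\cdot n^{-2H}\cdot\Nt\, n^{-6H}\approx \Nt\, n^{-10H}=O(1)$, which is not enough, precisely because the triangle inequality discards the near-orthogonality of the integrals $\delta^{5-i}(\cdots)$ across $j$. The remedy is to estimate the full second moment ${\mathbb E}\big[\langle v_n,D^3F_n\rangle_{\hten^{\otimes3}}^2\big]$, expanding the product of the two surviving Skorohod integrals by Lemma 2.1.d; the dominant contribution is the full contraction of the two $\partial^{\otimes 2}$ blocks, carrying $\langle\partial_{\frac jn},\partial_{\frac{j'}n}\rangle_{\hten}^2$. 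The second moment becomes a four-index sum in which summing out $k$ and $k'$ against the factors $\langle\partial_{\frac kn},\partial_{\frac jn}\rangle_{\hten}^3$ and $\langle\partial_{\frac{k'}n},\partial_{\frac{j'}n}\rangle_{\hten}^3$ gives $n^{-6H}$ each, while $\sum_{j,j'}|\langle\partial_{\frac jn},\partial_{\frac{j'}n}\rangle_{\hten}^2|\le C\Nt\, n^{-4H}$ (all by Lemma 2.6.c), so that
\[ {\mathbb E}\big[\langle v_n,D^3F_n\rangle_{\hten^{\otimes3}}^2\big]\le C\,n^{-4H}\cdot n^{-6H}\cdot n^{-6H}\cdot\Nt\, n^{-4H}=C\,\Nt\, n^{-20H}=O(n^{-1}). \]
The technical heart of the proof is organizing this four-index bookkeeping and checking that the lower-order contractions ($z<2$) produced by Lemma 2.1.d, which carry surviving integrals handled through the duality relation \req{duality} and the a priori norm bounds of Lemma 3.5, are all dominated by the estimate above; the resulting $L^1$ rate $O(n^{-1/2})$ then gives the claim.
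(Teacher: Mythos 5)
Your overall strategy coincides with the paper's: expand $D^5G_n$ and $D^3F_n$ via Lemma 2.1.b, bound part (a) term by term with (B.1)--(B.5) and Lemma 2.6, and treat the one piece of part (b) where the triangle inequality only yields $O(1)$ by passing to the second moment and the product formula of Lemma 2.1.d. Identifying that obstacle is the right main idea. However, there are two genuine gaps. First, your slot accounting after Lemma 2.1.b is correct only for the top term: the free slots of $\delta^{3-i}(D^{5-i}v_n)$ are $\partial_{\frac kn}^{\otimes i}\otimes\widetilde\varepsilon_{\frac kn}^{\otimes(5-i)}$ and the surviving integral is $\delta^{3-i}\bigl(f^{(10-i)}(\widetilde B_{\frac kn})\partial_{\frac kn}^{\otimes(3-i)}\bigr)$, so the pairing factor is $\langle\partial_{\frac jn},\partial_{\frac kn}\rangle_\hten^{i}\langle\partial_{\frac jn},\widetilde\varepsilon_{\frac kn}\rangle_\hten^{5-i}$, varying with $i$; only $i=3$ matches your ``always three $\partial$-slots and two $\widetilde\varepsilon$-slots'' description. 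The $i<3$ terms happen to be smaller, but your argument never estimates them. The same confusion appears in (b), where in fact only the $i=3$ term lacks a $\langle\partial_{\frac jn},\widetilde\varepsilon_{\frac kn}\rangle$ factor and requires the second-moment treatment; $i=0,1,2$ go through by the triangle inequality with bound $Cn^{-H}$.

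Second, and more seriously, your claim that the partial contractions $p<2$ produced by Lemma 2.1.d ``are all dominated by'' the full-contraction estimate $C\Nt n^{-20H}=O(n^{-1})$ is false, and the verification you defer is exactly where the work lies. For $p=0$ the a priori bound $\bigl\|\delta^{4}\bigl(g(k,k')\partial_{\frac kn}^{\otimes2}\otimes\partial_{\frac{k'}{n}}^{\otimes2}\bigr)\bigr\|_{L^2(\Omega)}\le Cn^{-4H}$ from Lemma 2.1.c gives only $n^{-4H}\cdot n^{-4H}\cdot\bigl(\Nt n^{-6H}\bigr)^2=\Nt^2n^{-20H}=O(1)$, which does not vanish. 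One must instead apply the duality \req{duality}, differentiating $g(j,j')$ four times and extracting four factors $\langle\widetilde\varepsilon_{\cdot},\partial_{\cdot}\rangle_\hten\le Cn^{-2H}$ via (B.4); this improves the bound to $C\Nt^2n^{-24H}=O(n^{-4H})$, and similarly $O(n^{-2H})$ for $p=1$. These terms vanish, but they dominate the $p=2$ term rather than being dominated by it; consequently your asserted $L^1$ rate $O(n^{-1/2})$ is not attainable, the true rate being $O(n^{-H})=O(n^{-1/10})$.
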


\medskip
This leaves the variance terms.  Lemma 2.1.b allows us to write
\begin{align*}
\left< u_n, D^5F_n\right>_{\hten^{\otimes 5}} &= \sum_{j,k = 0}^{\Nt -1} \left< f^{(5)}(\Btilde) \partial_{\frac jn}^{\otimes 5}, D^5\delta^5\left( f^{(5)}(\widetilde B_{\frac kn})\partial_{\frac kn}^{\otimes 5}\right)\right>_{\hten^{\otimes 5}}\\
&= \sum_{z=0}^4 {\binom 5z}^2 z! \sum_{j,k = 0}^{\Nt -1} \left< f^{(5)}(\Btilde) \partial_{\frac jn}^{\otimes 5},\delta^{5-z}\left( f^{(10-z)}(\widetilde B_{\frac kn})\partial_{\frac kn}^{\otimes 5-z}\right)\partial_{\frac kn}^{\otimes z}\otimes \widetilde \varepsilon_{\frac kn}^{\otimes 5-z}\right>_{\hten^{\otimes 5}}\\
&\qquad\;\; + 5! \sum_{j,k = 0}^{\Nt -1} \left< f^{(5)}(\Btilde) \partial_{\frac jn}^{\otimes 5}, f^{(5)}(\widetilde B_{\frac kn})\partial_{\frac kn}^{\otimes 5}\right>_{\hten^{\otimes 5}}.\end{align*}
We first deal with the case $0 \le z \le 4$.  We have
\begin{multline*}
{\mathbb E} \sum_{j,k=0}^{\Nt -1} \left| \left<f^{(5)}(\Btilde) \partial_{\frac jn}^{\otimes 5},\delta^{5-z}\left( f^{(10-z)}(\widetilde B_{\frac kn})\partial_{\frac kn}^{\otimes 5-z}\right)\partial_{\frac kn}^{\otimes z}\otimes \widetilde \varepsilon_{\frac kn}^{\otimes 5-z}\right>_{\hten^{\otimes 5}}\right|\\
\le C \sup_{0\le j \le \Nt} \left\| f^{(5)}(\Btilde)\right\|_{L^2(\Omega)} \sup_{0\le k \le \Nt}\left\| \delta^{5-z}\left(f^{(10-z)}(\widetilde B_{\frac kn})\partial_{\frac kn}^{\otimes 5-z}\right)\right\|_{L^2(\Omega)}\\
\times  \sum_{j,k=0}^{\Nt -1} \left| \left< \partial_{\frac jn}, \partial_{\frac kn}\right>_\hten^z \left< \partial_{\frac jn}, \widetilde \varepsilon_{\frac kn}\right>_\hten^{5-z}\right|.
\end{multline*}
By (B.1) and Lemma 2.1.c, we have
\[\sup_{0\le k \le \Nt}\left\| \delta^{5-z}\left(f^{(10-z)}(\widetilde B_{\frac kn})\partial_{\frac kn}^{\otimes 5-z}\right)\right\|_{L^2(\Omega)} \le C\| \partial_{\frac 1n} \|^{5-z}_\hten \le Cn^{-(5-z)H},\]
so for the case $z=0$, we have
\begin{multline*}\sup_{0\le j \le \Nt} \left\| f^{(5)}(\Btilde)\right\|_{L^2(\Omega)} \sup_{0\le k \le \Nt}\left\| \delta^{5-z}\left(f^{(10-z)}(\widetilde B_{\frac kn})\partial_{\frac kn}^{\otimes 5-z}\right)\right\|_{L^2(\Omega)}\\
\times  \sum_{j,k=0}^{\Nt -1} \left| \left< \partial_{\frac jn}, \partial_{\frac kn}\right>_\hten^z \left< \partial_{\frac jn}, \widetilde \varepsilon_{\frac kn}\right>_\hten^{5-z}\right|\\
\le Cn^{-5H} \sup_{0\le j \le \Nt} \left\{\sup_{s\in[0,t]} \left| \left< \partial_{\frac jn}, \varepsilon_s\right>_\hten^4\right|\right\} \sum_{k=0}^{\Nt -1} \sup_{0\le k \le \Nt} \sum_{j=0}^{\Nt -1} \left|\left< \partial_{\frac jn}, \widetilde \varepsilon_{\frac kn}\right>_\hten\right|.\end{multline*}
By (B.4) and Lemma 2.6.a, respectively, 
\[\sup_{0\le j \le \Nt} \left\{\sup_{s\in[0,t]} \left| \left< \partial_{\frac jn}, \varepsilon_s\right>_\hten^4\right|\right\} \le Cn^{-8H}\;\text{ and} \;  \sup_{0\le k \le \Nt} \sum_{j=0}^{\Nt -1} \left|\left< \partial_{\frac jn}, \widetilde \varepsilon_{\frac kn}\right>_\hten\right| \le C,\]
so this gives
\[ Cn^{-5H} \sup_{0\le j \le \Nt} \left\{\sup_{s\in[0,t]} \left| \left< \partial_{\frac jn}, \varepsilon_s\right>_\hten^4\right|\right\} \sum_{k=0}^{\Nt -1} \sup_{0\le k \le \Nt} \sum_{j=0}^{\Nt -1} \left|\left< \partial_{\frac jn}, \widetilde \varepsilon_{\frac kn}\right>_\hten\right| \le C\Nt n^{-13H} \le Cn^{-3H}.\]
 If $1 \le z \le 4$, then by (B.1), (B.4) and Lemma 2.6.c we have an upper bound of 
\begin{multline*}
\sup_{0\le j \le \Nt} \left\| f^{(5)}(\Btilde)\right\|_{L^2(\Omega)} \sup_{0\le k \le \Nt}\left\| \delta^{5-z}\left(f^{(10-z)}(\widetilde B_{\frac kn})\partial_{\frac kn}^{\otimes 5-z}\right)\right\|_{L^2(\Omega)}
\sum_{j,k=0}^{\Nt -1} \left| \left< \partial_{\frac jn}, \partial_{\frac kn}\right>_\hten^z \left< \partial_{\frac jn}, \widetilde \varepsilon_{\frac kn}\right>_\hten^{5-z}\right|\\
\le C \| \partial_{\frac 1n}\|_\hten^{5-z}~\sup_{0\le j\le \Nt}\left\{ \sup_{s\in[0,t]} \left| \left< \partial_{\frac jn}, \varepsilon_s\right>^{5-z}\right|\right\}\sum_{j,k=0}^{\Nt -1} \left|\left< \partial_{\frac jn}, \partial_{\frac kn}\right>_\hten^z\right|
\le C\Nt n^{-(15-z)H} \le Cn^{-H},\end{multline*}
because $z< 5$.  It follows that the term corresponding to each $z = 0, \dots, 4$ vanishes in $L^1(\Omega)$, and we have that only the term with $z = 5$ is significant.  For the case $z=5$, we use a result from \cite{NoNu}, see proof of Theorem 5.2.
\begin{align*}
5! \sum_{j,k = 0}^{\Nt -1} & \left< f^{(5)}(\Btilde) \partial_{\frac jn}^{\otimes 5}, f^{(5)}(\widetilde B_{\frac kn})\partial_{\frac kn}^{\otimes 5}\right>_{\hten^{\otimes 5}}\\
&\quad = 5! \sum_{j,k=0}^{\Nt -1} f^{(5)}(\Btilde)  f^{(5)}(\widetilde B_{\frac kn}) \left({\mathbb E}\left[ \Delta B_{\frac jn}, \Delta B_{\frac kn}\right]\right)^5\\
&\quad = \frac{5!}{2^5n^{10H}} \sum_{p=-\infty}^\infty \sum_{j=(0 \vee -p)}^{(\Nt -1) \wedge (\Nt -1-p)} f^{(5)}(\Btilde) f^{(5)}(\widetilde B_{\frac{j+p}{n}}) \left( |p+1|^{2H} - 2|p|^{2H} + |p-1|^{2H}\right)^5,\end{align*}
which (for $H = 1/10$) converges in $L^1 (\Omega)$ to
\beq{Var_F_n} \frac{5!}{2^5} \kappa_5 \int_0^t f^{(5)}(B_s)^2~ds, \; \text{where } \kappa_5 = \sum_{p\in {\mathbb Z}} \left( |p+1|^{\frac 15} - 2|p|^{\frac 15} + |p-1|^{\frac 15}\right)^5.\eeq  Hence, we have that
\beq{var_F_final} \lim_{n \to \infty} \left< u_n, D^5 F_n \right>_{\hten^{\otimes 5}} = \frac{5!}{2^5} \kappa_5 \int_0^t f^{(5)} (B_s)^2 ds.\eeq
Similarly, we have
\[ \left< v_n, D^3 G_n\right>_{\hten^{\otimes 3}} = 10^2n^{-4H}\sum_{z=0}^3{\binom 3z}^2z!\sum_{j,k=0}^{\Nt -1} \left< f^{(5)}(\Btilde)\partial_{\frac jn}^{\otimes 3}, \delta^{3-z}\left(f^{(8-z)}(\widetilde B_{\frac kn})\partial_{\frac kn}^{\otimes 3-z}\right)\partial_{\frac kn}^{\otimes z}\otimes \widetilde \varepsilon_{\frac kn}^{\otimes 3-z}\right>_{\hten^{\otimes 3}}.\]
For $z=0$,
\begin{align*}
&100n^{-4H} {\mathbb E}\left| \sum_{j,k=0}^{\Nt -1} \left< f^{(5)}(\Btilde)\partial_{\frac jn}^{\otimes 3}, \delta^3\left(f^{(8)}(\widetilde B_{\frac kn})\partial_{\frac kn}^{\otimes 3}\right) \widetilde \varepsilon_{\frac kn}^{\otimes 3}\right>_{\hten^{\otimes 3}}\right|\\
&\qquad \le 100n^{-4H} \sup_{0\le j \le \Nt} \left\| f^{(5)}(\Btilde)\right\|_{L^2(\Omega)}\sup_{0\le k \le \Nt}\left\|\delta^3\left(f^{(8)}(\widetilde B_{\frac kn})\partial_{\frac kn}^{\otimes 3}\right)\right\|_{L^2(\Omega)} \sup_{j,k}\left| \left<\partial_{\frac jn}, \widetilde \varepsilon_{\frac kn}\right>_\hten^2\right|\\
&\qquad\qquad\;\;\times \sum_{k=0}^{\Nt-1} \sup_{s\in[0,t]}\sum_{j=0}^{\Nt -1} \left|\left< \partial_{\frac jn}, \varepsilon_s\right>_\hten \right|\\
&\qquad \le C\Nt n^{-11H} \le Cn^{-H}.\end{align*}
For $z=1$ or $z=2$, by (B.4) and Lemma 2.6.c,
\begin{align*}
&100{\binom 3z}^2 z! n^{-4H} {\mathbb E}\left| \sum_{j,k=0}^{\Nt -1}\left< f^{(5)}(\Btilde)\partial_{\frac jn}^{\otimes 3}, \delta^{3-z}\left(f^{(8-z)}(\widetilde B_{\frac kn})\partial_{\frac kn}^{\otimes 3-z}\right)\partial_{\frac kn}^{\otimes z}\otimes \widetilde \varepsilon_{\frac kn}^{\otimes 3-z}\right>_{\hten^{\otimes 3}}\right|\\
&\qquad \le Cn^{-4H}\sup_{0\le j \le \Nt} \left\| f^{(5)}(\Btilde)\right\|_{L^2(\Omega)}\sup_{0\le k \le \Nt}\left\|\delta^{3-z}\left(f^{(8-z)}(\widetilde B_{\frac kn})\partial_{\frac kn}^{\otimes 3-z}\right)\right\|_{L^2(\Omega)} \sup_{j,k}\left| \left<\partial_{\frac jn}, \widetilde \varepsilon_{\frac kn}\right>_\hten^{3-z}\right|\\
&\qquad\qquad\;\;\times\sum_{j,k=0}^{\Nt -1}\left| \left<\partial_{\frac jn}, \partial_{\frac kn}\right>_\hten^z\right|\\
&\qquad \le C\Nt n^{-(13-z)H} \le Cn^{-H},\end{align*}
because $z \le 2$.  Then for $z=3$, we have
\begin{multline*}
600n^{-4H}\sum_{j,k=0}^{\Nt -1}\left< f^{(5)}(\Btilde)\partial_{\frac jn}^{\otimes 3}, f^{(5)}(\widetilde B_{\frac kn})\partial_{\frac kn}^{\otimes 3}\right>_{\hten^{\otimes 3}}\\
= \frac{600}{2^3n^{10H}}\sum_{j,k=0}^{\Nt -1}f^{(5)}(\Btilde)f^{(5)}(\widetilde B_{\frac kn})\left( |j-k+1|^{2H}-2|j-k|^{2H}+|j-k-1|^{2H}\right)^3.
\end{multline*}
Similar to \req{Var_F_n}, this converges in $L^1(\Omega)$ to
\beq{Var_G_n} 75 \kappa_3 \int_0^t f^{(5)}(B_s)^2~ds, \; \text{where } \kappa_3 = \sum_{p\in {\mathbb Z}} \left( |p+1|^{\frac 15} - 2|p|^{\frac 15} + |p-1|^{\frac 15}\right)^3.\eeq  Hence, we have that
\beq{var_G_final} \lim_{n \to \infty} \left< v_n, D^3 G_n \right>_{\hten^{\otimes 3}} = 75 \kappa_3 \int_0^t f^{(5)} (B_s)^2 ds.\eeq

\medskip
\subsection{Proof of Theorem 3.3.}
By Sections 3.3, the term \req{simps_err} is dominated in probability by $\frac{1}{2880}(F_n + G_n)$.  By the results of Section 3.4, the vector $(F_n, G_n)$ satisfies Theorem 2.3, that is, $(F_n, G_n)$ converges stably as $n \to \infty$ to a mean-zero Gaussian random vector $(F_\infty, G_\infty)$ with independent components, whose variances are given by \req{var_F_final} and \req{var_G_final}, respectively.  It follows that $F_n + G_n$ converges in distribution to a centered Gaussian random variable with variance
\[ s^2 = \frac{5!}{2^5}\kappa_5 \int_0^t f^{(5)}(B_s)^2~ds + 75\kappa_3\int_0^t f^{(5)}(B_s)^2~ds = \beta^2 \int_0^t f^{(5)}(B_s)^2~ds,\]
where $\beta^2 = (5!)2^{-5}\kappa_5 + 75\kappa_3$.  The result of Theorem 3.3 then follows from the It\^o isometry.  This concludes the proof.
   
\section{Proof of Technical Lemmas}%%%%%%%%%%
\subsection{Proof of Lemma 3.2}
To simplify notation, let $Y_j := \phi(\Btilde)$.  Note that by (B.1), we have $\| \Delta B_{\frac jn}\|_{L^2(\Omega)} = \| \partial_{\frac jn}\|_\hten = n^{-H}$.  For Hermite polynomials $H_r(x)$, $r \ge 1$, it can be shown by induction on the relation $H_{q+1}(x) = xH_q(x) -qH_{q-1}(x)$ that 
\[ x^r = \sum_{p=0}^{\left\lfloor \frac r2 \right\rfloor} C(r,p)H_{r-2p}(x),\]
where each $C(r,p)$ is an integer constant.  From Section 2.1, we use \req{Hmap} with $x = \Delta B_{\frac jn}/\| \Delta B_{\frac jn}\|_{L^2(\Omega)} = n^{H} \Delta B_{\frac jn}$ to write
\[ H_r \left( n^{H}\Delta B_{\frac jn}\right) = \delta^r\left( n^{rH}\partial_{\frac jn}^{\otimes r}\right).\]
It follows that
\[ n^{rH}\Delta B_{\frac jn}^r = \sum_{p=0}^{\left\lfloor \frac r2 \right\rfloor} C(r,p)H_{r-2p}(n^{H}\Delta B_{\frac jn}) = \sum_{p=0}^{\left\lfloor \frac r2 \right\rfloor} C(r,p)\delta^{r-2p}\left( n^{(r-2p)H}\partial_{\frac jn}^{\otimes r-2p}\right),\]
which implies
\[ \Delta B_{\frac jn}^r = \sum_{p=0}^{\left\lfloor \frac r2 \right\rfloor} C(r,p)n^{-2pH}\delta^{r-2p}\left( \partial_{\frac jn}^{\otimes r-2p}\right).\]
With this representation for $\Delta B_{j/n}^r$, we then have
\begin{multline}\label{g_DB_r} {\mathbb E}\left[ \left( \sum_{j=0}^{\Nt -1} Y_j \Delta B_{\frac jn}^r\right)^2\right]
\\ = \sum_{p, p' =0}^{\lfloor \frac r2 \rfloor} C(r,p)C(r,p')n^{-2H(p+p')}\sum_{j,k=0}^{\Nt -1}{\mathbb E}\left[Y_j Y_k\delta^{r-2p}\left(\partial_{\frac jn}^{\otimes r-2p}\right)\delta^{r-2p'}\left(\partial_{\frac kn}^{\otimes r-2p'}\right)\right]
\\ \le \sum_{p, p' =0}^{\lfloor \frac r2 \rfloor} |C(r,p)C(r,p')|n^{-2H(p+p')}\sum_{j,k=0}^{\Nt -1}\left|{\mathbb E}\left[Y_j Y_k\delta^{r-2p}\left(\partial_{\frac jn}^{\otimes r-2p}\right)\delta^{r-2p'}\left(\partial_{\frac kn}^{\otimes r-2p'}\right)\right]\right|.\end{multline}
By Lemma 2.1.d, the product
\[\delta^{r-2p}\left(\partial_{\frac jn}^{\otimes r-2p}\right)\delta^{r-2p'}\left(\partial_{\frac kn}^{\otimes r-2p'}\right)\]
consists of terms of the form
\beq{rpz_int} C\delta^{2r-2(p+p')-2z}\left(\partial_{\frac jn}^{\otimes r-2p-z} \otimes \partial_{\frac kn}^{\otimes r-2p'-z}\right)\left< \partial_{\frac jn}, \partial_{\frac kn}\right>_\hten^z,\eeq
where $z\ge 0$ is an integer satisfying $2r-2(p+p')-2z \ge 0$.  Using \req{rpz_int}, we can write that \req{g_DB_r} consists of nonnegative terms of the form
\beq{delta_2r_2p} Cn^{-2H(p+p')}\sum_{j,k=0}^{\Nt -1}\left|{\mathbb E}\left[Y_j Y_k\delta^{2r-2(p+p')-2z}\left(\partial_{\frac jn}^{\otimes r-2p-z} \otimes \partial_{\frac kn}^{\otimes r-2p'-z}\right)\left< \partial_{\frac jn}, \partial_{\frac kn}\right>_\hten^z\right]\right|.\eeq
To address terms of this type, suppose first that $z \ge 1$.  Lemma 2.1.c implies that
\begin{multline*}\left\| \delta^{2r-2(p+p')-2z}\left(\partial_{\frac jn}^{\otimes r-2p-z} \otimes \partial_{\frac kn}^{\otimes r-2p'-z}\right)\right\|_{L^2(\Omega)}
\le C\left( \| \partial_{\frac jn}\|_\hten^{r-2p-z}  \|\partial_{\frac kn}\|_\hten^{r-2p'-z}\right)\\
\le C\left\| \partial_{\frac 1n}\right\|_\hten^{2r-2(p+p')-2z} = Cn^{-2H(r-p-p'-z)}.\end{multline*}
Hence, for $z \ge 1$, \req{delta_2r_2p} is bounded by
\begin{multline*} Cn^{-2H(p+p')}\sup_{0\le j \le \Nt} \left\| Y_j\right\|^2_{L^2(\Omega)} \left\|\partial_{\frac 1n}\right\|_\hten^{2r-2(p+p')-2z} \sum_{j,k=0}^{\Nt -1} \left|\left< \partial_{\frac jn}, \partial_{\frac kn}\right>_\hten^z\right|\\
\le C\sup_{0\le j \le \Nt}\left\| Y_j\right\|_{{\mathbb D}^{2r,2}}^2\Nt n^{-2rH},\end{multline*}
which follows from Lemma 2.6.c.

On the other hand, for the terms with $z=0$, by \req{duality} we have
\begin{multline}\label{z_zero}
{\mathbb E}\left[Y_j Y_k\delta^{2r-2(p+p')}\left(\partial_{\frac jn}^{\otimes r-2p} \otimes \partial_{\frac kn}^{\otimes r-2p'}\right)\right]\\
={\mathbb E}\left< D^{2r-2(p+p')}Y_jY_k, \partial_{\frac jn}^{\otimes r-2p} \otimes \partial_{\frac kn}^{\otimes r-2p'}\right>_{\hten^{\otimes 2r-2(p+p')}}.\end{multline}
By definition of the Malliavin derivative and Leibniz rule, $D^{2r-2(p+p')}Y_jY_k$ consists of terms of the form
$D^aY_j\otimes D^bY_k$, 
where $a+b = 2r-2(p+p')$.  Without loss of generality, we may assume $b\ge 1$.  By assumptions on $\phi$ and the definition of the Malliavin derivative, we know that $D^b Y_k = \phi^{(b)}(\widetilde B_{k/n})\widetilde \varepsilon_{k/n}^{\otimes b}$, 
 and we know that for each $b \le 2r$, $D^bY_k \in L^2(\Omega; \hten^{\otimes b})$.
  It follows that we can write,
\begin{multline*}
\left| {\mathbb E}\left< D^aY_j\otimes D^b Y_k , \partial_{\frac jn}^{\otimes r-2p} \otimes \partial_{\frac kn}^{\otimes r-2p'}\right>_{\hten^{\otimes a+b}}\right|\\
\le C \|Y_j\|_{{\mathbb D}^{2r, 2}}\|Y_k\|_{{\mathbb D}^{2r, 2}} \left|\left<\widetilde \varepsilon_{\frac jn}, \partial_{\frac jn}\right>_\hten^\phi\right|~\left|\left<\widetilde \varepsilon_{\frac jn}, \partial_{\frac kn}\right>_\hten^{a-\phi}\right|\\ \times\left|\left<\widetilde \varepsilon_{\frac kn}, \partial_{\frac jn}\right>_\hten^\psi\right|~\left|\left<\widetilde \varepsilon_{\frac kn}, \partial_{\frac kn}\right>_\hten^{b-\psi}\right|,
\end{multline*}
for integers $0\le \phi\le a$, $0\le\psi\le b$.  Without loss of generality, we may assume $\psi \ge 1$, and by implication $b\ge 1$.  Then using (B.4), 
\begin{equation*}
\left| {\mathbb E}\left< D^aY_j D^b Y_k , \partial_{\frac jn}^{\otimes r-2p} \otimes \partial_{\frac kn}^{\otimes r-2p'}\right>_{\hten^{\otimes a+b}}\right|\le C\sup_{0\le j\le \Nt} \| Y_j\|^2_{{\mathbb D}^{2r,2}} n^{-2H(a+b-1)}\left|\left<\widetilde \varepsilon_{\frac kn}, \partial_{\frac jn}\right>_\hten\right|.
\end{equation*} 
Thus, for each pair $(a,b)$, the corresponding term of \req{delta_2r_2p} is bounded by
\begin{align*}
Cn^{-2H(p+p')}\sum_{j,k=0}^{\Nt -1}&\left|{\mathbb E}\left[Y_j Y_k\delta^{2r-2(p+p')}\left(\partial_{\frac jn}^{\otimes r-2p} \otimes \partial_{\frac kn}^{\otimes r-2p'}\right)\right]\right|\\
&\le Cn^{-2H(p+p'+a+b-1)} \sup_{0\le j\le \Nt} \| Y_j\|^2_{{\mathbb D}^{2r,2}} \sum_{j,k=0}^{\Nt -1}\left|\left<\widetilde \varepsilon_{\frac kn}, \partial_{\frac jn}\right>_\hten\right|\\
&\le  Cn^{-2H(p+p'+a+b-1)} \sup_{0\le j\le \Nt} \| Y_j\|^2_{{\mathbb D}^{2r,2}} \sum_{j,k=0}^{\Nt -1} \left|\left<\widetilde \varepsilon_{\frac kn}, \partial_{\frac jn}\right>_\hten\right|.
\end{align*}
By Lemma 2.6.a,
\[ \sum_{j=0}^{\Nt -1} \left|\left<\widetilde \varepsilon_{\frac kn}, \partial_{\frac jn}\right>_\hten\right| \le C\Nt^{2H}n^{-2H}\le C\]
for all $0\le k\le \Nt$, so that
\begin{multline*}  Cn^{-2H(p+p'+a+b-1)} \sup_{0\le j\le \Nt} \| Y_j\|^2_{{\mathbb D}^{2r,2}} \sum_{k=0}^{\Nt -1}\left\{\sup_{0\le k \le\Nt}\sum_{j=0}^{\Nt -1} \left|\left<\widetilde \varepsilon_{\frac kn}, \partial_{\frac jn}\right>_\hten\right|\right\}\\ \le C\sup_{0\le j\le \Nt} \| Y_j\|^2_{{\mathbb D}^{2r,2}}\Nt n^{-2H(p+p'+a+b-1)},\end{multline*}
where $p+p'+a+b-1 = 2r - (p+p')-1 \ge r$, since $p+p' +1 \le 2\left\lfloor \frac r2\right\rfloor +1 \le r$, for odd integer $r$.
This concludes the proof.

\subsection{Proof of Lemma 3.6.}
For $\theta \in\{ 0, 2\}$ define
\[ w_n(\theta) = n^{-\theta H}\sum_{j=0}^{\Nt - 1} f^{(5)}(\Btilde) \partial_{\frac jn}^{\otimes 5-\theta};\; \text{ and }\; \Phi_n(\theta) = \delta^{5-\theta}(w_n(\theta)).\]
This allows us to write $u_n = w_n(0)$, $F_n = \Phi_n(0)$, $v_n = 10w_n(2)$, and $G_n = 10\Phi_n(2)$.  Following Remark 2.4, we may assume that $h \in\hten^{\otimes 5-\theta}$ has the form $\varepsilon_{t_1} \otimes \cdots \otimes \varepsilon_{t_{5-\theta}}$, for some set of times $\{ t_1, \dots, t_{5-\theta}\}$ in $[0,T]^{5-\theta}$.  Then for (a), using (B.4) and Lemma 2.6.a,
\begin{align*}
{\mathbb E}\left| \left< w_n(\theta) , h \right>_{\hten^{\otimes 5-\theta}}\right| &= n^{-\theta H}{\mathbb E}\left| \sum_{j=0}^{\Nt -1} \left< f^{(5)}(\Btilde)\partial_{\frac jn}^{\otimes 5-\theta}, \varepsilon_{t_1} \otimes \cdots \otimes \varepsilon_{t_{5-\theta}}\right>_{\hten^{\otimes 5-\theta}}\right|\\
&\le n^{-\theta H} {\mathbb E}\left[ \sup_{s\in[0,t]}\left| f^{(5)}(B_s)\right|\right] \sum_{j=0}^{\Nt -1} \prod_{k=1}^{5-\theta} \left| \left< \partial_{\frac jn}, \varepsilon_{t_k}\right>_\hten \right|\\
%&\le Cn^{-8H + \theta H} \sum_{j=0}^{\Nt -1} \left|\left< \partial_{\frac jn}, \varepsilon_{t_1}\right>_\hten \right| \\
&\le Cn^{-(8-\theta)H} \le Cn^{-6H},\end{align*}
where the last inequality follows because $\theta \le 2$.

Next, for (b), consider integers $0 \le a_i < 5-\theta$, $0\le s\le r < 5-\theta$, $r \ge 1$ and $q$, such that $s \le r$, $1 \le a_1 + \cdots + a_r < 5-\theta$ and $q = 5-\theta - (a_1 + \cdots + a_r) \ge 1$.  We have
\begin{multline*}
{\mathbb E} \left| \left< w_n(\theta), \bigotimes_{i=1}^s D^{a_i}F_n \bigotimes_{i=s+1}^r  D^{a_i}G_n \otimes h\right>_{\hten^{\otimes 5-\theta}}\right|\\
\le n^{-\theta H} {\mathbb E} \sum_{j=0}^{\Nt -1} \left| f^{(5)} (\Btilde) \prod_{i=1}^s \left< \partial_{\frac jn}^{\otimes a_i}, D^{a_i}F_n\right>_{\hten^{\otimes a_i}}\left(\prod_{i=s+1}^r \left< \partial_{\frac jn}^{\otimes a_i}, D^{a_i}G_n\right>_{\hten^{\otimes a_i}}\right)\left< \partial_{\frac jn}^{\otimes q} , h\right>_{\hten^{\otimes q}}\right|.\end{multline*}
Using (B.1), Lemma 3.5, and Lemma 2.6.a, this is bounded by
\begin{multline*}
n^{-\theta H} \sup_{0\le j \le \Nt}\left\| f^{(5)}(\Btilde)\right\|_{L^p(\Omega)} \prod_{i=1}^r \sup_j \left\| \partial_{\frac jn}^{\otimes a_i}\right\|_{\hten^{\otimes a_i}} \prod_{i=1}^s \left\| D^{a_i} F_n\right\|_{L^p(\Omega; \hten^{\otimes a_i})}\\ \times \prod_{i=s+1}^r \left\| D^{a_i} G_n\right\|_{L^p(\Omega; \hten^{\otimes a_i})}  \sum_{j=0}^{\Nt -1} \left| \left< \partial_{\frac jn}^{\otimes q}, h \right>_{\hten^{\otimes q}}\right|
\le Cn^{-(3+q)H},\end{multline*}
where $p = r+1$.% and by (B.1) and Gaussian property, we have for all $i,j$, \[\left\| \partial_{\frac jn}^{\otimes a_i}\right\|_{L^p(\Omega; \hten^{\otimes a_i})} \le C\left\| \partial_{\frac 1n}\right\|_\hten^{a_i} \le Cn^{-a_i H}.\]
%Note that in the above estimate, by Lemma 2.6.a,
%\[ \sum_{j=0}^{Nt -1}\left|\left< \partial_{\frac jn}^{\otimes q}, h\right>_{\hten^{\otimes q}}\right| \le \max_{1\le\ell\le q} \sum_{j=0}^{\Nt -1} \left|\left< \partial_{\frac jn}, \varepsilon_{t_\ell}\right>_\hten^q\right| \le C\Nt^{2H} n^{-2qH}.\]

\medskip
%%%%%%%%%%%%%%%%%%%<u_n, DF x DF> etc.
For (c), we want to consider terms of the form
\[ {\mathbb E}\left| \left< w_n(\theta_0), \bigotimes_{i=1}^r D^{a_i} \Phi_n(\theta_i)\right>_{\hten^{\otimes 5-\theta_0}}\right|,\]
where $\theta_i \in \{0,2\}$, $2 \le r \le 5-\theta_0$, $0 \le a_i \le 4-\theta_0$, and $a_1 + \cdots + a_r = 5-\theta_0$.
For example, the term
\[ \left< u_n, D^3 F_n \otimes D^2 G_n \right>_{\hten^{\otimes 3}}\]
corresponds to the case $(\theta_0, \theta_1, \theta_2) = (0,0,2)$, $a_1=3$, $a_2=2$.  We will show that terms of this type tend to zero in $L^2(\Omega)$ as $n \to \infty$.  Using the above definitions for $w_n(\theta_i),~\Phi_n(\theta_i)$, we have
\begin{multline}\label{wDrexpand}
{\mathbb E}\left[\left< w_n(\theta_0), \bigotimes_{i=1}^r D^{a_i} \Phi_n(\theta_i)\right>_{\hten^{\otimes 5-\theta_0}}^2\right]\\
 = n^{-2H(\theta_0 + \cdots + \theta_r)}{\mathbb E}~\sum_{p,p'=0}^{\Nt -1}\sum_{j_1, \dots, j_r = 0}^{\Nt -1}\sum_{k_1, \dots, k_r = 0}^{\Nt -1}
\left< f^{(5)}(\widetilde B_{\frac{p}{n}})\partial_{\frac{p}{n}}^{\otimes 5-\theta_0} , \bigotimes_{i=1}^r D^{a_i}\delta^{5-\theta_i}\left( f^{(5)}(\widetilde B_{\frac{j_i}{n}})\partial_{\frac{j_i}{n}}^{\otimes 5-\theta_i}\right)\right>_{\hten^{\otimes 5-\theta_0}}\\
\times\left< f^{(5)}(\widetilde B_{\frac{p'}{n}})\partial_{\frac{p'}{n}}^{\otimes 5-\theta_0} , \bigotimes_{i=i}^r D^{a_1}\delta^{5-\theta_i}\left( f^{(5)}(\widetilde B_{\frac{k_i}{n}})\partial_{\frac{k_i}{n}}^{\otimes 5-\theta_i}\right)\right>_{\hten^{\otimes 5-\theta_0}}.
\end{multline}
By Lemma 2.1.b,
\begin{multline*} D^{a_i}\delta^{5-\theta_i}\left( f^{(5)}(\widetilde B_{\frac{j_i}{n}})\partial_{\frac{j_i}{n}}^{\otimes 5-\theta_i}\right)\\
= \sum_{\ell_i=0}^{(5-\theta_i)\wedge a_i} {\ell_i!}\binom{5-\theta_i}{\ell_i}\binom{a_i}{\ell_i} \delta^{5-\theta_i -\ell_i}\left(f^{(5+a_i-\ell_i)}(\widetilde B_{\frac{j_i}{n}})\partial_{\frac{j_i}{n}}^{\otimes 5-\theta_i -\ell_i}\right)\partial_{\frac{j_i}{n}}^{\otimes \ell_i}\otimes \widetilde \varepsilon_{\frac{j_i}{n}}^{\otimes a_i - \ell_i}.\end{multline*}
Applying this to each term, we can expand the inner product
\[\left< f^{(5)}(\widetilde B_{\frac pn})\partial_{\frac pn}^{\otimes 5-\theta_0} , D^{a_1} \delta^{5-\theta_1}\left( f^{(5)}(\widetilde B_{\frac{j_1}{n}})\partial_{\frac{j_1}{n}}^{\otimes 5-\theta_1}\right)\otimes\cdots\otimes D^{a_r}\delta^{5-\theta_r}\left( f^{(5)}(\widetilde B_{\frac{j_r}{n}})\partial_{\frac{j_r}{n}}^{\otimes 5-\theta_r}\right)\right>_{\hten^{\otimes 5-\theta_0}}\]
into terms of the form
\begin{multline*}
C_\ell f^{(5)}(\widetilde B_{\frac pn}) \delta^{b_1}\left( f^{(\lambda_1)}(\widetilde B_{\frac{j_1}{n}})\partial_{\frac{j_1}{n}}^{\otimes b_1}\right) \cdots\delta^{b_r}\left( f^{(\lambda_r)}(\widetilde B_{\frac{j_r}{n}})\partial_{\frac{j_r}{n}}^{\otimes b_r}\right)\\
\times \left< \partial_{\frac pn}, \partial_{\frac{j_1}{n}}\right>_\hten^{\ell_1}\left< \partial_{\frac pn}, \widetilde \varepsilon_{\frac{j_1}{n}}\right>_\hten^{a_1 - \ell_1}\cdots \left< \partial_{\frac pn}, \partial_{\frac{j_r}{n}}\right>_\hten^{\ell_r}\left< \partial_{\frac pn}, \widetilde \varepsilon_{\frac{j_r}{n}}\right>_\hten^{a_r - \ell_r},
\end{multline*}
where $C_\ell = C_\ell (\ell_1, \dots, \ell_r)$ is an integer constant, each $b_i = 5-\theta_i - \ell_i$, and each $\lambda_i = 5+a_i - \ell_i$.
It follows that \req{wDrexpand} is a sum of terms of the form
\begin{multline}\label{wDrCC}
C_\ell C_{\ell'} n^{-2H(\theta_1 + \cdots + \theta_r)} {\mathbb E} \sum_{p,p'=0}^{\Nt -1} f^{(5)}(\widetilde B_{\frac{p}{n}})f^{(5)}(\widetilde B_{\frac{p'}{n}})\\
\times \left( \sum_{j_1 = 0}^{\Nt -1} \delta^{b_1}\left( f^{(\lambda_1)}(\widetilde B_{\frac{j_1}{n}}) \partial_{\frac{j_1}{n}}^{\otimes b_1}\right)\left< \partial_{\frac pn}, \partial_{\frac{j_1}{n}}\right>_\hten^{\ell_1}\left< \partial_{\frac pn}, \widetilde \varepsilon_{\frac{j_1}{n}}\right>_\hten^{a_1 - \ell_1}\right)\\
\times \cdots \times\left( \sum_{k_r = 0}^{\Nt -1} \delta^{b_r'}\left( f^{(\lambda_r')}(\widetilde B_{\frac{k_r}{n}}) \partial_{\frac{k_r}{n}}^{\otimes b_r'}\right)\left< \partial_{\frac{p'}{n}}, \partial_{\frac{k_r}{n}}\right>_\hten^{\ell_r'}\left< \partial_{\frac{p'}{n}}, \widetilde \varepsilon_{\frac{k_r}{n}}\right>_\hten^{a_r - \ell_r'}\right).\end{multline}
For $0\le j_1, \dots, j_r \le \Nt$ we have the estimate
\begin{multline*}
\sum_{p=0}^{\Nt -1} \left| \left< \partial_{\frac pn}, \partial_{\frac{j_1}{n}}\right>_\hten^{\ell_1}\left< \partial_{\frac pn}, \widetilde \varepsilon_{\frac{j_1}{n}}\right>_\hten^{a_1 - \ell_1}\cdots \left< \partial_{\frac pn}, \partial_{\frac{j_r}{n}}\right>_\hten^{\ell_r}\left< \partial_{\frac pn}, \widetilde \varepsilon_{\frac{j_r}{n}}\right>_\hten^{a_r - \ell_r}\right|\\
\le \sup_{\cal I} \sum_{p=0}^{\Nt -1} \prod_{i=1}^r\left|\left< \partial_{\frac pn}, \partial_{\frac{j_i}{n}}\right>_\hten^{\ell_i}\left<\partial_{\frac pn}, \widetilde \varepsilon_{\frac{j_i}{n}}\right>_\hten^{a_i-\ell_i}\right|,\end{multline*}
where ${\cal I} = \{ 0 \le j_1, \dots, j_r\le \Nt\}$.  By Lemma 2.6.a and/or 2.6.c, this is bounded by $Cn^{-2H(5-\theta_0)}$ if $\ell_1 + \cdots + \ell_r \ge 1$, and bounded by $Cn^{-2H(5-\theta_0 -1)} = Cn^{-2H(4-\theta_0)}$ if and only if $\ell_1 = \cdots = \ell_r =0$.  Hence, we can write
\beq{wDrpp} \sup_{\cal I, I'} \sum_{p,p'=0}^{\Nt -1} \left| \left< \partial_{\frac pn}, \partial_{\frac{j_1}{n}}\right>_\hten^{\ell_1}\cdots \left<\partial_{\frac{p'}{n}}, \widetilde \varepsilon_{\frac{k_r}{n}}\right>_\hten^{a_r-\ell_r'}\right|
\le Cn^{-\Lambda H},\eeq
where $4H(4-\theta_0) \le \Lambda \le 4H(5-\theta_0)$.

It follows that terms of the form \req{wDrCC} can be bounded in absolute value by
\begin{multline*}
Cn^{-2H(\theta_0 +\cdots +\theta_r)} \sup_{0\le p \le\Nt} \| f^{(5)}(\widetilde B_{\frac pn})\|^2_{L^{4r+2}(\Omega)}
\sup_{\cal I, I'} \sum_{p,p'=0}^{\Nt -1} \left| \left< \partial_{\frac pn}, \partial_{\frac{j_1}{n}}\right>_\hten^{\ell_1}\cdots \left<\partial_{\frac{p'}{n}}, \widetilde \varepsilon_{\frac{k_r}{n}}\right>_\hten^{a_r-\ell_r'}\right|\\
\times \prod_{i=1}^r \left\| \sum_{j_i=0}^{\Nt -1} \delta^{b_i}\left( f^{(\lambda_i)}(\widetilde B_{\frac{j_i}{n}})\partial_{\frac{j_i}{n}}^{\otimes b_i}\right)\right\|_{L^{2r+1}(\Omega)}\left\| \sum_{k_i=0}^{\Nt -1} \delta^{b_i'}\left( f^{(\lambda_i')}(\widetilde B_{\frac{k_i}{n}})\partial_{\frac{k_i}{n}}^{\otimes b_i'}\right)\right\|_{L^{2r+1}(\Omega)}.
\end{multline*}
By \req{wDrpp} and Lemma 3.5, this is bounded by
\[ C\Nt^r n^{-2H(\theta_0 +\cdots +\theta_r) - \Lambda H - H(b_1 +\cdots + b_r + b_1'+\cdots + b_r')}.\]
We have $\Lambda \ge 4H(4-\theta_0)$, and 
\[b_1 + \cdots + b_r = 5r - (\theta_1 +\cdots + \theta_r) - (\ell_1 + \cdots + \ell_r).\]
Since $\ell_i \le a_i$ for each $i$, then $\ell_1 + \cdots +\ell_r \le a_1 + \cdots + a_r = 5-\theta_0$, it follows that the exponent
\begin{multline*} 2H(\theta_0 + \cdots + \theta_r) + \Lambda H + H(b_1 + \cdots + b_r + b_1' + \cdots + b_r')\\
\ge 2H(\theta_0 + \cdots + \theta_r) + 4H(4-\theta_0) + H(10r -2(\theta_1 +\cdots +\theta_r) - 2(5-\theta_0))\\ \ge 16H + 10(r-1)H \ge 10rH + 6H.\end{multline*}
Hence, we have an upper bound of
\[ C\Nt^r n^{-10rH-6H} \le Cn^{-6H}\]
for each term of the form \req{wDrCC}, so this term tends to zero in $L^2(\Omega)$, and we have (c).
This concludes the proof of Lemma 3.6. 
$\square$

\subsection{Proof of Lemma 3.7.}
Starting with (a), Lemma 2.1.b gives
\begin{align*}
{\mathbb E} \left| \left< u_n, D^5 G_n \right>_{\hten^{\otimes 5}}\right| &=
n^{-2H}{\mathbb E}\left| \sum_{i=0}^3 \binom 5i \binom 3i i! \sum_{j,k=0}^{\Nt -1} \left< f^{(5)}(\Btilde)\partial_{\frac jn}^{\otimes 5}, \delta^{3-i}\left( f^{(10-i)}(\widetilde B_{\frac kn})\partial_{\frac kn}^{\otimes 3-i}\right)\partial_{\frac kn}^{\otimes i}\otimes \widetilde \varepsilon_{\frac kn}^{\otimes 5-i}\right>_{\hten^{\otimes 5}}\right|\\
&\le Cn^{-2H}\sum_{i=0}^3 \sup_{0\le j \le \Nt} \left\| f^{(5)}(\Btilde)\right\|_{L^2(\Omega)} \sup_{0\le k \le \Nt}\left\| \delta^{3-i}\left( f^{(10-i)}(\widetilde B_{\frac kn})\partial_{\frac kn}^{\otimes 3-i}\right)\right\|_{L^2(\Omega)}\\
&\qquad \times \sum_{j,k=0}^{\Nt -1} \left| \left< \partial_{\frac jn}, \partial_{\frac kn}\right>_\hten^i \left< \partial_{\frac jn}, \widetilde \varepsilon_{\frac kn}\right>_\hten^{5-i}\right|.
\end{align*}
By moderate growth conditions and \req{Meyer}, we have
$\left\| f^{(5)}(\Btilde)\right\|_{L^2(\Omega)} \le C$ and $\left\| \delta^{3-i}\left( f^{(10-i)}(\widetilde B_{\frac kn})\partial_{\frac kn}^{\otimes 3-i}\right)\right\|_{L^2(\Omega)} \le C\|\partial_{\frac 1n}\|_\hten^{3-i} = Cn^{-(3-i)H}$; so we have terms of the form
\[ Cn^{-(5-i)H} \sum_{j,k=0}^{\Nt -1} \left| \left< \partial_{\frac jn}, \partial_{\frac kn}\right>_\hten^i \left< \partial_{\frac jn}, \widetilde \varepsilon_{\frac kn}\right>_\hten^{5-i}\right|.\]
If $i > 0$, then (B.4) and Lemma 2.6.c give an estimate of
\begin{multline*}
Cn^{-(5-i)H} \sum_{j,k=0}^{\Nt -1} \left| \left< \partial_{\frac jn}, \partial_{\frac kn}\right>_\hten^i \left< \partial_{\frac jn}, \widetilde \varepsilon_{\frac kn}\right>_\hten^{5-i}\right| \\
\le Cn^{-(15-3i)H} \sum_{j,k=0}^{\Nt -1} \left| \left< \partial_{\frac jn}, \partial_{\frac kn}\right>_\hten^i\right| \le C\Nt n^{-(15-3i)H} \le Cn^{-2H},\end{multline*}
because $i \le 3$.  On the other hand, if $i =0$, then by (B.4) and Lemma 2.6.a,
	\begin{multline*} Cn^{-5H} \sum_{j,k=0}^{\Nt -1} \left| \left< \partial_{\frac jn}, \widetilde \varepsilon_{\frac kn}\right>_\hten^5\right| \le Cn^{-5H}\sum_{k=0}^{\Nt -1} \left\{\sup_{0\le k \le \Nt}\sum_{j=0}^{\Nt -1}\left| \left< \partial_{\frac jn}, \widetilde \varepsilon_{\frac kn}\right>_\hten^5\right|\right\}\\ \le C\Nt n^{-13H} \le Cn^{-3H},\end{multline*}
hence (a) is proved.

For (b), again using Lemma 2.1.b we can write
\begin{align*}
{\mathbb E} \left| \left< v_n, D^3 F_n \right>_{\hten^{\otimes 3}}\right| &=
n^{-2H}{\mathbb E}\left| \sum_{i=0}^3 \binom 5i \binom 3i i! \sum_{j,k=0}^{\Nt -1} \left< f^{(5)}(\Btilde)\partial_{\frac jn}^{\otimes 3}, \delta^{5-i}\left( f^{(8-i)}(\widetilde B_{\frac kn})\partial_{\frac kn}^{\otimes 5-i}\right)\partial_{\frac kn}^{\otimes i}\otimes \widetilde \varepsilon_{\frac kn}^{\otimes 3-i}\right>_{\hten^{\otimes 3}}\right|\\
&\le Cn^{-2H}\sum_{i=0}^3 {\mathbb E}\left|\sum_{j,k=0}^{\Nt -1} f^{(5)}(\Btilde)\delta^{5-i}\left( f^{(8-i)}(\widetilde B_{\frac kn})\partial_{\frac kn}^{\otimes 5-i}\right)\left<\partial_{\frac jn}, \partial_{\frac kn}\right>^i_\hten \left< \partial_{\frac jn},  \widetilde \varepsilon_{\frac kn}\right>_\hten^{3-i}\right|.
\end{align*}
We deal with three cases.  First, assume $i=0$.  Then we have a bound of
\begin{multline*}
Cn^{-2H} \sum_{j,k=0}^{\Nt -1} {\mathbb E}\left| f^{(5)}(\Btilde)\delta^{5}\left( f^{(8)}(\widetilde B_{\frac kn})\partial_{\frac kn}^{\otimes 5}\right)\right|~\left| \left<\partial_{\frac jn}, \widetilde \varepsilon_{\frac kn}\right>^3_\hten\right|\\
\le Cn^{-2H}\sup_{0\le j \le \Nt} \left\| f^{(5)}(\Btilde)\right\|_{L^2(\Omega)}\sup_{0\le k \le \Nt}\left\|\delta^{5}\left( f^{(8)}(\widetilde B_{\frac kn})\partial_{\frac kn}^{\otimes 5}\right)\right\|_{L^2(\Omega)} \sup_{j,k} \left|\left< \partial_{\frac jn}, \widetilde \varepsilon_{\frac kn}\right>^2_\hten\right|\\
\times \sum_{k=0}^{\Nt -1}\left\{\sup_{0\le k \le \Nt} \sum_{j=0}^{\Nt -1} \left|\left< \partial_{\frac jn}, \widetilde \varepsilon_{\frac kn}\right>_\hten\right|\right\} \le C\Nt n^{-11H} \le Cn^{-H},
\end{multline*}
where, as above, we use the estimates $\left\| f^{(5)}(\Btilde)\right\|_{L^2(\Omega)} \le C$ and $\left\| \delta^{5}\left( f^{(8)}(\widetilde B_{\frac kn})\partial_{\frac kn}^{\otimes 5}\right)\right\|_{L^2(\Omega)} \le Cn^{-5H}$; and 
\[ \sup_{j,k}\left|\left< \partial_{\frac jn}, \widetilde \varepsilon_{\frac kn}\right>^2_\hten\right|
\sum_{k=0}^{\Nt -1} \sum_{j=0}^{\Nt -1} \left|\left< \partial_{\frac jn}, \widetilde \varepsilon_{\frac kn}\right>_\hten\right| \le C\Nt n^{-4H}\] follows from (B.4) and Lemma 2.6.a.

The next case is for $i = 1$ or $i=2$.  Using similar estimates we have
\begin{multline*} 
Cn^{-2H} \sum_{j,k=0}^{\Nt -1} {\mathbb E}\left|f^{(5)}(\Btilde)\delta^{5-i}\left( f^{(8-i)}(\widetilde B_{\frac kn})\partial_{\frac kn}^{\otimes 5-i}\right)\right|~\left|\left<\partial_{\frac jn}, \partial_{\frac kn}\right>^i_\hten \left< \partial_{\frac jn},  \widetilde \varepsilon_{\frac kn}\right>_\hten^{3-i}\right|\\
\le Cn^{-2H}\sup_{0\le j \le \Nt} \left\| f^{(5)}(\Btilde)\right\|_{L^2(\Omega)}\sup_{0\le k \le \Nt}\left\|\delta^{5-i}\left( f^{(8-i)}(\widetilde B_{\frac kn})\partial_{\frac kn}^{\otimes 5-i}\right)\right|_{L^2(\Omega)} \sup_{j,k} \left|\left< \partial_{\frac jn}, \widetilde \varepsilon_{\frac kn}\right>^{3-i}_\hten\right|\\
\times \sum_{k=0}^{\Nt -1} \sum_{j=0}^{\Nt -1} \left|\left< \partial_{\frac jn}, \partial_{\frac kn}\right>_\hten^i\right| \le C\Nt n^{-(7-i+6)H} \le Cn^{-H},
\end{multline*}
because $7-i+6 \ge 11$ for $i \le 2$.

For the case $i=3$, we will use a different estimate, and show that the term with $i=3$ vanishes in $L^2(\Omega)$.  Using Lemma 2.1.d we have,
\begin{align*}
&{\mathbb E}\left[\left( n^{-2H} \sum_{j,k=0}^{\Nt -1} f^{(5)}(\Btilde)\delta^2\left( f^{(5)}(\widetilde B_{\frac kn})\partial_{\frac kn}^{\otimes 2}\right)\left< \partial_{\frac jn}, \partial_{\frac kn}\right>_\hten^3\right)^2\right]\\
&\quad = n^{-4H}\sum_{j,j',k,k'=0}^{\Nt -1}{\mathbb E}\left[f^{(5)}(\Btilde)f^{(5)}(\widetilde B_{\frac{j'}{n}})\delta^2\left( f^{(5)}(\widetilde B_{\frac kn})\partial_{\frac kn}^{\otimes 2}\right)\delta^2\left( f^{(5)}(\widetilde B_{\frac{k'}{n}})\partial_{\frac{k'}{n}}^{\otimes 2}\right)\left< \partial_{\frac jn}, \partial_{\frac kn}\right>_\hten^3\left< \partial_{\frac{j'}{n}}, \partial_{\frac{k'}{n}}\right>_\hten^3\right]\\
&\quad = n^{-4H}\sum_{p=0}^2 {\binom 2p}^2p! \sum_{j,j',k,k'} {\mathbb E}\left[ g(j,j') \delta^{4-2p}\left(g(k,k')\partial_{\frac kn}^{\otimes 2-p} \otimes \partial_{\frac{k'}{n}}^{\otimes 2-p}\right)\right]\left< \partial_{\frac kn}, \partial_{\frac{k'}{n}}\right>_\hten^p\left< \partial_{\frac jn}, \partial_{\frac kn}\right>_\hten^{3}\left< \partial_{\frac{j'}{n}}, \partial_{\frac{k'}{n}}\right>_\hten^{3},
\end{align*}
where $g(j,j') = f^{(5)}(\Btilde)f^{(5)}(\widetilde B_{\frac{j'}{n}})$.  Then by the Malliavin duality \req{duality}, this results in a sum of three terms of the form
\beq{p_cases} Cn^{-4H}\sum_{j,j',k,k'} {\mathbb E}\left[ \left< D^{4-2p}g(j,j'), g(k,k')\partial_{\frac kn}^{\otimes 2-p} \otimes \partial_{\frac{k'}{n}}^{\otimes 2-p}\right>_{\hten^{\otimes 4-2p}}\right]\left< \partial_{\frac kn}, \partial_{\frac{k'}{n}}\right>_\hten^p\left< \partial_{\frac jn}, \partial_{\frac kn}\right>_\hten^{3}\left< \partial_{\frac{j'}{n}}, \partial_{\frac{k'}{n}}\right>_\hten^{3},\eeq
for $p = 0,1,2$.  When the index $p=0$, then ${\mathbb E}\left|\left< D^{4-2p}g(j,j'), g(k,k')\partial_{\frac kn}^{\otimes 2-p} \otimes \partial_{\frac{k'}{n}}^{\otimes 2-p}\right>_{\hten^{\otimes 4-2p}}\right|$ consists of terms of the form
\beq{p_zero_Dgg} {\mathbb E}\left| \left(\frac{\partial^4}{\partial x_1^a \partial x_2^b} \Psi(\Btilde, \widetilde B_{\frac{j'}{n}})\right)g(k,k') \left<\widetilde \varepsilon_{\frac jn}, \partial_{\frac kn}\right>_\hten^a\left<\widetilde \varepsilon_{\frac{j'}{n}}, \partial_{\frac kn}\right>_\hten^{2-a} \left<\widetilde \varepsilon_{\frac jn}, \partial_{\frac{k'}{n}}\right>_\hten^b\left<\widetilde \varepsilon_{\frac{j'}{n}}, \partial_{\frac{k'}{n}}\right>_\hten^{2-b}\right|,\eeq
where $\Psi(x_1, x_2) = f^{(5)}(x_1)f^{(5)}(x_2)$ and $a+b = 4$.  By moderate growth and (B.4), we see that \req{p_zero_Dgg} is bounded by $Cn^{-8H}$, and so for the case $p=0$, \req{p_cases} is bounded in absolute value by
\begin{align*}
Cn^{-12H} \sum_{j,j',k,k'}\left| \left< \partial_{\frac jn}, \partial_{\frac kn}\right>_\hten^{3}\left< \partial_{\frac{j'}{n}}, \partial_{\frac{k'}{n}}\right>_\hten^{3}\right| &= Cn^{-12H}\left( \sum_{j,k=0}^{\Nt -1} \left|\left< \partial_{\frac jn}, \partial_{\frac kn}\right>_\hten^{3}\right|\right)^2\\
&\le C\Nt^2 n^{-24H} \le Cn^{-4H}.
\end{align*}

By a similar estimate, when $p=1$, then 
\[{\mathbb E}\left|\left< D^2 g(j,j') , g(k,k') \partial_{\frac kn}\otimes\partial_{\frac{k'}{n}}\right>_{\hten^{\otimes 2}}\right| \le Cn^{-4H},\]
so that for $p=1$, then \req{p_cases} is bounded in absolute value by
\begin{multline*}
Cn^{-8H} \sum_{j,j',k,k'} \left| \left<\partial_{\frac kn}, \partial_{\frac{k'}{n}}\right>_\hten\left< \partial_{\frac jn}, \partial_{\frac kn}\right>_\hten^{3}\left< \partial_{\frac{j'}{n}}, \partial_{\frac{k'}{n}}\right>_\hten^{3}\right|\\
\le Cn^{-8H} \sup_{k,k'}\left|\left<\partial_{\frac kn}, \partial_{\frac{k'}{n}}\right>_\hten\right|\left( \sum_{j,k=0}^{\Nt -1} \left|\left< \partial_{\frac jn}, \partial_{\frac kn}\right>_\hten^{3}\right|\right)^2
\le C\Nt^2 n^{-22H} \le Cn^{-2H}.\end{multline*}

Last, the term in \req{p_cases} with $p=2$ has the form
\[Cn^{-4H}\sum_{j,j',k,k'} {\mathbb E}\left[ g(j,j') g(k,k')\right]\left<\partial_{\frac kn}, \partial_{\frac{k'}{n}}\right>_\hten^2\left< \partial_{\frac jn}, \partial_{\frac kn}\right>_\hten^{3}\left< \partial_{\frac{j'}{n}}, \partial_{\frac{k'}{n}}\right>_\hten^{3}.\]
This is bounded in absolute value by
\beq{p_cases_2_b}
Cn^{-4H}\sup_{0\le j \le \Nt} \left\| f^{(5)}(\Btilde)\right\|^4_{L^4(\Omega)} \sum_{k,k'=0}^{\Nt -1}\left<\partial_{\frac kn}, \partial_{\frac{k'}{n}}\right>_\hten^2 \sum_{j=0}^{\Nt -1} \left|\left< \partial_{\frac jn}, \partial_{\frac kn}\right>_\hten^{3}\right|\sum_{j'=0}^{\Nt -1} \left|\left< \partial_{\frac{j'}{n}}, \partial_{\frac{k'}{n}}\right>_\hten^{3}\right|.\eeq
By Lemma 2.6.c, for every $0\le k \le \Nt$ we have
\[ \sum_{j=0}^{\Nt -1} \left|\left< \partial_{\frac jn}, \partial_{\frac kn}\right>_\hten^{3}\right| \le Cn^{-6H},\]
hence \req{p_cases_2_b} is bounded by
\[Cn^{-16H} \sum_{k,k'=0}^{\Nt -1}\left<\partial_{\frac kn}, \partial_{\frac{k'}{n}}\right>_\hten^2 \le C \Nt n^{-20H}\le Cn^{-10H}.\]
Lemma 3.7 is proved. $\square$

\end{document}